\def\arXiv{1}  
\newcommand{\arxiv}[1]{#1}%
\newcommand{\notarxiv}[1]{\ignorespaces}%
\newcommand{\arxiv}[1]{\ignorespaces}%
\newcommand{\notarxiv}[1]{#1}%
\newcommand{\icml}[1]{#1}%
\newcommand{\noticml}[1]{\ignorespaces}%
\newcommand{\icml}[1]{\ignorespaces}%
\newcommand{\noticml}[1]{#1}%
\newcommand{\neurips}[1]{#1}%
\newcommand{\notneurips}[1]{\ignorespaces}%
\newcommand{\neurips}[1]{\ignorespaces}%
\newcommand{\notneurips}[1]{#1}%
	\newenvironment{algorithmic}[1][]{}{}
\theoremstyle{plain}
\newtheorem{lemma}{Lemma}
\newtheorem*{claim*}{Claim}
\theoremstyle{definition}
\newtheorem{definition}{Definition}
\newtheorem{assumption}{Assumption}
\newtheoremstyle{remark}{0.5\topsep}{0.5\topsep}{}{}{\bf}{.}{5pt plus 1pt minus 1pt}{}
\theoremstyle{remark}
\DeclarePairedDelimiter{\crl}{\{}{\}}
\DeclarePairedDelimiter{\prn}{(}{)}
\DeclarePairedDelimiter{\norm}{\|}{\|}
\DeclarePairedDelimiterXPP{\onenorm}[1]{}{\|}{\|}{_{1}}{#1}
\DeclarePairedDelimiterXPP{\twonorm}[1]{}{\|}{\|}{_{2}}{#1}
\DeclarePairedDelimiterXPP{\infnorm}[1]{}{\|}{\|}{_{\infty}}{#1}
\DeclarePairedDelimiterXPP{\pnorm}[1]{}{\|}{\|}{_{p}}{#1}
\DeclarePairedDelimiterXPP{\qnorm}[1]{}{\|}{\|}{_{q}}{#1}
\DeclarePairedDelimiterXPP{\opnorm}[1]{}{\|}{\|}{_{\mathrm{op}}}{#1}
\DeclarePairedDelimiterXPP{\dualnorm}[1]{}{\|}{\|}{_{*}}{#1}
\DeclarePairedDelimiterXPP{\gennorm}[2]{}{\|}{\|}{_{#1}}{#2}
\DeclarePairedDelimiterXPP{\inner}[2]{}{\langle}{\rangle}{}{#1,#2}
\renewcommand{\P}{\mathbb{P}} %
\DeclarePairedDelimiterXPP{\Pr}[1]{\P}{(}{)}{}{\activatebar#1}
\newcommand{\E}{\mathbb{E}} %
\DeclarePairedDelimiterXPP{\Ex}[1]{\E}{[}{]}{}{\activatebar#1}
\newcommand{\activatebar}{%
	\begingroup\lccode`~=`|
	\lowercase{\endgroup\def~}{\;\delimsize\vert\;}%
	\mathcode`|=\string"8000
}
\DeclarePairedDelimiterXPP{\O}[1]{O}{(}{)}{}{#1}
\DeclarePairedDelimiterXPP{\Otil}[1]{\widetilde{O}}{(}{)}{}{#1}
\DeclarePairedDelimiterXPP{\OMEGA}[1]{\Omega}{(}{)}{}{#1}
\DeclarePairedDelimiterXPP{\OMEGAtil}[1]{\widetilde{\Omega}}{(}{)}{}{#1}
\DeclarePairedDelimiterXPP{\THETA}[1]{\Theta}{(}{)}{}{#1}
\DeclarePairedDelimiterXPP{\THETAtil}[1]{\widetilde{\Theta}}{(}{)}{}{#1}
\newcommand\numberthis{\addtocounter{equation}{1}\tag{\theequation}}
\newcommand{\overge}[1]{\overset{#1}{\ge}}
\newcommand{\mc}[1]{\mathcal{#1}}
\newcommand{\R}{\mathbb{R}}
\newcommand{\ball}{\mathbb{B}}
\newcommand{\xset}{\mathcal{X}}
\newcommand{\Var}{\mathrm{Var}}
\providecommand{\opt}{^\star}
\providecommand{\minimize}{\mathop{\rm minimize}}
\newcommand{\defeq}{\coloneqq}
\newcommand{\grad}{\nabla}
\newcommand{\hess}{\nabla^2}
	\newcommand{\LinesNumbered}{}
	\newcommand{\DontPrintSemicolon}{}
	\newcommand{\Input}[1]{\INPUT #1}
	\newcommand{\Parameters}[1]{\item[\textbf{parameters}] #1}
	\newcommand{\For}[2]{\FOR{#1} #2 \ENDFOR}
	\newcommand{\lIf}[2]{\SHORTIF{#1}{#2}}
	\newcommand{\Return}[0]{\STATE \textbf{return}\ }
	\NewDocumentCommand{\Comment}{som}{%
  		\IfBooleanTF#1
    	{\IfNoValueTF{#2}{\ \COMMENT{#3}}{\ \COMMENT{#3}}}
    	{\IfNoValueTF{#2}{\STATE \COMMENT{#3}}{\STATE \COMMENT{#3}}}%
}
\newcommand\Block[2]{%
	#1%
	\algocf@group{#2}%
}
\crefname{assumption}{assumption}{assumptions}
\crefname{algorithm}{algorithm}{algorithms}
\newcommand{\yairside}[1]{\todo[color=Aquamarine!20!white]{Yair: #1}}
\newcommand{\yair}[1]{{\bf \color{Aquamarine!80!black} Yair: #1}}
\newcommand{\ofir}[1]{{\bf \color{orange!85!black} Ofir: #1}}
\newcommand{\ofirtodo}[1]{{\color{purple} todo: #1}}
\newcommand{\lzo}{{$(L_0,L_1)$}\xspace}
\newcommand{\lzoTitle}{\texorpdfstring{$(L_0,L_1)$}{(L\_0,L\_1)}}
\newcommand{\clip}{c}
\newcommand{\dbl}[1]{#1^c}
\newcommand{\oltsigma}{\sqrt{\log \prn*{\frac{T}{\delta}}} \sigma}
\newcommand{\ltsigma}{3 \oltsigma}
\renewcommand{\yairside}[1]{\ignorespaces}
\renewcommand{\yair}[1]{\ignorespaces}
\renewcommand{\ofir}[1]{\ignorespaces}
\renewcommand{\ofirtodo}[1]{\ignorespaces} 
\title{Convergence of Clipped SGD on Convex \lzoTitle-Smooth Functions} 
\renewcommand{\And}{~~}
\author{%
	Ofir Gaash\thanks{Tel Aviv University, \texttt{ofirgaash@mail.tau.ac.il} and \texttt{ycarmon@gmail.com}.}
	\And
	Kfir Yehuda Levy\thanks{Technion, \texttt{kfiryehud@gmail.com}.}
    \And
    Yair Carmon\footnotemark[1] 
}
\begin{document}

\noticml{\maketitle}
\icml{
    \twocolumn[
    \icmltitle{
    Convergence of Clipped SGD on Convex \lzoTitle-Smooth Functions
    }
    
    \begin{icmlauthorlist}
        \icmlauthor{Ofir Gaash}{tau}
        \icmlauthor{Kfir Y. Levy}{technion}
        \icmlauthor{Yair Carmon}{tau}
    \end{icmlauthorlist}

    \icmlaffiliation{tau}{Department of Computer Science, Tel Aviv University, Israel}
    \icmlaffiliation{technion}{Department of Electrical and Computer Engineering, Technion, Israel}

    \icmlcorrespondingauthor{Ofir Gaash}{ofirgaash@mail.tau.ac.il}
    \vskip 0.3in
    ]
    \printAffiliationsAndNotice{}
}

\begin{abstract}
    We study stochastic gradient descent (SGD) with gradient clipping on convex functions under a generalized smoothness assumption called \lzo-smoothness. Using gradient clipping, we establish a high probability convergence rate that matches the SGD rate in the $L$ smooth case up to polylogarithmic factors and additive terms. We also propose a variation of adaptive SGD with gradient clipping, which achieves the same guarantee. We perform empirical experiments to examine our theory and algorithmic choices.
\end{abstract}

\section{Introduction}

Gradient clipping is a common method for stabilizing neural network training. Despite its wide use, little thought is given to the choice of the clipping \textit{threshold}, with many works fixing it at 1 without attempting to tune it \citep{brown2020language,chowdhery2023palm,touvron2023llama_i,touvron2023llama_ii,bi2024deepseek_i,liu2024deepseek_ii,liu2024deepseek_iii}. In pursuit of a theory-driven threshold choice, recent research tries to better understand the benefits of gradient clipping.

Experiments suggest that clipping is effective in situations where small changes in input can lead to significant variations in gradient norms \citep{zhang2019why,zhang2020improved}. This observation has led \citet{zhang2019why} to formally define this behavior as the following ``relaxed'' smoothness property.
\begin{definition}
    A twice-differentiable function $f : \R^d \to \R$ is \lzo-smooth if for every $x \in \R^d$ it holds that $\norm{\hess f(x)} \leq L_0 + L_1 \norm{\grad f(x)}$.
\label{def:lzo_smoothness}
\end{definition}
\noindent 
In words, \lzo-smoothness allows the Hessian norm to increase linearly in the gradient norm. This is opposed to the traditional smoothness property, which says that the Hessian norm is bounded by a constant, coinciding with the new definition for $L_1=0$. Building on this definition, \citet{zhang2019why} show instances where clipped SGD outperforms standard SGD. Specifically, they denote $M = \sup \crl*{\norm{\grad f(x)} \; | \; f(x) < f(x_0)}$, and show that the complexity of SGD with a fixed stepsize is larger than the complexity of SGD with gradient clipping by a factor of $L_1 M$ (assuming both algorithms are initialized at $x_0$). This factor can be very large: in particular, it may be exponential in $L_1 R_0$, where $R_0$ is the initial distance from the optimum (see \Cref{app:smooth_example}).

The connection between \lzo-smoothness and gradient clipping has led researchers to attempt to characterize the complexity of \lzo-smooth optimization, mainly in terms of the dependence on $L_1$. The non-convex setting is well understood: the state-of-the-art rate of gradient norm convergence comprises of the rate for $L_0$-smooth functions and additional low-order terms that depend on $L_1$. This holds for both clipped GD \citep{vankov2024optimizing} and clipped SGD \citep{tyurin2024toward} (see \Cref{sec:related_work} for details). 

\paragraph{Motivation for studying the convex setting.} 
Despite being originally motivated by the behavior of (non-convex) neural networks, generalized smoothness is also compelling to study in the convex setting. There are several instances where convex analysis closely aligns with empirical behavior; among them are AdaGrad-based algorithms and the momentum technique, in which theory preceded practice \citep{polyak1964some,duchi2011adaptive,schaipp2025surprising}. Furthermore, a body of work show that neural networks have convex-like behavior \citep{kleinberg2018alternative,zhou2019sgd,liu2023aiming}. From a theoretical perspective, it allows to examine whether the pattern observed for gradient norm convergence---an $L_0$-smooth rate with low-order $L_1$-dependent terms---also holds for \textit{optimality gap} convergence. 

These considerations have motivated recent work to study the convex, \lzo-smooth setting \citep{koloskova2023revisiting,li2024convex,gorbunov2024methods,vankov2024optimizing,tyurin2024toward,lobanov2024linear}. For clipped GD, \citep{gorbunov2024methods,vankov2024optimizing,tyurin2024toward,lobanov2024linear} prove an optimality gap convergence rate following the aforementioned pattern. However, prior work does not provide a corresponding result in the stochastic setting (we discuss a concurrent work by \citet{lobanov2025power} in \Cref{sec:related_work}). The difficulty of the stochastic, convex regime stems from the fact that clipping biases stochastic gradients. Bias is arguably more challenging in the convex setting than in the non-convex setting: the former intimately relies on stochastic gradients being unbiased, while for the latter it suffices to average enough stochastic gradient such that the noise drops below the required degree of stationarity~\citep{koloskova2023revisiting}. This is potentially the reason previous studies of the convex setting  \citep{li2024convex,koloskova2023revisiting} proved convergence only in the deterministic  regime. 

\paragraph{Our contribution.}
In this work, we analyze gradient clipping in the convex, stochastic, \lzo-smooth setting with light-tailed noise. Our contributions are as follows. 
\begin{itemize}[leftmargin=*]
    \item We show that the pattern of an $L_0$-smooth convergence rate with $L_1$-dependent low-order terms extends to the stochastic convex setting.  For clipped SGD with $\sigma$-sub-Gaussian gradient noise, we prove a sub-optimality bound of \mbox{$\O*{\frac{\log \prn*{{T}/{\delta}} L_0 R_0^2}{T} + \frac{\log^2 \prn*{{T}/{\delta}} \sigma R_0}{\sqrt{T}}}$}  that holds with probability at least $1-\delta$ for \mbox{$T = \OMEGA{\log \prn*{\frac{T}{\delta}} L_1^2 R_0^2}$}. That is, we bound the stochastic gradient query complexity of achieving optimality gap $\epsilon$ with probability at least $1-\delta$ by $\Otil*{\frac{L_0 R^2}{\epsilon} + \frac{\sigma^2 R_0^2}{\epsilon^2}+(L_1 R_0)^2}$, where $\Otil{\cdot}$ hides factors poly-logarithmic in $\frac{1}{\delta \epsilon}$. This matches the best known bounds for (fixed step-size) SGD in the $L_0$-smooth case, which are $\OMEGA*{\frac{L_0 R^2}{\epsilon} + \frac{\sigma^2 R_0^2}{\epsilon^2}}$~\cite{lan2012optimal}. Our bound depends on $L_1$ only through an additive term, with no implicit dependence on $\exp \prn*{L_1 R_0}$. 

    \item We show the same complexity bound for two variations of SGD: adaptive SGD \citep{mcmahan2017survey} with clipping, and SGD with a variable stepsize which we refer to as ``implicit clipping.''

    \item We show that, to achieve the bounds above, precise knowledge of the parameter $L_1$ is not required: simply replacing $L_1$ by the conservative choice $\Otil{T^{1/2}/R_0}$ suffices. This result does not appear in papers studying the deterministic regime.

    \item We perform numerical experiments that demonstrate the benefits of gradient clipping for SGD on convex, \lzo-smooth functions, and assess the empirical effect of some of our algorithmic choices.
\end{itemize}

For technical reasons, we employ a \emph{double sampling} technique that uses two independent stochastic gradient samples in each update: one to estimate the direction of the update, and another to estimate its magnitude. This technique is necessary in order to apply some of the probabilistic tools we use, which assume sequences of unbiased random variables, but our empirical analysis suggests it might not be helpful in practice. We elaborate on this technique in the analysis as well as in the experiments. 

The paper organization is as follows. In \Cref{sec:related_work} we survey related work. In \Cref{sec:analysis} we outline our algorithmic framework and prove our theoretic results. In \Cref{sec:experiments} we describe our experiments and discuss their results. In \Cref{sec:conclusion} we provide a short conclusion. 
\section{Related work}\label{sec:related_work}

\paragraph{Generalized smoothness definitions.} \citet{zhang2019why} first introduced the concept of \lzo-smoothness as in \Cref{def:lzo_smoothness}. \citet{zhang2020improved,gorbunov2024methods,vankov2024optimizing} provide useful equivalent definitions. \citet{li2024convex} introduce an even more general notion of  smoothness: given a non-decreasing continuous function $\ell$, they define $\ell$-smoothness as the property $\norm{\hess f(x)} \leq \ell(\norm{\grad f(x)})$. In this work, we focus on \lzo-smoothness. 

\paragraph{Algorithms for \lzo-smooth optimization.} Most prior work \citep{zhang2019why,zhang2020improved,reisizadeh2023variance,koloskova2023revisiting,gorbunov2024methods,vankov2024optimizing,lobanov2024linear,tyurin2024toward} consider GD/SGD with gradient clipping, where the stepsize is of the form $\eta' \min \crl*{1, \frac{\clip}{\norm{g}}}$ or $\eta' \frac{c}{\norm{g} + c}$, where $g$ is a (possibly stochastic) gradient. \citet{zhang2019why,vankov2024optimizing} show that these two forms are closely related. While we focus on clipping methods, other methods are also analyzed under \lzo-smoothness, such as normalized stepsizes  \citep{zhao2021convergence,chen2023generalized,yang2024independently}, Polyak stepsizes \citep{takezawa2024polyak,gorbunov2024methods,vankov2024optimizing}, coordinate descent methods \citep{lobanov2024linear}, adaptive SGD \citep{faw2023beyond,wang2023convergence,hong2024revisiting} and Adam \citep{li2023convergence,hong2024convergence,wang2024convergence}. 

\paragraph{Gradient clipping for \lzoTitle-smooth functions.} The non-convex regime was the first to be explored. \citet{zhang2019why} provide the first theoretical demonstration of the advantage of clipped GD, and 
\citet{koloskova2023revisiting,vankov2024optimizing,tyurin2024toward} subsequently improve the bounds to $\O*{ \sqrt{{L_0 \Delta}/{T}} + {L_1 \Delta}/{T}}$. Clipped SGD is considered under several noise assumptions. \citet{zhang2019why,zhang2020improved} consider $\sigma$-bounded gradient noise with probability 1. \citet{zhang2019why} prove a rate of $\O*{(\Delta')^2/T^{1/4} + (L_0 + L_1 \sigma) \Delta'/T^{1/2} + L_1 \Delta'/T}$, where $\Delta' = \Delta + (L_0 + L_1 \sigma) \sigma^2 + \sigma L_0^2 / L_1$. \citet{zhang2020improved} prove a rate of $\O*{{L_0 \sigma^2 \Delta}/{T^{1/4}}}$ for $T = \OMEGA*{L_1^4 / L_0^3}$. While the former has no conditions on $T$, the latter has better dependency on the problem parameters. \citet{li2024convex,koloskova2023revisiting} consider $\sigma$-bounded gradient noise variance. \citet{li2024convex} prove a rate that has a dependency on the initial gradient norm, and \citet{koloskova2023revisiting} show an unavoidable bias term when considering all clipping thresholds at once.  \citet{tyurin2024toward} consider light-tailed noise, and using a batch size of $\O*{\sigma^2 T^2}$, obtain a rate of $\O*{{L_1 \Delta}/{T} + \sqrt{{L_0 \Delta}/{T}}}$.  

The convex regime recently received much attention. \citet{koloskova2023revisiting,li2024convex} prove a convergence rate for clipped GD where the dominating term is $\O*{{(L_0 + M L_1) R_0^2}/{T}}$, where $M$ is the maximal gradient norm among the iterates. As discussed in the introduction, the term $M$ may be exponential in $L_1 R_0$. \citet{gorbunov2024methods,vankov2024optimizing} independently prove a convergence rate of $\O*{{L_0 R_0^2}/{T}}$ with additive factors of $L_1^2 R_0^2$ and $\min \crl*{L_1^2 R_0^2, L_1 R_0 \log (\Delta T)}$, respectively. \citet{lobanov2024linear} show that in some initial phase of the algorithm, clipped GD enjoys linear convergence. \citet{gorbunov2024methods,vankov2024optimizing} both propose acceleration methods; The former has an exponential dependency on $R_0$, and the latter requires to solve a one-dimensional optimization problem in each iteration. 

In the stochastic convex case, \citet{gorbunov2024methods} consider finite-sum functions, with an additional assumption that all the functions share a common minimizer. They show a convergence rate of $\O*{{L_0 R_0^2}/{T}}$ in expectation for $T = \OMEGA*{n L_1^2 R_0^2}$, where $n$ is the number of functions. In concurrent and independent work, \citet{lobanov2025power} present a general framework and study both first- and zero-order methods. Assuming bounded noise variance, they obtain bounds for arbitrary clipping thresholds, and prove linear convergence in some special cases. However, their rate of convergence depends on $M$ and $\Delta$, both of which may be exponential in $L_1 R_0$. 

To the best of our knowledge, our work provides the first rate of convergence for stochastic, convex, \lzo-smooth optimization without exponential dependence on $L_1 R_0$, and with a leading-order term independent of $L_1$. For the case of $\sigma = 0$, our results match the state-of-the-art rate from the deterministic regime up to a logarithmic factor.\footnote{Note that the work of \citet{lobanov2024linear} is not directly comparable to ours, as their linear convergence does not hold asymptotically, but rather for some initial phase of the run.} Our results hold with high probability, and the dependence on $1/\delta$ is poly-logarithmic.

\paragraph{Adaptive SGD for \lzoTitle-smooth functions.} \citet{faw2023beyond,wang2023convergence} consider adaptive SGD for non-convex functions with an affine variance assumption. In the context of bounded variance, their rates translate to $\nicefrac{\prn*{L_1 \Delta + \sigma}^2}{\delta^2 T} + \nicefrac{\sigma (L_1 \Delta + \sigma)}{\delta^2 \sqrt{T}}$. In a similar setting, \citet{hong2024convergence} prove a rate that is logarithmic in $1/\delta$, but polynomial in the dimension. 

Our work on adaptive methods has several differences from the above. First, we analyze a \textit{clipped} variation of adaptive SGD. Second, we consider convex functions and assume a stronger noise assumption. Third, our rate simultaneously has a poly-logarithmic dependence on $1/\delta$, is \textit{not} dimension-dependent, and has a weaker dependence on $L_1$. Lastly, as for clipped SGD, we match the state-of-the-art result from the deterministic setting up to a logarithmic factor.

\section{Analysis}\label{sec:analysis}

\paragraph{Notation.}
Throughout the paper, $\norm{\cdot}$ is the Euclidean norm, $\inner{\cdot}{\cdot}$ is the Euclidean dot product, $\text{Proj}_\xset (\cdot)$ is the Euclidean projection onto the set $\xset$ and $\ball(x,r)$ is the Euclidean ball of radius $r$ centered at $x$. We denote $\log_+ (\cdot) := 2 + \log (\cdot)$, $R_t := \norm{x_t - x\opt}$ and $\Delta_t := f(x_t) - f(x\opt)$. 

\paragraph{Problem setting.}
We consider the optimization problem 
\[
    \minimize_{x \in \R^d} f(x)
\]
where the function $f$ satisfies the following.
\begin{assumption}
    \label{assump:convex}
    The function $f : \R^d \to \R$ is convex, and $f$ attains a minimum at some $x\opt \in \R^d$ with distance at most $R$ from the initialization. 
\end{assumption}
\begin{assumption}
    \label{assump:lzo}
    The function $f$ is twice-differentiable\footnote{The twice-differentiability assumption can be relaxed by using a smoothness definition such as in \citet{koloskova2023revisiting}, for which the smoothness lemmas we rely on still apply.} and \lzo-smooth (see \Cref{def:lzo_smoothness}).
\end{assumption}
We remark that the distance bound $R$ is only required for our analysis of Clipped Adaptive SGD. Specifically, \Cref{thm:basic} does not require it (and does not require the corresponding projection operator present in \Cref{alg:clippedSGDdouble}).

\begin{assumption}[Bounded noise]
    \label{assump:bounded_noise}
    The oracle $\mc{G}$ satisfies 
    \begin{align*}
        \Pr{\norm{\mc{G}(x) - \grad f(x)}^2 \leq \sigma^2} = 1.
    \end{align*}
\end{assumption}
\begin{assumption}[Sub-Gaussian noise]
    \label{assump:light-tail}
    The oracle $\mc{G}$ satisfies 
    \begin{align*}
    \Ex*{{\exp \prn*{\norm{\mc{G}(x) - \grad f(x)}^2 / \sigma^2}}} \leq \exp(1).
    \end{align*}
\end{assumption}

\paragraph{Notes on our assumptions.} In \Cref{assump:convex}, the existence of a minimum is required for a technical step in our main high-probability argument (\Cref{lemma:martingale}). The bound $R$ allows us to analyze AdaGrad-like algorithms without explicitly constraining the objective's domain, thereby letting us use a fundamental lemma on \lzo-smoothness (\Cref{lemma:bound_on_grad_squared}). \Cref{assump:light-tail} is a standard light-tail noise enabling high-probability bounds \citep{zhang2019why,zhang2020improved,tyurin2024toward}. We conduct most of our analysis under a stronger bounded noise assumption (\Cref{assump:bounded_noise}) and then use a reduction~\cite{attia2023sgd} to lift our result to hold under the weaker \Cref{assump:light-tail}. It is sometimes possible to prove high probability bounds under even more relaxed moment-bound assumptions \citep{davis2021low,gorbunov2020stochastic,gorbunov2021high,nazin2019algorithms,nguyen2023improved,sadiev2023high}; doing so in our setting is an interesting topic for future work. 

\paragraph{Algorithms.}

\begin{algorithm}[h] 
	\setstretch{1.1}
	\caption{Clipped SGD With Double Sampling}
	\label{alg:clippedSGDdouble}
    \begin{algorithmic}[1]
        \LinesNumbered
        \DontPrintSemicolon
        \Input{Initialization $x_0 \in \R^d$, gradient oracle $\mc{G}$ and bound $R$ on $\norm{x_0 - x\opt}$.}
        \Parameters{Clipping rule $\alpha_t$, ``unclipped'' step size $\eta_t$ and threshold $\clip$.}
        
        \STATE $\mc{T}_1, \mc{T}_2 \gets \emptyset$\; 
        \For{$t = 0, 1, 2, \ldots$ }{ 
            \STATE $\dbl{g_t} \gets \mc{G}(x_t)$\;
            \STATE compute $\alpha_t$ and $\eta_t$ using $\dbl{g_t}$\;
            \lIf{$\clip \leq \norm{\dbl{g_t}}$}{$\mc{T}_1 = \mc{T}_1 \cup \crl*{t}$ \textbf{else} $\mc{T}_2 = \mc{T}_2 \cup \crl*{t}$}
            \STATE $g_t \gets \mc{G}(x_t)$\;
            \STATE $x_{t+1} = \text{Proj}_{\ball(x_0,R)} \prn*{x_t - \eta_t \alpha_t g_t}$ \Comment*[f]{projection only required for variants of Adaptive SGD} \;
        }
        \lIf{$|\mc{T}_2| \neq \emptyset$}{$\bar{x} = \frac{1}{|\mc{T}_2|} \sum_{t \in \mc{T}_2} x_t$ \textbf{else} $\bar{x} = x_0$}
        \Return $\bar{x}$\;
    \end{algorithmic}
\end{algorithm}

Our proposed methods are applications of \Cref{alg:clippedSGDdouble}, which has three parameters: 
\begin{enumerate}[leftmargin=*]
    \item Clipping rule $\alpha_t$: in most settings, $\alpha_t=\min \crl*{1, \frac{\clip}{\norm{\dbl{g_t}}}}$ for some stochastic gradient $\dbl{g_t}$ and threshold $\clip$. 
    
    \item ``Unclipped'' step size $\eta_t$: the product of $\eta_t$ and $\alpha_t$ constitutes the SGD step size.
    
    \item Threshold $\clip$: we say that clipping occurs whenever $\norm{\dbl{g_t}} \geq \clip$. We intentionally define this separately from $\alpha_t$ to allow applications of the algorithm to set $\alpha_t = 1$.
\end{enumerate}

A key aspect of \Cref{alg:clippedSGDdouble} is that it uses ``double sampling,'' querying the oracle twice in each iteration. The two queries are on the same point but are independent from each other. One sample is used to compute the ``unclipped'' step size $\eta_t$ and the clipping rule $\alpha_t$, and the other sample determines the direction of the gradient step. This enables analyzing $\Ex{\eta_t \alpha_t g_t}$ by conditioning on the clipping result without incurring a bias in $g_t$. We remark that this method is also used in \citet{yang2024independently}, who analyze normalized SGD for non-convex functions.

Another non-standard aspect of \Cref{alg:clippedSGDdouble} is the way it chooses which iterates to average for the final result. The algorithm tracks the sets $\mc{T}_1$ and $\mc{T}_2$, which correspond to iterations where clipping occurs/does not occur, respectively. These sets are considered when deciding the return value: if $\mc{T}_2 \neq \emptyset$ then we return $\bar{x} = \frac{1}{|\mc{T}_2|} \sum_{t \in \mc{T}_2} x_t$, and otherwise we return the initial point $\bar{x} = x_0$; our proofs show that $|\mc{T}_2| \geq \frac{T}{2}$ with high probability.

\renewcommand{\arraystretch}{2}
\begin{table*}[ht]
    \centering
    \caption{Definition of our methods as applications of \Cref{alg:clippedSGDdouble}. Under \Cref{assump:bounded_noise} (bounded noise) we set $\sigma' := \sigma$. Under \Cref{assump:light-tail} (light tails) we set $\sigma' := \ltsigma$.}
    \vspace{0.1in}
    \begin{tabular}{@{\hskip 1pt}c@{\hskip 1pt}c@{\hskip 1pt}c@{\hskip 1pt}c@{\hskip 1pt}}
    \toprule
                 & \textbf{step size $\eta_t$} & \textbf{clipping rule $\alpha_t$} & \textbf{threshold $\clip$} \\ \midrule
                 
    standard     & $\frac{1}{16} \min \crl*{\frac{1}{11 L_0}, \frac{1}{L_0 + \frac{\sigma' \sqrt{T}}{R_0}}}$ & $\min \crl*{1, \frac{\clip}{\norm{\dbl{g_t}}}}$ & $\frac{1}{L_1} \max \crl*{10 L_0, \frac{\sqrt{T}}{R_0} \sigma'}$ \\ \midrule
    
    implicit     & $\frac{1}{8} \prn*{L_0 + \norm{\dbl{g_t}} L_1 + \frac{\sigma' \sqrt{T}}{R_0}}^{-1}$ & $1$ & $\frac{1}{L_1} \max \crl*{10 L_0, \frac{\sqrt{T}}{R_0} \sigma'}$ \\ \midrule
    
    conservative & $\frac{1}{16} \min \crl*{\frac{1}{11 L_0}, \frac{1}{L_0 + \frac{\sigma' \sqrt{T}}{R_0}}}$ & $\min \crl*{1, \frac{\clip}{\norm{\dbl{g_t}}}}$ & $64 \sqrt{\log_+ \prn*{\frac{T}{\delta}}} \frac{R_0}{\sqrt{T}} \max \crl*{10 L_0, \frac{\sqrt{T}}{R_0} \sigma'}$ \\ \midrule
                 
    adaptive     & $R \prn*{\sum_{i=0}^t \alpha_i^2 \norm{g_i}^2}^{-\frac{1}{2}}$ & $\min \crl*{1, \frac{\clip}{\norm{\dbl{g_t}}}}$ & $\frac{1}{L_1} \max \crl*{10 L_0, \frac{\sqrt{T}}{R} \sigma'}$ \\ \midrule
    
    \setstretch{0.1}
    \begin{tabular}[c]{@{}c@{}} adaptive + \\ conservative \end{tabular}
    & $R \prn*{\sum_{i=0}^t \alpha_i^2 \norm{g_i}^2}^{-\frac{1}{2}}$ & $\min \crl*{1, \frac{\clip}{\norm{\dbl{g_t}}}}$ & $15 \sqrt{\log_+ \prn*{\frac{T}{\delta}}} \frac{R}{\sqrt{T}} \max \crl*{10 L_0, \frac{\sqrt{T}}{R} \sigma'}$ \\ \bottomrule
    \end{tabular}
    \label{tab:parameters}
\end{table*}

\Cref{tab:parameters} presents our different methods, that is, the different applications of \Cref{alg:clippedSGDdouble}. We provide a short description of each method:
\begin{enumerate}[leftmargin=*]
    \item ``Standard clipping'' is an extension of the common clipping stepsizes from the deterministic \lzo-smooth setting \citep{zhang2019why,zhang2020improved,vankov2024optimizing}. Indeed, for $\sigma = 0$, when ignoring constants, we have $\eta_t \alpha_t = \min \crl*{\frac{1}{L_0}, \frac{1}{L_1 \norm{\dbl{g_t}}}}$.

    \item ``Implicit clipping'' is the method that prior work refers to as a ``normalized step size'' or ``smoothed clipping'' \citep{zhang2019why,gorbunov2024methods}. In this method, $\alpha_t = 1$ and $\eta_t$ is a function of $\dbl{g_t}$.

    \item ``Conservative clipping'' is a method that is independent of $L_1$. This method stems from the proof of \Cref{thm:basic}, which requires $T \geq \log_+ \prn*{\tfrac{T}{\delta}} (64 L_1 R_0)^2$, thereby limiting $L_1$. We use this to modify standard clipping by replacing $L_1$ with its limit.

    \item ``Adaptive clipping'' is a version of adaptive SGD with two changes: clipping is applied according to $\alpha_t$, and the gradient norms in the denominator are also clipped using $\alpha_1,...,\alpha_t$.

    \item ``Adaptive + conservative clipping'' is a similar method that is independent of $L_1$.
\end{enumerate}

\subsection{Clipped SGD}
\begin{restatable}{theorem}{restateMainTheorem}
    \label{thm:basic}
    Let $f : \R^d \to \R$ and suppose \Cref{assump:convex,assump:lzo,assump:light-tail} hold. Let $\delta \in (0,1)$ and let $\bar{x}$ be the output of \Cref{alg:clippedSGDdouble} when run for $T \geq \log_+ \prn*{\frac{T}{\delta}} (64 L_1 R_0)^2$ steps under one of the first 3 rows of \Cref{tab:parameters}. Then with probability at least $1 - 2 \delta$, the optimality gap $f(\bar{x}) - f(x\opt)$ is
    \begin{align*}
        \O*{\frac{\log_+ \prn*{\tfrac{T}{\delta}} \prn*{L_0 R_0^2 + \oltsigma R_0 \sqrt{T}}}{T}}.
    \end{align*}
\end{restatable}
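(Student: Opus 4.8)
The plan is to run a standard SGD regret/descent argument, but carefully handle the bias introduced by clipping using the double-sampling mechanism. I would start from the projection step $x_{t+1} = \text{Proj}_\xset(x_t - \eta_t \alpha_t g_t)$, use nonexpansiveness of projection to write $R_{t+1}^2 \le R_t^2 - 2\eta_t\alpha_t \inner{g_t}{x_t - x\opt} + \eta_t^2\alpha_t^2\norm{g_t}^2$, and then take conditional expectations. The key point is that $\alpha_t,\eta_t$ are measurable with respect to the \emph{first} sample $\dbl{g_t}$, while $g_t$ is an independent second sample, so $\Ex*{\eta_t\alpha_t g_t \mid x_t, \dbl{g_t}} = \eta_t\alpha_t \grad f(x_t)$ and hence $\Ex*{\inner{g_t}{x_t - x\opt}\mid x_t,\dbl{g_t}} = \inner{\grad f(x_t)}{x_t - x\opt} \ge \Delta_t$ by convexity. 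This is exactly what double sampling buys us: the "clipping coefficient" never biases the update direction.

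The heart of the argument is controlling $\eta_t\alpha_t$ from above and below and relating it to the \lzo-smoothness to get a descent-type inequality. First I would split the analysis into clipped iterations ($t\in\mc{T}_1$, where $\norm{\dbl{g_t}}\ge \clip$) and unclipped ones ($t\in\mc{T}_2$). Using the equivalence between \lzo-smoothness and a bound of the form $f(y)\le f(x)+\inner{\grad f(x)}{y-x}+\tfrac{L_0 + L_1\norm{\grad f(x)}}{2}\norm{y-x}^2$ valid when $\norm{y-x}$ is not too large (a consequence of \Cref{def:lzo_smoothness} proven in the cited works), and the fact that the step $\eta_t\alpha_t\norm{g_t}$ is bounded (clipping caps $\alpha_t\norm{\dbl{g_t}}\le \clip$, and $\eta_t$ is chosen small relative to $1/L_0$ and $1/\clip L_1$, so effectively $\eta_t\alpha_t\norm{g_t}\lesssim 1/L_1$), one shows the quadratic term $\eta_t^2\alpha_t^2\norm{g_t}^2$ is dominated by the linear term on average, i.e. the recursion telescopes. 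On unclipped iterations $\alpha_t=1$ and $\eta_t\gtrsim \min\{1/L_0, R_0/(\sigma'\sqrt T)\}$, so summing $2\eta_t\Delta_t \le R_t^2 - R_{t+1}^2 + (\text{noise})$ over $t\in\mc{T}_2$ and using Jensen on $\bar x$ gives the claimed bound, \emph{provided} $|\mc{T}_2|\ge T/2$. Establishing $|\mc{T}_2|\ge T/2$ with high probability is where the condition $T\ge \log_+(T/\delta)(64L_1R_0)^2$ enters: the threshold $\clip = \tfrac{1}{L_1}\max\{10L_0, \sqrt T\sigma'/R_0\}$ is large enough that $\norm{\dbl{g_t}}\ge \clip$ can only happen when $\norm{\grad f(x_t)}$ is large, and a potential/telescoping argument bounds the number of such iterations.

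For the high-probability part I would use a martingale concentration inequality (Freedman-type / Azuma with sub-Gaussian increments) on the sum $\sum_t \eta_t\alpha_t \inner{g_t - \grad f(x_t)}{x_t - x\opt}$; the sub-Gaussian noise assumption (\Cref{assump:light-tail}), combined with $\sigma' = \ltsigma = 3\sqrt{\log(T/\delta)}\sigma$ essentially pre-clipping the noise to a bounded range with probability $1-\delta$, makes the increments bounded so that a union bound over $T$ steps yields the $\log_+(T/\delta)$ factor. One also needs a concentration bound to control $\sum_t \eta_t^2\alpha_t^2\norm{g_t}^2$ (the variance of the stochastic term) and to guarantee $R_t$ stays bounded along the trajectory so the local smoothness inequality remains applicable — this likely uses an induction on $t$ showing $R_t \le O(R_0)$ with high probability, which is the standard delicate point in high-probability SGD analyses.

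The main obstacle I expect is the coupling of three things that are usually handled separately: (i) keeping $R_t$ bounded so that the \lzo local quadratic upper bound holds along the whole path, (ii) ensuring $|\mc{T}_2|\ge T/2$ so that enough ``good'' iterations accumulate, and (iii) the martingale concentration — each of these is conditional on the previous one holding, so the cleanest route is a single induction over $t$ that simultaneously maintains ``$R_s\lesssim R_0$ for all $s\le t$'', ``not too many clipped iterations so far'', and the concentration event, peeling off a $\delta/T$ failure probability at each step. Getting the constants to close (the $64$, the $10L_0$, the $1/16$ step size) is the routine-but-fiddly part that makes this work; conceptually, everything reduces to: double sampling removes the bias, clipping plus small step size bounds the quadratic error term by $O(1/L_1)$ per step, and the threshold is tuned so clipping is rare once $T$ is large enough.
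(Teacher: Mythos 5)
Your proposal captures the paper's approach essentially in full: double sampling removes the clipping bias, a weighted regret bound controls $\sum_t \eta_t\alpha_t\Delta_t$ via martingale concentration (with the Attia--Koren reduction from sub-Gaussian to bounded noise and an induction keeping $R_t$ under control), \lzo-smoothness is used (via the bound $\norm{\grad f(x)}^2 \le 2(L_0 + L_1\norm{\grad f(x)})\Delta$) to dominate the quadratic error term by the linear term, and a counting argument — each clipped iteration contributes at least $\Omega(\log_+(T/\delta)R_0^2/T)$ to the bounded weighted sum, which is where the requirement $T \gtrsim \log_+(T/\delta)(L_1R_0)^2$ enters — gives $|\mc{T}_2|\ge T/2$. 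One small repair you would need when fleshing this out: the step ``summing $2\eta_t\Delta_t\le R_t^2-R_{t+1}^2+\text{(noise)}$ over $t\in\mc{T}_2$'' does not telescope over a subset of the indices; instead one first bounds the full weighted sum $\sum_{t=0}^{T-1}\eta_t\alpha_t\Delta_t$, drops the nonnegative $\mc{T}_1$ terms, and only then lower-bounds $\eta_t\alpha_t\ge\gamma$ on $\mc{T}_2$ before dividing by $|\mc{T}_2|$ and applying Jensen.
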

\noindent 
We remark that \Cref{thm:basic} also holds when $R = \infty$. That is, the projection is not required for the first 3 methods from \Cref{tab:parameters}. 

\paragraph{Proof sketch.} In the sketch, we first prove the desired rate under \Cref{assump:bounded_noise} and with probability at least $1 - \delta$. To obtain the desired rate under \Cref{assump:light-tail} with probability at least $1 - 2 \delta$, we apply a reduction from \citet{attia2023sgd}, which we elaborate on at the end of the sketch. 

We begin by presenting the main outline of the proof, in which we state two claims that will be proven immediately following the outline. To maintain conciseness, we defer the full details to \Cref{app:lemmas,app:thm1-proof}. We start with the first claim.
\begin{restatable}{claim}{restateClaimOne}
    \label{claim:high_prob}
    With probability at least $1 - \delta$, 
    \begin{align*}
        \sum_{t=0}^{T-1} \eta_t \alpha_t \Delta_t
        \leq 2 \log_+ \prn*{\tfrac{T}{\delta}} R_0^2.
    \end{align*}
\end{restatable}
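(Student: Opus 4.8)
The natural approach is a standard SGD descent/regret telescoping argument, adapted to handle the clipping bias via the double-sampling trick. Fix any iteration $t$ and condition on the filtration $\mathcal{F}_t$ generated by $x_0,\dots,x_t$ (and the first sample $\dbl{g_t}$, which determines $\alpha_t$ and $\eta_t$ but is independent of $g_t$). Starting from $\norm{x_{t+1}-x\opt}^2 \le \norm{x_t - \eta_t\alpha_t g_t - x\opt}^2$ (using nonexpansiveness of the projection), expand to get
\begin{align*}
    R_{t+1}^2 \le R_t^2 - 2\eta_t\alpha_t \inner{g_t}{x_t - x\opt} + \eta_t^2\alpha_t^2 \norm{g_t}^2 .
\end{align*}
Because $\dbl{g_t}$ and $g_t$ are independent given $\mathcal{F}_t$, we have $\Ex{\eta_t\alpha_t g_t \mid \mathcal{F}_t,\dbl{g_t}} = \eta_t\alpha_t \grad f(x_t)$, so that conditionally $\Ex{\inner{g_t}{x_t-x\opt}}=\inner{\grad f(x_t)}{x_t-x\opt}\ge \Delta_t$ by convexity. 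The quadratic term $\eta_t^2\alpha_t^2\norm{g_t}^2$ must be controlled: split $\norm{g_t}^2 \le 2\norm{\grad f(x_t)}^2 + 2\norm{g_t - \grad f(x_t)}^2$, bound the noise part using \Cref{assump:bounded_noise}, and use the specific $\eta_t$ and threshold $c$ from the table so that $\eta_t\alpha_t \norm{\grad f(x_t)}^2$ is dominated by a small multiple of $\norm{\grad f(x_t)} \le $ (threshold-type bound) $\cdot \Delta_t/R_0$-ish — this is where the \lzo-smoothness and the choice $c \gtrsim L_0/L_1$ enter, ensuring the step size is small enough that the quadratic term is absorbed into a fraction of $\eta_t\alpha_t\Delta_t$ plus the noise floor.

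After rearranging, one obtains a per-step inequality of the (schematic) form
\begin{align*}
    \Ex{R_{t+1}^2 \mid \mathcal{F}_t} \le R_t^2 - \eta_t\alpha_t \Delta_t + (\text{noise term}),
\end{align*}
where the noise term is on the order of $\eta_t^2\alpha_t^2\sigma^2 \lesssim \sigma^2 R_0^2 / (\sigma^2 T) = R_0^2/T$ (using $\eta_t \lesssim R_0/(\sigma\sqrt{T})$ from the table). Summing over $t=0,\dots,T-1$ and telescoping gives $\sum_t \eta_t\alpha_t\Delta_t \lesssim R_0^2 + (\text{martingale deviation}) + R_0^2$ in expectation; the factor $\log_+(T/\delta)$ in the claim and the high-probability statement come from controlling the martingale difference sequence $\{R_{t+1}^2 - \Ex{R_{t+1}^2\mid\mathcal{F}_t}\}$. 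The cleanest route is probably a nonnegative supermartingale / Ville-type argument, or a bounded-difference concentration, applied to $R_t^2 + \sum_{s<t}(\eta_s\alpha_s\Delta_s - \text{noise})$; since each increment is controlled and $R_t^2 \ge 0$ automatically, one can extract a high-probability bound of the form $\sum_t \eta_t\alpha_t\Delta_t \le 2\log_+(T/\delta) R_0^2$ directly. (The constant $2$ and the precise $\log_+$ dependence suggest the authors use a specific exponential-supermartingale lemma tuned so that the noise-floor terms, being $O(R_0^2/T)$ each and summing to $O(R_0^2)$, fold neatly into the stated bound.)

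**Main obstacle.** The delicate part is simultaneously (i) showing the quadratic error term $\eta_t^2\alpha_t^2\norm{g_t}^2$ is genuinely absorbed — this requires the interplay between the \lzo-smoothness bound on $\norm{\grad f(x_t)}$ relative to $\Delta_t$ (a Bernstein/self-bounding-type inequality for \lzo-smooth convex functions, presumably established in the deferred lemmas), the clipping threshold $c$, and the step size $\eta_t$ — and (ii) getting the high-probability conversion with only a single $\log_+(T/\delta)$ factor and a clean leading constant. Point (i) is where $\alpha_t$ clipping earns its keep: when $\norm{\dbl{g_t}}$ is large the factor $\alpha_t \approx c/\norm{\dbl{g_t}}$ shrinks the step, but since $\dbl{g_t}\approx g_t \approx \grad f(x_t)$ up to $\sigma$-noise, this tames $\eta_t\alpha_t^2\norm{g_t}^2$. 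Handling the mismatch between the norm $\norm{\dbl{g_t}}$ used in $\alpha_t$ and the norm $\norm{g_t}$ appearing in the quadratic term — bridged by the noise bound $\sigma$ and a case analysis on whether clipping occurred (which is exactly why $\mc{T}_1,\mc{T}_2$ are tracked) — is the technical crux I would expect to consume most of the argument.
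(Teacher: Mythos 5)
Your high-level plan is essentially the same as the paper's (Lemma~\ref{lemma:martingale}): start from the projection inequality $R_{t+1}^2 \le R_t^2 - 2\eta_t\alpha_t\inner{g_t}{x_t-x\opt} + \eta_t^2\alpha_t^2\norm{g_t}^2$, use convexity to lower-bound $\inner{\grad f(x_t)}{x_t-x\opt}\ge\Delta_t$, exploit the conditional independence of $\dbl{g_t}$ and $g_t$ so that $Z_t := \eta_t\alpha_t\inner{\grad f(x_t)-g_t}{x_t-x\opt}$ is a martingale difference sequence, and absorb $\eta_t^2\alpha_t^2\norm{g_t}^2$ via a case split on whether $\sigma$ dominates $\norm{\grad f(x_t)}$ together with the \lzo self-bounding inequality $\norm{\grad f(x)}^2\le 2(L_0+L_1\norm{\grad f(x)})\Delta$ (Lemma~\ref{lemma:bound_on_grad_squared}). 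That all matches.

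The genuine gap is in the concentration step, and it is exactly where you waved your hands. The increments $Z_t$ are bounded only by $\eta_t\alpha_t\sigma R_t \lesssim R_0 R_t/\sqrt{T}$, and $R_t$ is \emph{not} a priori bounded: Theorem~\ref{thm:basic} explicitly allows $\xset=\R^d$, so there is no diameter bound to fall back on. ``Bounded-difference concentration'' therefore does not apply directly, and ``$R_t^2\ge 0$ plus Ville'' does not by itself control the quadratic variation, which still involves the random $R_t$'s. The paper closes this loop with a self-referential induction: apply the martingale concentration (Lemma~\ref{lemma:sub_gaussian}) to each prefix $\sum_{s<t}Z_s$, expressing the bound in terms of $R_{\max,t-1}^2$; union-bound over $t$; then, on that good event, prove by induction on $t$---using the telescoped projection inequality and the just-obtained prefix bound---that $R_{\max,t}^2\le C$ for $C := 2\bigl((1+\tfrac{1}{2}\log(T/\delta))R_0^2 + \sum_t\eta_t^2\alpha_t^2\norm{g_t}^2\bigr)$. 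Only after this $R_{\max,t}$ control is established can one plug back in to bound $\sum_t Z_t$ and hence $\sum_t\eta_t\alpha_t\Delta_t$. Without that induction (or an equivalent ``stopped'' / self-normalized argument), your martingale step does not close, and that is precisely where the claimed constant $2\log_+(T/\delta)$ comes from. You flagged this as the crux, correctly, but you did not resolve it.
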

\noindent 
Separating the clipped and unclipped iterations, we get
\begin{align*}
    \sum_{t \in \mc{T}_2} \eta_t \alpha_t \Delta_t
    \leq 2 \log_+ \prn*{\tfrac{T}{\delta}} R_0^2 - \sum_{t \in \mc{T}_1} \eta_t \alpha_t \Delta_t.
\end{align*}
We now observe that clipped iterations make large progress, as stated in the following claim.
\begin{restatable}{claim}{restateClaimTwo}
    \label{claim:progress}
    If $t \in \mc{T}_1$ then $\eta_t \alpha_t \Delta_t \geq  4 \log_+ \prn*{\tfrac{T}{\delta}} R_0^2/T$.
\end{restatable}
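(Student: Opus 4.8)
The plan is to show that when clipping occurs at iteration $t$, i.e. $t \in \mc{T}_1$ so that $\norm{\dbl{g_t}} \ge \clip$, the gradient norm $\norm{\grad f(x_t)}$ must itself be large — at least on the order of $\clip$ up to the sub-Gaussian noise scale — and then combine this with the threshold choice $\clip \gtrsim \frac{1}{L_1}\max\{L_0, \frac{\sqrt T}{R_0}\sigma'\}$ and the step-size choice to turn a lower bound on $\norm{\grad f(x_t)}$ into the claimed lower bound on $\eta_t\alpha_t\Delta_t$. First I would use convexity: $\Delta_t = f(x_t) - f(x\opt) \ge \ldots$ is awkward to lower bound directly, so instead I would lower bound $\Delta_t$ in terms of $\norm{\grad f(x_t)}$ via the \lzo-smoothness structure. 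Specifically, along the segment from $x_t$ toward $x\opt$ (or by the standard trick $\Delta_t \ge \frac{\norm{\grad f(x_t)}}{\text{(effective smoothness)}}\cdot(\ldots)$), one shows that when $\norm{\grad f(x_t)}$ is large relative to $L_0/L_1$, the function value gap $\Delta_t$ is at least proportional to $\norm{\grad f(x_t)}$ times a length scale — here the relevant scale is $R_0$, giving $\Delta_t \gtrsim \norm{\grad f(x_t)} R_0$ (this is where one must be careful: the naive bound $\Delta_t \ge \frac{1}{2L}\norm{\grad f(x_t)}^2$ uses global smoothness which fails; instead use convexity $\Delta_t \ge \inner{\grad f(x_t)}{x_t - x\opt} \ge -\norm{\grad f(x_t)} R_t$, which has the wrong sign, so one really needs the refined \lzo argument showing $f$ grows at least linearly once gradients are big — I would invoke the deterministic \lzo lemmas cited from \citet{vankov2024optimizing,gorbunov2024methods}, presumably restated in \Cref{app:lemmas}).

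Next I would handle the stochastic-to-deterministic transfer: $t \in \mc{T}_1$ is an event about $\norm{\dbl{g_t}}$, not $\norm{\grad f(x_t)}$. Since $\dbl{g_t} = \grad f(x_t) + \text{noise}$ with noise that is $\sigma$-sub-Gaussian, a high-probability bound (uniform over $t \le T$, costing a $\sqrt{\log_+(T/\delta)}$ factor, which is exactly why $\sigma'$ in \Cref{tab:parameters} carries that logarithmic factor) gives $\norm{\grad f(x_t)} \ge \norm{\dbl{g_t}} - \sigma' \ge \clip - \sigma'$. Plugging in $\clip = \frac{1}{L_1}\max\{10L_0, \frac{\sqrt T}{R_0}\sigma'\}$, and using $T \ge \log_+(T/\delta)(64 L_1 R_0)^2$ to ensure $\clip \ge$ (const)$\cdot \sigma'$ so that $\clip - \sigma' \gtrsim \clip$, one gets $\norm{\grad f(x_t)} \gtrsim \clip \gtrsim \max\{L_0/L_1,\ \frac{\sqrt T}{L_1 R_0}\sigma'\}$. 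In particular $\norm{\grad f(x_t)} \gg L_0/L_1$, which is precisely the regime where the linear-growth bound $\Delta_t \gtrsim \norm{\grad f(x_t)} R_0$ applies.

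Finally I would combine everything. On clipped iterations, $\alpha_t = \min\{1, \clip/\norm{\dbl{g_t}}\} = \clip/\norm{\dbl{g_t}}$ (or $\alpha_t=1$ for the implicit method), so $\eta_t\alpha_t$ is the "clipped" effective step $\approx \min\{1/L_0, 1/(L_1\norm{\dbl{g_t}}), R_0/(\sqrt T \sigma')\}$ up to constants, and in all three methods one checks $\eta_t \alpha_t \gtrsim \min\{\frac{1}{L_1 \norm{\grad f(x_t)}}, \frac{R_0}{\sqrt T \sigma'}\}$ (using $\norm{\dbl{g_t}} \le \norm{\grad f(x_t)} + \sigma'$ and the regime bound). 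Therefore $\eta_t\alpha_t\Delta_t \gtrsim \min\{\frac{1}{L_1\norm{\grad f(x_t)}}, \frac{R_0}{\sqrt T \sigma'}\}\cdot \norm{\grad f(x_t)} R_0 = \min\{\frac{R_0}{L_1}, \frac{R_0^2 \norm{\grad f(x_t)}}{\sqrt T \sigma'}\}$; using $\norm{\grad f(x_t)} \gtrsim \frac{\sqrt T \sigma'}{L_1 R_0}$ again the second term is also $\gtrsim R_0^2/L_1$... wait, I would instead track the $R_0^2/T$ target directly: since $T \ge \log_+(T/\delta)(64L_1R_0)^2$ we have $\frac{R_0}{L_1} \ge \frac{64\sqrt{\log_+(T/\delta)} R_0^2}{\sqrt T}$, and one more factor of $\sqrt{\log_+(T/\delta)}/\sqrt T \le$ const turns this into $\ge 4\log_+(T/\delta)R_0^2/T$, matching the claim. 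The main obstacle is the first step: establishing the clean linear lower bound $\Delta_t \gtrsim \norm{\grad f(x_t)}\,R_0$ in the large-gradient regime, since this is genuinely special to \lzo-smoothness and needs the structural lemmas rather than any textbook inequality; the rest is bookkeeping across the three parameter rows.
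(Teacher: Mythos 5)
There is a genuine gap in the centerpiece of your argument. You want to lower bound $\Delta_t$ via a linear-growth inequality of the form $\Delta_t \gtrsim \norm{\grad f(x_t)}\,R_0$. No such bound holds, and the \lzo structural lemmas the paper restates do not produce it: the relevant inequality (\Cref{lemma:bound_on_grad_squared}) is
$\norm{\grad f(x_t)}^2 \le 2\prn*{L_0 + L_1\norm{\grad f(x_t)}}\Delta_t$,
which rearranges to $\Delta_t \ge \norm{\grad f(x_t)}^2/\prn*{2(L_0+L_1\norm{\grad f(x_t)})}$. In the large-gradient regime this has length scale $1/L_1$, not $R_0$; the radius $R_0$ enters the final bound only through the \emph{threshold and step-size choices} and the condition $T \ge \log_+(T/\delta)(64L_1R_0)^2$, which forces $1/L_1 \gtrsim \sqrt{\log_+(T/\delta)}\,R_0/\sqrt{T}$. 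Already for the quadratic $f(x)=\tfrac12 x^2$ with $|x_t| \ll R_0$ your proposed inequality fails, so the missing ingredient is a real obstruction, not a constant-chasing detail. Your own remark that the convexity bound has the wrong sign is correct; the resolution is the quadratic-over-affine \lzo bound, not a linear-in-$R_0$ one.

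A second, smaller misstep: you treat the transfer from $\norm{\dbl{g_t}}$ to $\norm{\grad f(x_t)}$ as a uniform-over-$t$ high-probability event costing a $\sqrt{\log_+(T/\delta)}$ factor. In the paper this step is \emph{deterministic}: after reducing to \Cref{assump:bounded_noise} (that reduction is where the $\sqrt{\log}$ on $\sigma'$ comes from, once, globally), $c \ge 6\sigma$ and $\norm{\dbl{g_t}} \ge c$ force $\sigma \le (1/5)\norm{\grad f(x_t)}$ surely, hence $(4/5)\norm{\grad f(x_t)} \le \norm{\dbl{g_t}} \le (6/5)\norm{\grad f(x_t)}$ almost surely. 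Claim~\ref{claim:progress} is a pointwise implication with no probability in it; putting probability back into this step both double-counts the log factor and complicates the downstream union bounds.

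The route the paper actually takes avoids decoupling $\Delta_t$ and $\eta_t\alpha_t$ entirely. Combine the \lzo bound with the step-size design $\eta_t\alpha_t \lesssim 1/(L_0 + L_1\norm{\dbl{g_t}})$ to get the identity $\eta_t\alpha_t\Delta_t \ge 2\prn*{\eta_t\alpha_t\norm{\dbl{g_t}}}^2$ (this is \Cref{lemma:bound_weighted_grad_norm}); then observe that clipping pins $\alpha_t\norm{\dbl{g_t}} = c$, so the right-hand side becomes $2(\eta_t c)^2$, a deterministic function of the parameters. The choice of $\eta_t$ and $c$ gives $\eta_t c \ge 2\sqrt{\log_+(T/\delta)}\,R_0/\sqrt{T}$ directly, which is exactly where $R_0$ enters. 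This is where your bookkeeping should converge; the chain you wrote collapses to a tautology once the false linear-in-$R_0$ bound is removed, because the remaining ingredient is $\Delta_t \gtrsim \norm{\grad f(x_t)}/L_1$, and the $R_0$ you need must be imported from the definition of $c$ and $\eta_t$, not from a pointwise lower bound on $\Delta_t$.
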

\noindent 
Therefore, with probability at least $1-\delta$,
\begin{align*}
    \sum_{t \in \mc{T}_2} \eta_t \alpha_t \Delta_t
    \leq 2 \log_+ \prn*{\tfrac{T}{\delta}} R_0^2 - 4 \log_+ \prn*{\tfrac{T}{\delta}} R_0^2 \frac{|\mc{T}_1|}{T}.
\end{align*}
This implies $|\mc{T}_1| \leq \frac{T}{2}$ and therefore $|\mc{T}_2| \geq \frac{T}{2}$. In particular, this shows that the output of the algorithm is $\bar{x} = \frac{1}{|\mc{T}_2|} \sum_{t \in \tau_2} x_t$. 

In unclipped iterations we have $\norm{\dbl{g_t}} \leq c$. The norm $\norm{\dbl{g_t}}$ appears only in the \textit{denominator} of $\eta_t \alpha_t$, so we can bound $\eta_t \alpha_t$ from below by substituting $\norm{\dbl{g_t}}$ with $\clip$. Thus, we show that $\eta_t \alpha_t \geq \frac{1}{16} \prn*{11 L_0 + \tfrac{\sigma \sqrt{T}}{R_0}}^{-1} := \gamma$ (see details in \Cref{lemma:bound_tau1_suboptimality}). By this we have
\begin{align*}
    \gamma \sum_{t \in \mc{T}_2} \Delta_t
    \leq \sum_{t \in \mc{T}_2} \eta_t \alpha_t \Delta_t
    \leq 2 \log_+ \prn*{\tfrac{T}{\delta}} R_0^2.
\end{align*}
Dividing by $\gamma |\mc{T}_2|$ and using the bound on $|\mc{T}_2|$, we have
\begin{align*}
    \frac{1}{|\mc{T}_2|} \sum_{t \in \mc{T}_2} \Delta_t
    \leq \frac{64 \prn*{11 L_0 + \tfrac{\sigma \sqrt{T}}{R_0}} \log_+ \prn*{\tfrac{T}{\delta}} R_0^2}{T}.
\end{align*}
Using Jensen's inequality completes the proof.

\paragraph{Proof sketch of \Cref{claim:progress}}\hspace{-0.75em} (full proof is in \Cref{lemma:bound_weighted_grad_norm,lemma:bound_tau1_suboptimality}). 
Consider iterations where $t \in \mc{T}_1$, that is, iterations where $\norm{\dbl{g_t}} > c$: By our choice of threshold we have $c \geq 6 \sigma$. Therefore, for the sample to be above the threshold, the gradient norm must dominate over the noise, implying $\norm{\dbl{g_t}} \approx \norm{\grad f(x_t)}$. A known property of \lzo-smooth functions is that for any $x \in \R^d$, 
\begin{align*}
    \norm{\grad f(x)}^2 \leq 2 \prn*{L_0 + \norm{\grad f(x)} L_1} \prn*{f(x) - f(x\opt)}.
\end{align*}
Substituting $x = x_t$, using our choice of $\alpha_t$ and $\eta_t$, and substituting $\norm{\dbl{g_t}} \approx \norm{\grad f(x_t)}$, we get 
\begin{align*}
    \norm{\dbl{g_t}}^2 
    &\leq 2 \prn*{L_0 + \norm{\dbl{g_t}} L_1} \Delta_t 
    \leq \prn{2 \eta_t \alpha_t}^{-1} \Delta_t.
\end{align*} 
Multiplying by $2 \prn{\eta_t \alpha_t}^2$, we get 
\begin{align*}
    \eta_t \alpha_t \Delta_t \geq 2 \prn*{\eta_t \alpha_t \norm{\dbl{g_t}}}^2.
\end{align*}
Clipping implies
\begin{align*}
    \eta_t \alpha_t \norm{\dbl{g_t}} \overset{(i)}{\approx} \eta_t c \overge{(ii)} 2 \sqrt{\log_+ \prn*{\tfrac{T}{\delta}}} \frac{R_0}{\sqrt{T}},
\end{align*}
where $(i)$ is an equality in the case $\alpha_t = \min \crl*{1 ,\frac{c}{\norm{\dbl{g_t}}}}$ and $(ii)$ is due to the choice of $\eta_t$ and $\clip$. Combining the last two inequalities, we get the bound $\eta_t \alpha_t \Delta_t \geq 8 \log_+ \prn*{\tfrac{T}{\delta}} \frac{R_0^2}{T}$.

\paragraph{Proof sketch of \Cref{claim:high_prob}}\hspace{-0.75em} 
(full proof is in \Cref{lemma:martingale}). We split the signal from the noise by expressing the sum $\sum_{t=0}^{T-1} \eta_t \alpha_t \inner{\grad f(x_t)}{x_t - x\opt}$ as
\begin{align*}
    \underbrace{\sum_{t=0}^{T-1} \eta_t \alpha_t \inner{g_t}{x_t - x\opt}}_{S_1} + \underbrace{\sum_{t=0}^{T-1} \eta_t \alpha_t \inner{\grad f(x_t) - g_t}{x_t - x\opt}}_{S_2}.
\end{align*}
To bound $S_2$, we use techniques from \citet{attia2023sgd}. The random variables  $\eta_t \alpha_t$ and $g_t$ are independent conditionally on $x_t$ due to the double sampling, and therefore the elements of $S_2$ form a \textit{martingale difference sequence} w.r.t. $\xi_t = (x_t,\dbl{g_t})$. This allows us to bound $S_2$ using a martingale concentration bound and standard analysis. We get that with probability at least $1 - \delta$,
\begin{align*}
    S_2 \leq \prn*{\frac{1}{8} + \frac{5}{16} \log \prn*{\frac{T}{\delta}}} R_0^2 + \frac{1}{8} \sum_{t=0}^{T-1} \eta_t^2 \alpha_t^2 \norm{g_t}^2.
\end{align*}
To bound $S_1$, we use standard analysis and show that
\begin{align*}
    S_1 
    &\leq \frac{R_0^2}{2} + \frac{1}{2} \sum_{t=0}^{T-1} \eta_t^2 \alpha_t^2 \norm{g_t}^2. 
\end{align*}
By the convexity of $f$ and the above displays we find that, with probability at least $1 - \delta$, 
\begin{align*}
    \sum_{t=0}^{T-1} \eta_t \alpha_t \Delta_t 
    &\leq \sum_{t=0}^{T-1} \eta_t \alpha_t \inner{\grad f(x_t)}{x_t - x\opt} \\
    &\leq S_1 + S_2 
    \leq \frac{1}{2} \log_+ \prn*{\tfrac{T}{\delta}} R_0^2 + \frac{3}{4} \sum_{t=0}^{T-1} \eta_t^2 \alpha_t^2 \norm{g_t}^2.
\end{align*}
To bound $\eta_t^2 \alpha_t^2 \norm{g_t}^2$, we consider the cases of high noise and low noise: when $\norm{g_t} \geq 6 \sigma$, like in clipped iterations, we have $\norm{g_t} \approx \norm{\grad f(x_t)}$ and therefore $\eta_t^2 \alpha_t^2 \norm{g_t}^2 \leq \frac{1}{2} \eta_t \alpha_t \Delta_t$. when $\norm{g_t} \leq 6 \sigma$, we use $\eta_t \alpha_t \leq \frac{R_0}{8 \sigma \sqrt{T}}$ and get $\eta_t^2 \alpha_t^2 \norm{g_t}^2 \leq \frac{R_0^2}{T}$. 

Plugging everything in and rearranging, we have that with probability at least $1 - \delta$,
\begin{align*}
    \sum_{t=0}^{T-1} \eta_t \alpha_t \Delta_t
    \leq 2 \log_+ \prn*{\tfrac{T}{\delta}} R_0^2.
\end{align*}

\paragraph{Obtaining the result for light-tailed noise.} Let $\mc{G}$ be an unbiased gradient oracle with $\sigma$-sub-Gaussian noise. \citet[Appendix A]{attia2023sgd} show that there exists an unbiased gradient oracle $\tilde{\mc{G}}$ with $\ltsigma$-bounded noise that, with probability at least $1 - \delta$, has the exact same output as $\mc{G}$ throughout the entire algorithm. Therefore, with probability at least $1 - \delta$, we have the same guarantee as when assuming $\ltsigma$-bounded noise. By using a union bound, we get that the desired guarantee holds under $\sigma$-sub-Gaussian noise with probability at least $1 - 2 \delta$. 

\subsection{Clipped Adaptive SGD}
\begin{restatable}{theorem}{restateAdaptiveSGDTheorem}
    \label{thm:adaptive}
    Assume the setting of \Cref{thm:basic} under one of the last 2 rows of \Cref{tab:parameters}. Then with probability at least $1 - 2 \delta$, the optimality gap $f(\bar{x}) - f(x\opt)$ is
    \begin{align*}
        \O*{\frac{\log_+ \prn*{\tfrac{1}{\delta}} \prn*{L_0 R^2 + \oltsigma R \sqrt{T}}}{T}}.
    \end{align*}
\end{restatable}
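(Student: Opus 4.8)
The plan is to follow the skeleton of the proof of \Cref{thm:basic}: reduce to bounded noise (with $\sigma'=\sigma$ under \Cref{assump:bounded_noise} and $\sigma'=\ltsigma$ under \Cref{assump:light-tail}) via the oracle-coupling argument of \citet{attia2023sgd}, work on the probability-one event that every realized noise vector has norm at most $\sigma'$, prove analogs of \Cref{claim:high_prob,claim:progress}, conclude $|\mc{T}_2|\ge T/2$, and finish with Jensen. The only structural change is that the fixed-step-size descent computation behind \Cref{claim:high_prob} is replaced by an AdaGrad-type computation adapted to $\eta_t=R\,(\sum_{i\le t}\alpha_i^2\norm{g_i}^2)^{-1/2}$; and, unlike in \Cref{thm:basic}, the argument uses $\norm{x_t-x\opt}\le R$, which is why $\mathrm{diam}(\xset)\le R$ is assumed.

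First I would prove a deterministic AdaGrad regret bound. Set $G_t=(\sum_{i\le t}\alpha_i^2\norm{g_i}^2)^{1/2}$, so that $1/\eta_t=G_t/R$ is non-decreasing. From the projected-descent inequality $R_{t+1}^2\le R_t^2-2\eta_t\alpha_t\inner{g_t}{x_t-x\opt}+\eta_t^2\alpha_t^2\norm{g_t}^2$, dividing by $2\eta_t$ and summing, bounding $\sum_t(R_t^2-R_{t+1}^2)/(2\eta_t)\le R^2/(2\eta_{T-1})$ by summation by parts (using $R_t\le R$), and using the standard bound $\sum_t\eta_t\alpha_t^2\norm{g_t}^2\le 2R\,G_{T-1}$, I obtain $\sum_t\alpha_t\inner{g_t}{x_t-x\opt}\le \tfrac32 R\,G_{T-1}$, in which the data-dependent $\eta_t$'s are gone from the leading term. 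Since $\alpha_t$ is computed from the first sample $\dbl{g_t}$, it is conditionally independent of $g_t$ given $x_t$, so by convexity $\sum_t\alpha_t\Delta_t\le\sum_t\alpha_t\inner{g_t}{x_t-x\opt}+\sum_tD_t$, where $D_t:=\alpha_t\inner{\grad f(x_t)-g_t}{x_t-x\opt}$ is a martingale difference sequence with $|D_t|\le\alpha_tR\sigma'$ and conditional variance at most $\alpha_t^2R^2{\sigma'}^2$. A Freedman-type bound, using $\sum_t\alpha_t^2\le T$, gives $|\sum_tD_t|\le\O*{R\sigma'\sqrt{T\log(1/\delta)}}$ with probability at least $1-\delta$, so with that probability
\[
  \sum_{t=0}^{T-1}\alpha_t\Delta_t\;\le\;\tfrac32R\,G_{T-1}+\O*{R\sigma'\sqrt{T\log(1/\delta)}}.
\]

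The hard part will be controlling the AdaGrad denominator $G_{T-1}^2=\sum_t\alpha_t^2\norm{g_t}^2$, which is self-referential. I would split clipped and unclipped iterations as in \Cref{thm:basic} and apply the \lzo-inequality $\norm{\grad f(x_t)}^2\le 2(L_0+L_1\norm{\grad f(x_t)})\Delta_t$. On the good event, $\norm{\grad f(x_t)}$, $\norm{\dbl{g_t}}$, and $\norm{g_t}$ agree up to $\sigma'$, which is dominated by the threshold $\clip$ thanks to the lower bound on $\clip$ built into \Cref{tab:parameters} (for the conservative row this also uses the hypothesis on $T$). For $t\in\mc{T}_2$ (so $\alpha_t=1$, $\norm{\dbl{g_t}}<\clip$, hence $\norm{\grad f(x_t)}\lesssim\clip$) the \lzo-inequality together with $L_1\clip\lesssim\max\{L_0,\tfrac{\sqrt{T}}{R}\sigma'\}$ yields $\alpha_t^2\norm{g_t}^2\lesssim(L_0+\tfrac{\sqrt{T}}{R}\sigma')\,\alpha_t\Delta_t+{\sigma'}^2$; for $t\in\mc{T}_1$ (so $\alpha_t\norm{\dbl{g_t}}=\clip$ and $\norm{\grad f(x_t)}\approx\norm{\dbl{g_t}}$) the same inequality gives $\alpha_t^2\norm{g_t}^2\asymp\clip^2\lesssim(L_0+\tfrac{\sqrt{T}}{R}\sigma')\,\alpha_t\Delta_t$, which is simultaneously the clipped-iteration progress bound playing the role of \Cref{claim:progress}. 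Summing, $G_{T-1}^2\lesssim(L_0+\tfrac{\sqrt{T}}{R}\sigma')\sum_t\alpha_t\Delta_t+T{\sigma'}^2$. Plugging this into the displayed bound and writing $A:=\sum_t\alpha_t\Delta_t$ gives $A\lesssim R\sqrt{(L_0+\tfrac{\sqrt{T}}{R}\sigma')A}+R\sqrt{T}\,\sigma'\sqrt{\log(1/\delta)}$, a quadratic inequality in $\sqrt{A}$ whose solution is $A\lesssim L_0R^2+R\sqrt{T}\,\sigma'\sqrt{\log(1/\delta)}$.

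It remains to close the loop exactly as in \Cref{thm:basic}. The clipped-iteration progress bound $\alpha_t\Delta_t\gtrsim\clip^2/(L_0+\tfrac{\sqrt{T}}{R}\sigma')$ together with the bound on $A$ forces $|\mc{T}_1|\le T/2$ — this is where the lower bound on $T$ is used (read with the diameter $R$ in place of $R_0$ for the adaptive rows) — so $|\mc{T}_2|\ge T/2$ and the algorithm returns $\bar x=\tfrac1{|\mc{T}_2|}\sum_{t\in\mc{T}_2}x_t$. Since $\alpha_t=1$ on $\mc{T}_2$, Jensen gives $f(\bar x)-f(x\opt)\le\tfrac1{|\mc{T}_2|}\sum_{t\in\mc{T}_2}\Delta_t\le 2A/T=\O*{\frac{L_0R^2+R\sqrt{T}\,\sigma'\sqrt{\log(1/\delta)}}{T}}$. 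Substituting $\sigma'=\ltsigma$ for the sub-Gaussian case, using $\sqrt{\log(1/\delta)}\le\log_+(1/\delta)$, and taking a union bound over the two $\delta$-events (the Freedman bound and the light-tail coupling) yields the claimed rate with probability at least $1-2\delta$. The conservative-threshold row requires no new argument: its $\clip$ is precisely the value the proof forces once $L_1$ is replaced by the overestimate $\Otil{\sqrt{T}/R}$, so the entire chain of inequalities applies verbatim.
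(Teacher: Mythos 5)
Your proposal is correct and follows essentially the same route as the paper: an AdaGrad-style regret bound on $\sum_t \alpha_t\inner{g_t}{x_t-x^\star}$ (paper's Lemma~\ref{lemma:adaptiveSGD_stepsize}), a martingale-concentration bound on the noise term (paper's Lemma~\ref{lemma:adaptiveSGD_martingale}), the \lzo-inequality to control $\sum_t\alpha_t^2\|g_t\|^2$ and a resulting quadratic inequality in $A=\sum_t\alpha_t\Delta_t$ (paper's Lemma~\ref{lemma:adaptive_algebra}), the clipped-iteration progress bound to force $|\mathcal{T}_1|\le T/2$ (paper's Lemma~\ref{lemma:adaptiveSGD_bound_tau1_suboptimality}), and the \citet{attia2023sgd} bounded-noise coupling with a final union bound. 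The only deviations are cosmetic: you invoke Freedman's inequality where the paper re-uses its fixed-$\lambda$ Bernstein-type Lemma~\ref{lemma:sub_gaussian} (giving $\sqrt{T\log(1/\delta)}$ instead of $\sqrt{T}\log(1/\delta)$ — both suffice for the claimed rate), you split the variance-control by clipped versus unclipped rather than by noise size relative to $\|\nabla f\|$ (equivalent up to constants), and you derive the clipped progress bound $\alpha_t\Delta_t\gtrsim c/L_1$ directly from Lemma~\ref{lemma:bound_on_grad_squared} rather than via the intermediate Lemma~\ref{lemma:bound_on_grad}, which is arguably slightly cleaner.
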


The proof shares the main ideas of the proof of \Cref{thm:basic}. The main difference is that analyses of AdaGrad-like stepsizes handle the stepsize in a very specific manner. In our case, use it we show that
\begin{align*}
    \sum_{t=0}^{T-1} \alpha_t \inner{g_t}{x_t - x\opt}
    \leq 2 R \sqrt{\sum_{i=0}^{T-1} \alpha_i^2 \norm{g_i}^2}.
\end{align*}
Therefore, for the rest of the proof, we analyze $\sum_{t=0}^{T-1} \alpha_t \Delta_t$ instead of $\sum_{t=0}^{T-1} \eta_t \alpha_t \Delta_t$.  
\section{Experiments}\label{sec:experiments}

Our work introduces several non-standard algorithmic choices that facilitate our theoretical analysis. We conduct experiments in order to assess the empirical effect of those choices. Specifically, we aim to shed some light on the following questions:
\begin{enumerate}[leftmargin=*]
    \item Does gradient clipping help in stochastic, convex, \lzo-smooth optimization?
    
    \item Is double-sampling better than single sampling?

    \item Does the average of iterates from $\mc{T}_2$ perform better than the average of all iterates?

    \item How does ``adaptive clipping'' compare to ``standard clipping''? How does it compare to adaptive SGD with no clipping?
\end{enumerate}
We perform linear regression on the California Housing dataset \citep{pace1997sparse} and the Parkinsons Telemonitoring dataset \citep{tsanas2009accurate} (the latter is in \Cref{app:experiments}) using the loss function $f(w) = \norm{Xw - y}^4$. For algorithms with a fixed stepsize, we set $\eta$ to a variable $lr$ which we tune. For algorithms with a time-dependent stepsize, we express $\eta_t$ as a function of the clipping threshold $c$ and multiply the result by a factor of $lr$ which we tune. For each tested method, we tune both $lr$ and $c$ (when applicable) using a two-level, two-dimensional grid search. We defer to \Cref{app:experiments} for additional details on the definitions of $\eta_t$ and the tuning process. We also perform similar synthetic experiments on a function of the form $f(w) = \norm{Aw}^4$ (\Cref{app:experiments}).

\begin{figure}[h] 
    \begin{center}
	\subfloat[]{\includegraphics[width=0.32\textwidth]{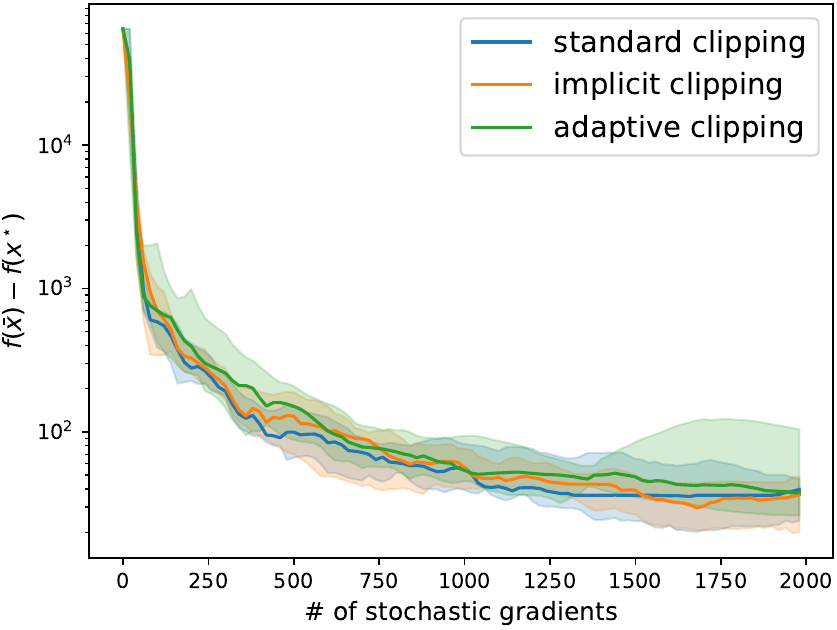} \label{fig:general}}
	\hfill
	\subfloat[]{\includegraphics[width=0.32\textwidth]{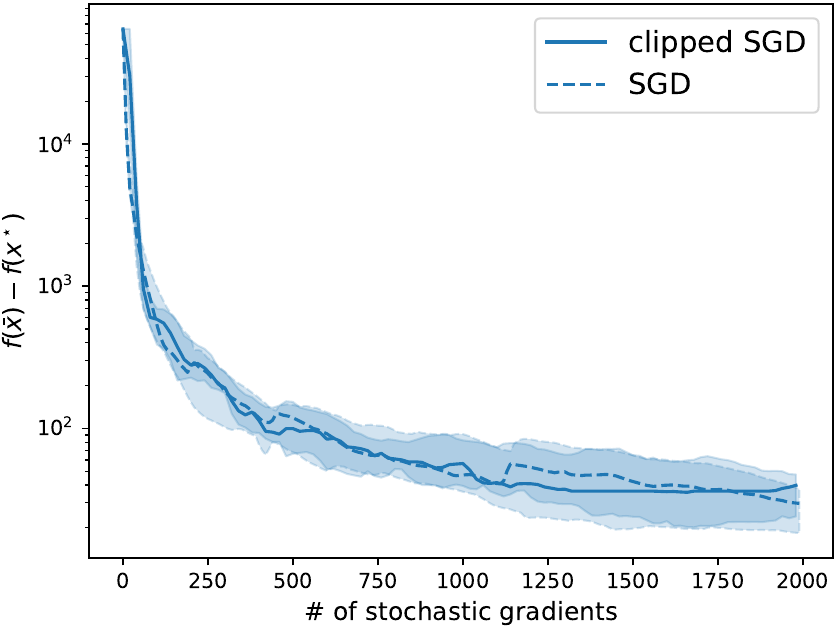} \label{fig:clip_vs_noclip}}
	\hfill
	\subfloat[]{\includegraphics[width=0.32\textwidth]{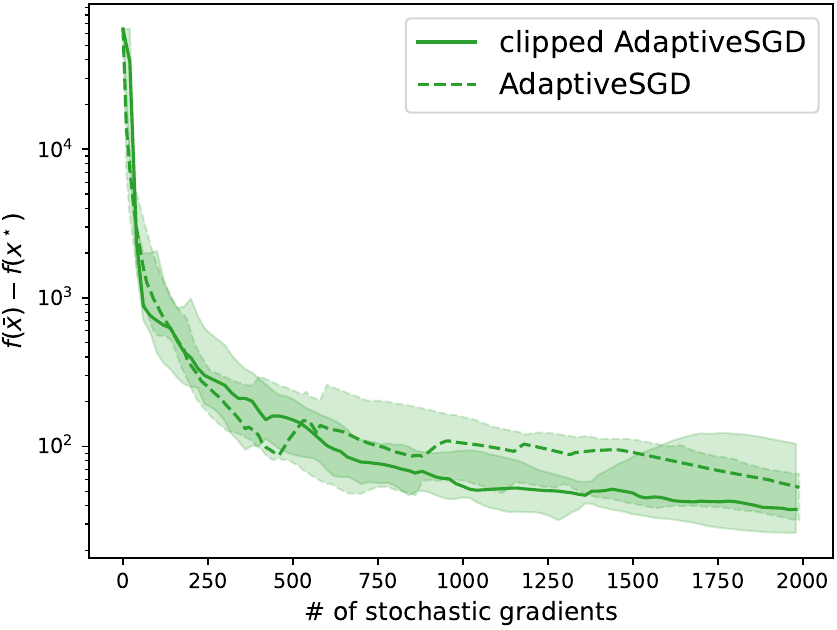} \label{fig:clip_vs_noclip_2}}
	\cprotect\caption{Sub-optimality of SGD variants as a function of the number of stochastic gradients used, when training a quartic-loss linear regression model on the California Housing dataset. We plot the median across 10 runs, with a shaded region showing the inter-quartile range.}
        \label{fig:figure1}
    \end{center}
    \vskip -0.2in
\end{figure}

\paragraph{Comparison of clipping methods.} \Cref{fig:general} compares the output of \Cref{alg:clippedSGDdouble} for the methods of standard, implicit and adaptive clipping (rows 1, 2 and 4 in \Cref{tab:parameters}). The three methods show similar dynamics and converge to a nearly identical optimality gap.

\Cref{fig:clip_vs_noclip,fig:clip_vs_noclip_2} compare SGD, adaptive SGD, \Cref{alg:clippedSGDdouble} with standard clipping (clipped SGD) and \Cref{alg:clippedSGDdouble} with adaptive clipping (clipped adaptive SGD). For SGD and adaptive SGD, we plot the sub-optimality of the average of \textit{all} iterates, set $\alpha_t = 1$ and set $\eta_t$ as in their clipped counterparts. The figure shows overall similar performance. In SGD the clipped method performs a bit worse. In Adaptive SGD the difference between clipping and no clipping is more substantial, in favor of clipping.

\begin{figure}
    \begin{center}
	\subfloat[]{\includegraphics[width=0.32\textwidth]{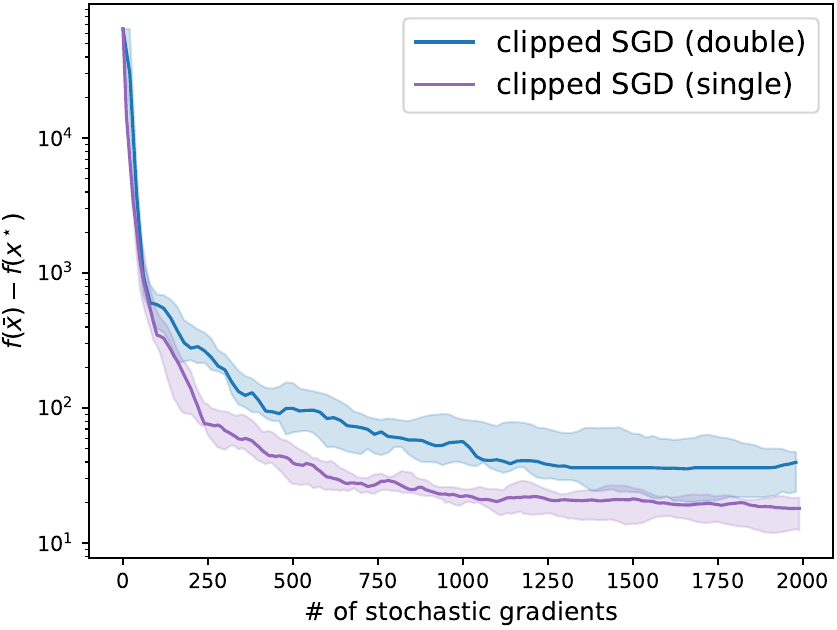} \label{fig:single_vs_double_1}}
	\hfill
	\subfloat[]{\includegraphics[width=0.32\textwidth]{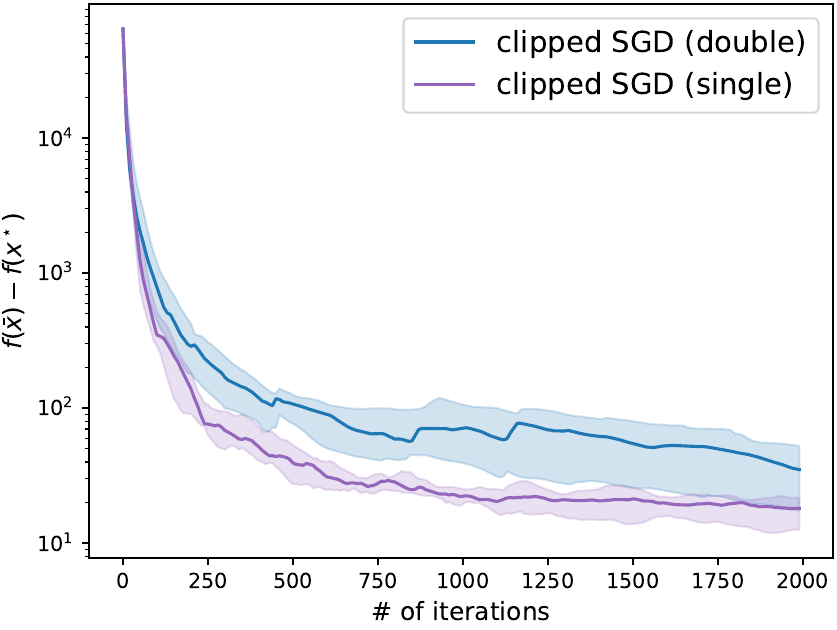} \label{fig:single_vs_double_2}}
	\hfill
	\subfloat[]{\includegraphics[width=0.32\textwidth]{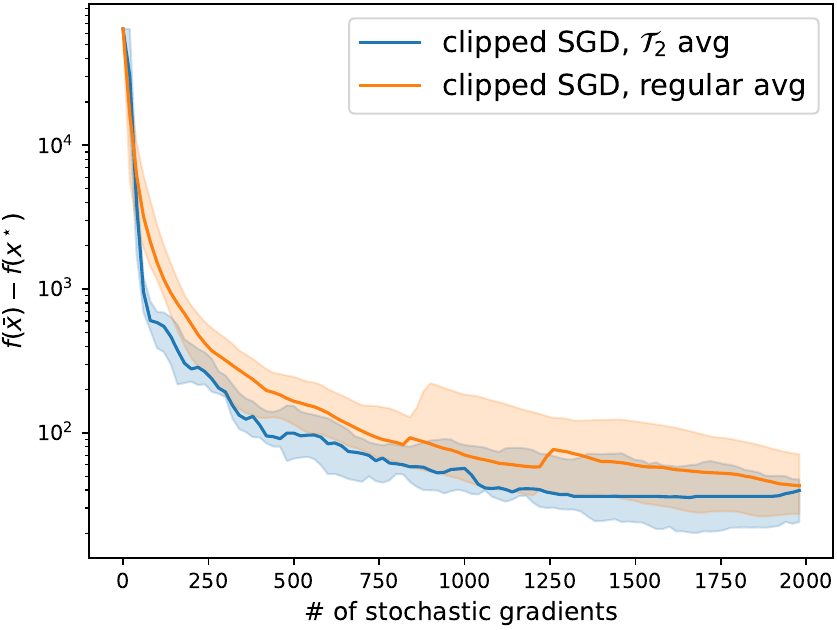} \label{fig:avg}}
	\cprotect\caption{Ablations of \Cref{alg:clippedSGDdouble}. Figures~\ref{fig:single_vs_double_1} and  \ref{fig:single_vs_double_2} compares single and double sampling by plotting sub-optimality as a function of gradient and iteration budget, respectively. Figure \ref{fig:avg} compares different averaging methods. We plot the median across 10 runs and shade the inter-quartile range.}
        \label{fig:figure2}
    \end{center}
    \vskip -0.2in
\end{figure}

\paragraph{Comparison of theory with empirical results.}
We test the effect of our double-sampling approach, as it originated from analytical  considerations and not from practice. \Cref{fig:single_vs_double_1} plots the sub-optimality of standard clipping in two versions: one as presented in the paper, and another that uses a \textit{single} sample in each iteration. Note that the x-axis is the number of stochastic gradients, so the latter version ran for twice as many iterations. There seem to be no advantage to double-sampling, suggesting it might not be necessary in order to prove convergence in the stochastic convex regime. \Cref{fig:single_vs_double_2} plots a comparison in terms of iteration complexity, where single sampling still achieves better sub-optimality. 

We move on to investigate our algorithmic choice of defining the output as $\bar{x} = \frac{1}{|\mc{T}_2|} \sum_{t \in \mc{T}_2} x_i$. \Cref{fig:avg} compares the sub-optimality of $\bar{x}$ to the sub-optimality of the average across all iterates (both are with standard clipping). We see that $\bar{x}$ achieves slightly better sub-optimality, supporting our choice.  
\section{Conclusion}\label{sec:conclusion}

In this paper, we analyze stochastic gradient descent with gradient clipping on convex, \lzo-smooth functions. We prove a high-probability convergence rate for clipped SGD, and introduce a clipped variation of adaptive SGD that has a similar rate. 

There are various possible directions for future work. First, since the double-sampling approach is not supported by empirical data, it is interesting to study convergence without it. Another direction is extending our analysis to a more generalized smoothness assumption such as $\ell$-smoothness \citep{li2024convex}. Lastly, exploring tuning-free methods that require no knowledge on problem parameters could be of both theoretical and practical interest.

\bibliographystyle{abbrvnat}

\clearpage

\neurips{%
\section*{NeurIPS Paper Checklist}

\begin{enumerate}

\item {\bf Claims}
    \item[] Question: Do the main claims made in the abstract and introduction accurately reflect the paper's contributions and scope?
    \item[] Answer: \answerYes{} %
    \item[] Justification: The claims and contributions are stated in the abstract and are detailed in introduction.
    \item[] Guidelines:
    \begin{itemize}
        \item The answer NA means that the abstract and introduction do not include the claims made in the paper.
        \item The abstract and/or introduction should clearly state the claims made, including the contributions made in the paper and important assumptions and limitations. A No or NA answer to this question will not be perceived well by the reviewers. 
        \item The claims made should match theoretical and experimental results, and reflect how much the results can be expected to generalize to other settings. 
        \item It is fine to include aspirational goals as motivation as long as it is clear that these goals are not attained by the paper. 
    \end{itemize}

\item {\bf Limitations}
    \item[] Question: Does the paper discuss the limitations of the work performed by the authors?
    \item[] Answer: \answerYes{} %
    \item[] Justification: We discuss limitations of our assumptions in \Cref{sec:analysis} and the relation between theory and empirical observations in \Cref{sec:experiments}.
    \item[] Guidelines:
    \begin{itemize}
        \item The answer NA means that the paper has no limitation while the answer No means that the paper has limitations, but those are not discussed in the paper. 
        \item The authors are encouraged to create a separate "Limitations" section in their paper.
        \item The paper should point out any strong assumptions and how robust the results are to violations of these assumptions (e.g., independence assumptions, noiseless settings, model well-specification, asymptotic approximations only holding locally). The authors should reflect on how these assumptions might be violated in practice and what the implications would be.
        \item The authors should reflect on the scope of the claims made, e.g., if the approach was only tested on a few datasets or with a few runs. In general, empirical results often depend on implicit assumptions, which should be articulated.
        \item The authors should reflect on the factors that influence the performance of the approach. For example, a facial recognition algorithm may perform poorly when image resolution is low or images are taken in low lighting. Or a speech-to-text system might not be used reliably to provide closed captions for online lectures because it fails to handle technical jargon.
        \item The authors should discuss the computational efficiency of the proposed algorithms and how they scale with dataset size.
        \item If applicable, the authors should discuss possible limitations of their approach to address problems of privacy and fairness.
        \item While the authors might fear that complete honesty about limitations might be used by reviewers as grounds for rejection, a worse outcome might be that reviewers discover limitations that aren't acknowledged in the paper. The authors should use their best judgment and recognize that individual actions in favor of transparency play an important role in developing norms that preserve the integrity of the community. Reviewers will be specifically instructed to not penalize honesty concerning limitations.
    \end{itemize}

\item {\bf Theory assumptions and proofs}
    \item[] Question: For each theoretical result, does the paper provide the full set of assumptions and a complete (and correct) proof?
    \item[] Answer: \answerYes{} %
    \item[] Justification: Our paper details all our assumption and provides detailed proofs.
    \item[] Guidelines:
    \begin{itemize}
        \item The answer NA means that the paper does not include theoretical results. 
        \item All the theorems, formulas, and proofs in the paper should be numbered and cross-referenced.
        \item All assumptions should be clearly stated or referenced in the statement of any theorems.
        \item The proofs can either appear in the main paper or the supplemental material, but if they appear in the supplemental material, the authors are encouraged to provide a short proof sketch to provide intuition. 
        \item Inversely, any informal proof provided in the core of the paper should be complemented by formal proofs provided in appendix or supplemental material.
        \item Theorems and Lemmas that the proof relies upon should be properly referenced. 
    \end{itemize}

    \item {\bf Experimental result reproducibility}
    \item[] Question: Does the paper fully disclose all the information needed to reproduce the main experimental results of the paper to the extent that it affects the main claims and/or conclusions of the paper (regardless of whether the code and data are provided or not)?
    \item[] Answer: \answerYes{} %
    \item[] Justification: We provide the code necessary for reproducing our experiments, and provide details on data preparation and parameter tuning in \Cref{app:experiments}.
    \item[] Guidelines:
    \begin{itemize}
        \item The answer NA means that the paper does not include experiments.
        \item If the paper includes experiments, a No answer to this question will not be perceived well by the reviewers: Making the paper reproducible is important, regardless of whether the code and data are provided or not.
        \item If the contribution is a dataset and/or model, the authors should describe the steps taken to make their results reproducible or verifiable. 
        \item Depending on the contribution, reproducibility can be accomplished in various ways. For example, if the contribution is a novel architecture, describing the architecture fully might suffice, or if the contribution is a specific model and empirical evaluation, it may be necessary to either make it possible for others to replicate the model with the same dataset, or provide access to the model. In general. releasing code and data is often one good way to accomplish this, but reproducibility can also be provided via detailed instructions for how to replicate the results, access to a hosted model (e.g., in the case of a large language model), releasing of a model checkpoint, or other means that are appropriate to the research performed.
        \item While NeurIPS does not require releasing code, the conference does require all submissions to provide some reasonable avenue for reproducibility, which may depend on the nature of the contribution. For example
        \begin{enumerate}
            \item If the contribution is primarily a new algorithm, the paper should make it clear how to reproduce that algorithm.
            \item If the contribution is primarily a new model architecture, the paper should describe the architecture clearly and fully.
            \item If the contribution is a new model (e.g., a large language model), then there should either be a way to access this model for reproducing the results or a way to reproduce the model (e.g., with an open-source dataset or instructions for how to construct the dataset).
            \item We recognize that reproducibility may be tricky in some cases, in which case authors are welcome to describe the particular way they provide for reproducibility. In the case of closed-source models, it may be that access to the model is limited in some way (e.g., to registered users), but it should be possible for other researchers to have some path to reproducing or verifying the results.
        \end{enumerate}
    \end{itemize}

\item {\bf Open access to data and code}
    \item[] Question: Does the paper provide open access to the data and code, with sufficient instructions to faithfully reproduce the main experimental results, as described in supplemental material?
    \item[] Answer: \answerYes{} %
    \item[] Justification: We provide the code, which automatically downloads the data and runs the experiments.
    \item[] Guidelines:
    \begin{itemize}
        \item The answer NA means that paper does not include experiments requiring code.
        \item Please see the NeurIPS code and data submission guidelines (\url{https://nips.cc/public/guides/CodeSubmissionPolicy}) for more details.
        \item While we encourage the release of code and data, we understand that this might not be possible, so “No” is an acceptable answer. Papers cannot be rejected simply for not including code, unless this is central to the contribution (e.g., for a new open-source benchmark).
        \item The instructions should contain the exact command and environment needed to run to reproduce the results. See the NeurIPS code and data submission guidelines (\url{https://nips.cc/public/guides/CodeSubmissionPolicy}) for more details.
        \item The authors should provide instructions on data access and preparation, including how to access the raw data, preprocessed data, intermediate data, and generated data, etc.
        \item The authors should provide scripts to reproduce all experimental results for the new proposed method and baselines. If only a subset of experiments are reproducible, they should state which ones are omitted from the script and why.
        \item At submission time, to preserve anonymity, the authors should release anonymized versions (if applicable).
        \item Providing as much information as possible in supplemental material (appended to the paper) is recommended, but including URLs to data and code is permitted.
    \end{itemize}

\item {\bf Experimental setting/details}
    \item[] Question: Does the paper specify all the training and test details (e.g., data splits, hyperparameters, how they were chosen, type of optimizer, etc.) necessary to understand the results?
    \item[] Answer: \answerYes{} %
    \item[] Justification: Details on the data and parameter tuning is in \Cref{app:experiments}. There is no test set since measuring generalization is irrelevant in this paper.
    \item[] Guidelines:
    \begin{itemize}
        \item The answer NA means that the paper does not include experiments.
        \item The experimental setting should be presented in the core of the paper to a level of detail that is necessary to appreciate the results and make sense of them.
        \item The full details can be provided either with the code, in appendix, or as supplemental material.
    \end{itemize}

\item {\bf Experiment statistical significance}
    \item[] Question: Does the paper report error bars suitably and correctly defined or other appropriate information about the statistical significance of the experiments?
    \item[] Answer: \answerYes{} %
    \item[] Justification: The plots are accompanied by intervals outlining the 25 and 75 percentile.
    \item[] Guidelines:
    \begin{itemize}
        \item The answer NA means that the paper does not include experiments.
        \item The authors should answer "Yes" if the results are accompanied by error bars, confidence intervals, or statistical significance tests, at least for the experiments that support the main claims of the paper.
        \item The factors of variability that the error bars are capturing should be clearly stated (for example, train/test split, initialization, random drawing of some parameter, or overall run with given experimental conditions).
        \item The method for calculating the error bars should be explained (closed form formula, call to a library function, bootstrap, etc.)
        \item The assumptions made should be given (e.g., Normally distributed errors).
        \item It should be clear whether the error bar is the standard deviation or the standard error of the mean.
        \item It is OK to report 1-sigma error bars, but one should state it. The authors should preferably report a 2-sigma error bar than state that they have a 96\% CI, if the hypothesis of Normality of errors is not verified.
        \item For asymmetric distributions, the authors should be careful not to show in tables or figures symmetric error bars that would yield results that are out of range (e.g. negative error rates).
        \item If error bars are reported in tables or plots, The authors should explain in the text how they were calculated and reference the corresponding figures or tables in the text.
    \end{itemize}

\item {\bf Experiments compute resources}
    \item[] Question: For each experiment, does the paper provide sufficient information on the computer resources (type of compute workers, memory, time of execution) needed to reproduce the experiments?
    \item[] Answer: \answerYes{} %
    \item[] Justification: The requirements are listed in \Cref{app:experiments}.
    \item[] Guidelines:
    \begin{itemize}
        \item The answer NA means that the paper does not include experiments.
        \item The paper should indicate the type of compute workers CPU or GPU, internal cluster, or cloud provider, including relevant memory and storage.
        \item The paper should provide the amount of compute required for each of the individual experimental runs as well as estimate the total compute. 
        \item The paper should disclose whether the full research project required more compute than the experiments reported in the paper (e.g., preliminary or failed experiments that didn't make it into the paper). 
    \end{itemize}
    
\item {\bf Code of ethics}
    \item[] Question: Does the research conducted in the paper conform, in every respect, with the NeurIPS Code of Ethics \url{https://neurips.cc/public/EthicsGuidelines}?
    \item[] Answer: \answerYes{} %
    \item[] Justification: The paper follows the NeurIPS Code of Ethics.
    \item[] Guidelines:
    \begin{itemize}
        \item The answer NA means that the authors have not reviewed the NeurIPS Code of Ethics.
        \item If the authors answer No, they should explain the special circumstances that require a deviation from the Code of Ethics.
        \item The authors should make sure to preserve anonymity (e.g., if there is a special consideration due to laws or regulations in their jurisdiction).
    \end{itemize}

\item {\bf Broader impacts}
    \item[] Question: Does the paper discuss both potential positive societal impacts and negative societal impacts of the work performed?
    \item[] Answer: \answerNA{} %
    \item[] Justification: There ar eno societal impacts of the work in this paper.
    \item[] Guidelines:
    \begin{itemize}
        \item The answer NA means that there is no societal impact of the work performed.
        \item If the authors answer NA or No, they should explain why their work has no societal impact or why the paper does not address societal impact.
        \item Examples of negative societal impacts include potential malicious or unintended uses (e.g., disinformation, generating fake profiles, surveillance), fairness considerations (e.g., deployment of technologies that could make decisions that unfairly impact specific groups), privacy considerations, and security considerations.
        \item The conference expects that many papers will be foundational research and not tied to particular applications, let alone deployments. However, if there is a direct path to any negative applications, the authors should point it out. For example, it is legitimate to point out that an improvement in the quality of generative models could be used to generate deepfakes for disinformation. On the other hand, it is not needed to point out that a generic algorithm for optimizing neural networks could enable people to train models that generate Deepfakes faster.
        \item The authors should consider possible harms that could arise when the technology is being used as intended and functioning correctly, harms that could arise when the technology is being used as intended but gives incorrect results, and harms following from (intentional or unintentional) misuse of the technology.
        \item If there are negative societal impacts, the authors could also discuss possible mitigation strategies (e.g., gated release of models, providing defenses in addition to attacks, mechanisms for monitoring misuse, mechanisms to monitor how a system learns from feedback over time, improving the efficiency and accessibility of ML).
    \end{itemize}
    
\item {\bf Safeguards}
    \item[] Question: Does the paper describe safeguards that have been put in place for responsible release of data or models that have a high risk for misuse (e.g., pretrained language models, image generators, or scraped datasets)?
    \item[] Answer: \answerNA{} %
    \item[] Justification: The algorithms and code we provide pose no risk of misuse.
    \item[] Guidelines:
    \begin{itemize}
        \item The answer NA means that the paper poses no such risks.
        \item Released models that have a high risk for misuse or dual-use should be released with necessary safeguards to allow for controlled use of the model, for example by requiring that users adhere to usage guidelines or restrictions to access the model or implementing safety filters. 
        \item Datasets that have been scraped from the Internet could pose safety risks. The authors should describe how they avoided releasing unsafe images.
        \item We recognize that providing effective safeguards is challenging, and many papers do not require this, but we encourage authors to take this into account and make a best faith effort.
    \end{itemize}

\item {\bf Licenses for existing assets}
    \item[] Question: Are the creators or original owners of assets (e.g., code, data, models), used in the paper, properly credited and are the license and terms of use explicitly mentioned and properly respected?
    \item[] Answer: \answerYes{} %
    \item[] Justification: The relevant information is in \Cref{app:experiments}.
    \item[] Guidelines:
    \begin{itemize}
        \item The answer NA means that the paper does not use existing assets.
        \item The authors should cite the original paper that produced the code package or dataset.
        \item The authors should state which version of the asset is used and, if possible, include a URL.
        \item The name of the license (e.g., CC-BY 4.0) should be included for each asset.
        \item For scraped data from a particular source (e.g., website), the copyright and terms of service of that source should be provided.
        \item If assets are released, the license, copyright information, and terms of use in the package should be provided. For popular datasets, \url{paperswithcode.com/datasets} has curated licenses for some datasets. Their licensing guide can help determine the license of a dataset.
        \item For existing datasets that are re-packaged, both the original license and the license of the derived asset (if it has changed) should be provided.
        \item If this information is not available online, the authors are encouraged to reach out to the asset's creators.
    \end{itemize}

\item {\bf New assets}
    \item[] Question: Are new assets introduced in the paper well documented and is the documentation provided alongside the assets?
    \item[] Answer: \answerYes{} %
    \item[] Justification: The code is the only asset we provide, and it is documented.
    \item[] Guidelines:
    \begin{itemize}
        \item The answer NA means that the paper does not release new assets.
        \item Researchers should communicate the details of the dataset/code/model as part of their submissions via structured templates. This includes details about training, license, limitations, etc. 
        \item The paper should discuss whether and how consent was obtained from people whose asset is used.
        \item At submission time, remember to anonymize your assets (if applicable). You can either create an anonymized URL or include an anonymized zip file.
    \end{itemize}

\item {\bf Crowdsourcing and research with human subjects}
    \item[] Question: For crowdsourcing experiments and research with human subjects, does the paper include the full text of instructions given to participants and screenshots, if applicable, as well as details about compensation (if any)? 
    \item[] Answer: \answerNA{} %
    \item[] Justification: The paper does not involve crowdsourcing nor research with human subjects.
    \item[] Guidelines:
    \begin{itemize}
        \item The answer NA means that the paper does not involve crowdsourcing nor research with human subjects.
        \item Including this information in the supplemental material is fine, but if the main contribution of the paper involves human subjects, then as much detail as possible should be included in the main paper. 
        \item According to the NeurIPS Code of Ethics, workers involved in data collection, curation, or other labor should be paid at least the minimum wage in the country of the data collector. 
    \end{itemize}

\item {\bf Institutional review board (IRB) approvals or equivalent for research with human subjects}
    \item[] Question: Does the paper describe potential risks incurred by study participants, whether such risks were disclosed to the subjects, and whether Institutional Review Board (IRB) approvals (or an equivalent approval/review based on the requirements of your country or institution) were obtained?
    \item[] Answer: \answerNA{} %
    \item[] Justification: The paper does not involve crowdsourcing nor research with human subjects.
    \item[] Guidelines:
    \begin{itemize}
        \item The answer NA means that the paper does not involve crowdsourcing nor research with human subjects.
        \item Depending on the country in which research is conducted, IRB approval (or equivalent) may be required for any human subjects research. If you obtained IRB approval, you should clearly state this in the paper. 
        \item We recognize that the procedures for this may vary significantly between institutions and locations, and we expect authors to adhere to the NeurIPS Code of Ethics and the guidelines for their institution. 
        \item For initial submissions, do not include any information that would break anonymity (if applicable), such as the institution conducting the review.
    \end{itemize}

\item {\bf Declaration of LLM usage}
    \item[] Question: Does the paper describe the usage of LLMs if it is an important, original, or non-standard component of the core methods in this research? Note that if the LLM is used only for writing, editing, or formatting purposes and does not impact the core methodology, scientific rigorousness, or originality of the research, declaration is not required.
    \item[] Answer: \answerNA{} %
    \item[] Justification: LLMs were used only for writing, editing or formatting purposes.
    \item[] Guidelines:
    \begin{itemize}
        \item The answer NA means that the core method development in this research does not involve LLMs as any important, original, or non-standard components.
        \item Please refer to our LLM policy (\url{https://neurips.cc/Conferences/2025/LLM}) for what should or should not be described.
    \end{itemize}

\end{enumerate}
}

\newpage
\appendix
\onecolumn

\section{Lemmas for \lzoTitle-Smooth Functions}

\begin{lemma}
    \label{lemma:bound_on_grad_squared}
    Let $f : \R^d \to \R$ and suppose \Cref{assump:lzo} holds. Then for any $x \in \R^d$,
    \begin{align*}
        \norm{\grad f(x)}^2 \leq 2 \prn*{L_0 + L_1 \norm{\grad f(x)}} \prn*{f(x) - f(x\opt)}.
    \end{align*}
\end{lemma}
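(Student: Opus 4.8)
The plan is to lower-bound $f(x) - f(x\opt)$ by following the continuous-time gradient flow started at $x$ and integrating the decrease of $f$ along it. Fix $x \in \xset$; if $\grad f(x) = 0$ the inequality is the trivial $0 \le 0$, so assume $G := \norm{\grad f(x)} > 0$ (which forces $L := L_0 + L_1 G > 0$, since otherwise $L_1 = 0$ and the convex $f$ would be affine, hence constant) and let $x(\cdot)$ solve $\dot x(t) = -\grad f(x(t))$, $x(0) = x$. First I would check the flow is defined for all $t \ge 0$: convexity gives $\frac{d}{dt}\tfrac12\norm{x(t)-x\opt}^2 = -\inner{\grad f(x(t))}{x(t)-x\opt} \le -(f(x(t)) - f(x\opt)) \le 0$, so the trajectory stays in the ball $B := \crl{y : \norm{y - x\opt} \le \norm{x - x\opt}}$, on which $\grad f$ is Lipschitz (by \Cref{def:lzo_smoothness} and continuity of $\grad f$, $\norm{\hess f}$ is bounded on $B$), so no finite-time blow-up occurs.

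The crux is that the gradient norm is non-increasing along the flow: since $f$ is convex and twice differentiable, $\hess f \succeq 0$, hence $\frac{d}{dt}\norm{\grad f(x(t))}^2 = -2\inner{\grad f(x(t))}{\hess f(x(t)) \grad f(x(t))} \le 0$, so $\norm{\grad f(x(t))} \le G$ for all $t \ge 0$. Therefore \Cref{def:lzo_smoothness} gives $\norm{\hess f(x(t))} \le L_0 + L_1\norm{\grad f(x(t))} \le L$ along the entire trajectory — informally, the flow never leaves the region where $f$ behaves like an $L$-smooth function. Feeding this back into the same derivative computation, $\frac{d}{dt}\norm{\grad f(x(t))}^2 \ge -2L\norm{\grad f(x(t))}^2$, so by Gr\"onwall's inequality $\norm{\grad f(x(t))}^2 \ge G^2 e^{-2Lt}$.

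To conclude, I would use $\frac{d}{dt} f(x(t)) = -\norm{\grad f(x(t))}^2$. Integrating the earlier distance inequality shows $\int_0^\infty (f(x(t)) - f(x\opt))\,dt \le \tfrac12\norm{x-x\opt}^2 < \infty$, and since $t \mapsto f(x(t)) - f(x\opt)$ is nonnegative and non-increasing this forces $f(x(t)) \to f(x\opt)$; hence $\int_0^\infty \norm{\grad f(x(t))}^2\,dt = f(x) - f(x\opt)$. Combining with the exponential lower bound, $f(x) - f(x\opt) \ge G^2 \int_0^\infty e^{-2Lt}\,dt = \tfrac{G^2}{2L}$, which rearranges to $\norm{\grad f(x)}^2 \le 2\prn{L_0 + L_1 \norm{\grad f(x)}}\prn{f(x) - f(x\opt)}$.

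I expect the main obstacle to be the analytic bookkeeping for the gradient flow — global existence, confinement to $B$, and the convergence $f(x(t)) \to f(x\opt)$ — rather than any sharp inequality; the substantive content (gradient norm monotone along the flow, so the ``effective smoothness'' is pinned at $L_0 + L_1\norm{\grad f(x)}$) is one line. If one prefers to avoid continuous time, the same idea runs with gradient descent at step size $1/(2L)$, but then showing the iterate gradient norms stay below $G$ requires a more delicate descent argument because the effective smoothness changes between steps, so I would keep the flow version.
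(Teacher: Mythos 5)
Your proof is correct and takes a genuinely different route from the paper. The paper simply cites \citet[Lemma A.2]{koloskova2023revisiting}, whose argument (via \citet[Lemma A.3]{zhang2020improved}) is a \emph{discrete, convexity-free} one: take a single gradient step $y = x - \eta\grad f(x)$ with $\eta$ scaled by $1/(L_0 + L_1\norm{\grad f(x)})$, bound the Hessian along the line segment $[x,y]$ by a Gr\"onwall estimate that only uses \lzo-smoothness and $\norm{y-x}\le 1/L_1$, and invoke the Taylor/descent lemma. That argument is more general (it holds for non-convex $f$, since the inequality is really a generalized PL-type bound that has nothing to do with convexity), but it costs some bookkeeping to control the ``effective smoothness'' along the segment. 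Your continuous-time argument instead exploits \emph{convexity}: $\hess f\succeq 0$ makes $\norm{\grad f(x(t))}$ monotone non-increasing along gradient flow, which pins the Hessian norm at $L_0 + L_1\norm{\grad f(x)}$ for all time, and then a one-line Gr\"onwall lower bound plus $\int_0^\infty\norm{\grad f(x(t))}^2\,dt = f(x)-f(x\opt)$ delivers the constant $2$ cleanly. So the trade-off is: your proof is self-contained and arguably conceptually cleaner (the gradient-norm monotonicity is exactly the structural fact that makes the bound tight), but it needs convexity, whereas the paper's cited proof does not; within the convex setting of this paper both are fine. Two small nits: (a) the degenerate case you dismiss actually requires $L_0 = L_1 = 0$ jointly (you only mention $L_1=0$), though the conclusion --- $\hess f\equiv 0$, $f$ affine, hence constant once it attains a minimum --- is the same; (b) like the paper's proof, yours implicitly assumes the gradient-flow trajectory (respectively, the trial point $y$) remains in $\xset$ and that $x\opt$ is an unconstrained stationary point, which is the intended reading but is not spelled out by the statement $f:\xset\to\R$.
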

\noindent 
\citet[Lemma A.2]{koloskova2023revisiting} prove this by simply using \citet[Lemma A.3]{zhang2020improved}, which assumes the \lzo-smoothness definition we use.

\begin{lemma}
    \label{lemma:bound_on_grad}
    Let $f : \R^d \to \R$ and suppose \Cref{assump:lzo} holds. Then for any $x \in \R^d$,
    \begin{align*}
        \norm{\grad f(x)} \leq \max \crl*{3 L_1 (f(x) - f(x\opt)), 6 \frac{L_0}{L_1}}.
    \end{align*}
\end{lemma}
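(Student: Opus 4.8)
The plan is to reduce everything to the quadratic inequality provided by Lemma~\ref{lemma:bound_on_grad_squared} and then split into two cases according to the size of $\norm{\grad f(x)}$ relative to $L_0/L_1$. Introduce the shorthand $g := \norm{\grad f(x)}$ and $\Delta := f(x) - f(x\opt)$, which is nonnegative because $x\opt$ minimizes $f$ (Assumption~\ref{assump:convex}). If $g = 0$ the claimed bound is immediate, so assume $g > 0$. Lemma~\ref{lemma:bound_on_grad_squared} then reads $g^2 \le 2 L_0 \Delta + 2 L_1 \Delta g$.

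First I would dispose of the easy case: if $g \le 6 L_0 / L_1$, then trivially $g \le \max\{3 L_1 \Delta,\, 6 L_0/L_1\}$ and there is nothing to prove. So I would assume $g > 6 L_0 / L_1$, which is exactly the statement $L_0 < L_1 g / 6$. Substituting this into the first term on the right-hand side of the quadratic inequality gives
\begin{align*}
    g^2 \le 2 L_0 \Delta + 2 L_1 \Delta g < \tfrac{1}{3} L_1 \Delta g + 2 L_1 \Delta g = \tfrac{7}{3} L_1 \Delta g.
\end{align*}
Dividing through by $g > 0$ yields $g < \tfrac{7}{3} L_1 \Delta \le 3 L_1 \Delta$, which again lies below the stated maximum. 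Combining the two cases completes the argument.

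There is no real obstacle here: the proof is a one-line case analysis once Lemma~\ref{lemma:bound_on_grad_squared} is in hand. The only points requiring a little care are (i) handling the degenerate case $g=0$ so that the division step is legitimate, and (ii) checking that the threshold constant $6$ in $6 L_0/L_1$ is chosen large enough that the resulting constant $\tfrac{7}{3}$ is dominated by the advertised constant $3$ in $3 L_1 \Delta$ — any threshold strictly larger than $4$ would work, and $6$ leaves comfortable slack.
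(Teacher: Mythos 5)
Your proof is correct, and it takes a genuinely different route from the paper. The paper's argument treats $\norm{\grad f(x)}^2 - 2L_1\Delta\norm{\grad f(x)} - 2L_0\Delta \le 0$ as a quadratic in $\norm{\grad f(x)}$, bounds it by the larger root $L_1\Delta + \sqrt{L_1^2\Delta^2 + 2L_0\Delta}$, loosens this via $\sqrt{a+b}\le\sqrt{a}+\sqrt{b}$ to $2L_1\Delta + \sqrt{2L_0\Delta}$, and then splits on whether $L_1^2\Delta^2 \ge 2L_0\Delta$ to fold the surviving term into the appropriate branch of the max. Your version instead splits directly on whether $g := \norm{\grad f(x)}$ exceeds $6L_0/L_1$: when it does, you use $L_0 < L_1 g/6$ to absorb the constant term $2L_0\Delta$ into a multiple of $L_1\Delta g$, so you can divide by $g$ and get $g < \tfrac{7}{3}L_1\Delta$ in one step. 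This avoids the quadratic formula and the square-root subadditivity entirely, and actually yields a slightly sharper constant ($\tfrac{7}{3}$ rather than $3$) in the large-gradient branch, though both proofs only need $3$ in the stated bound. One minor point: the nonnegativity of $\Delta$ (which you invoke to justify the final $\tfrac{7}{3}L_1\Delta \le 3L_1\Delta$ comparison) comes from Assumption~\ref{assump:convex}, which the lemma statement does not explicitly list; the paper has the same implicit dependence, so this is not a defect of your proof so much as of the lemma's hypotheses.
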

\begin{proof}
    Denote $\Delta := f(x) - f(x\opt)$. \Cref{lemma:bound_on_grad_squared} shows that
        \begin{align*}
        \norm{\grad f(x)}^2 - 2 L_1 \Delta \norm{\grad f(x)} - 2 L_0 \Delta \leq 0.
    \end{align*}
    This is a quadratic inequality in $\norm{\grad f(x)}$. Since $L_0, L_1, \Delta$ and $\norm{\grad f(x)}$ are non-negative, the solution is less than the parabola's largest root. Therefore, 
    \begin{align*}
        \norm{\grad f(x)} 
        \leq \frac{1}{2} \prn*{2 L_1 \Delta + \sqrt{4 L_1^2 \Delta^2 + 8 L_0 \Delta}} 
        =    L_1 \Delta + \sqrt{L_1^2 \Delta^2 + 2 L_0 \Delta}
        \leq 2 L_1 \Delta + \sqrt{2 L_0 \Delta}.
    \end{align*}
    If $L_1^2 \Delta^2 \geq 2 L_0 \Delta$ then we get
        \begin{align*}
        \norm{\grad f(x)} 
        \leq 2 L_1 \Delta + \sqrt{L_1^2 \Delta^2}
        = 3 L_1 \Delta.
    \end{align*}
    If $L_1^2 \Delta^2 < 2 L_0 \Delta$: Without loss of generality, we assume $\Delta > 0$ (since for $\Delta=0$ the result is immediate from \Cref{lemma:bound_on_grad_squared}), and therefore $\Delta < 2 L_0 / L_1^2$. Consequently,
        \begin{align*}
        \norm{\grad f(x)} 
        \leq 2 \sqrt{2 L_0 \Delta} + \sqrt{2 L_0 \Delta}
        =    3 \sqrt{2 L_0 \Delta}
        \leq 3 \sqrt{4 L_0^2 / L_1^2}
        = 6 L_0 / L_1.
    \end{align*}
    Overall, we have
    \begin{align*}
        \norm{\grad f(x_t)} \leq \max \crl*{ 3 L_1 \Delta, 6 \frac{L_0}{L_1}} 
    \end{align*}
\end{proof}

\section{Lemmas on Probability}

To achieve high probability bounds, we use the following concentration inequality, which is a corollary of  \citet[Lemma 1]{li2020high}.
\begin{lemma}
	\label{lemma:sub_gaussian}
	Assume that $Z_1, Z_2, ..., Z_T$ is a martingale difference sequence with respect to $\xi_1, \xi_2, ..., \xi_T$ (i.e., $\Ex{Z_t|\xi_1,\ldots,\xi_{t-1}}=0$) and that $|Z_t| \leq \sigma_t$ for all $1\leq t \leq T$, where $\sigma_t$ is a sequence of random variables such that $\sigma_t$ is measurable with respect to $\xi_1, \xi_2, \dots, \xi_{t-1}$. 
	Then, for any fixed $\lambda > 0$ and $\delta \in (0,1)$, with probability at least $1-\delta$, we have 
	\begin{align*}
		\sum_{t=1}^T Z_t \leq \frac{3}{4} \lambda \sum_{t=1}^T \sigma_t^2 + \frac{1}{\lambda} \ln \frac{1}{\delta}~.
	\end{align*}
\end{lemma}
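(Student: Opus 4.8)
The plan is to obtain the statement as an immediate corollary of the conditionally sub-Gaussian concentration bound of \citet[Lemma 1]{li2020high}, whose hypotheses differ from ours only in that it assumes the weaker increment condition $\Ex*{\exp(Z_t^2/\sigma_t^2)\mid\xi_1,\dots,\xi_{t-1}}\le\exp(1)$ in place of the almost-sure bound $|Z_t|\le\sigma_t$, while keeping the same martingale-difference structure, the same $\xi_{1:t-1}$-measurability of $\sigma_t$, the same fixed $\lambda>0$, and reaching exactly the conclusion $\sum_{t=1}^T Z_t\le\tfrac34\lambda\sum_{t=1}^T\sigma_t^2+\tfrac1\lambda\ln\tfrac1\delta$. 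So the entire task reduces to checking that our stronger hypotheses imply theirs.

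\paragraph{Key steps.} First I would note that $\{Z_t\}$ is a martingale difference sequence with respect to $\{\xi_t\}$ by assumption, and that $\sigma_t$ is measurable with respect to $\xi_1,\dots,\xi_{t-1}$ by assumption — both of these are hypotheses of \citet[Lemma 1]{li2020high} verbatim. Second, I would verify the sub-Gaussian moment condition: since $|Z_t|\le\sigma_t$ holds almost surely, we have $Z_t^2/\sigma_t^2\le1$ and hence $\exp(Z_t^2/\sigma_t^2)\le\exp(1)$ pointwise, so taking conditional expectations preserves the bound. The only minor bookkeeping point is the degenerate case $\sigma_t=0$, which forces $Z_t=0$ and contributes nothing; this is handled either by dropping such indices or by replacing $\sigma_t$ with $\max\{\sigma_t,\veps\}$ and letting $\veps\downarrow0$. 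Third, I would invoke \citet[Lemma 1]{li2020high} with this choice of $\{\sigma_t\}$ to conclude.

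\paragraph{Main obstacle.} There is essentially no obstacle: the derivation is a direct hypothesis check, and the constant $\tfrac34$ is simply inherited from the cited lemma. If one preferred a self-contained argument instead, the only ``work'' would be the standard exponential-supermartingale estimate, which in fact improves the constant to $\tfrac12$: Hoeffding's lemma applied conditionally gives $\Ex*{\exp(\lambda Z_t)\mid\xi_1,\dots,\xi_{t-1}}\le\exp(\lambda^2\sigma_t^2/2)$, so $M_s:=\exp\bigl(\sum_{r\le s}(\lambda Z_r-\tfrac12\lambda^2\sigma_r^2)\bigr)$ is a nonnegative supermartingale with $\Ex*{M_s}\le1$, and Markov's inequality on $M_T$ yields $\Pr*{\sum_{t=1}^T Z_t>\tfrac{\lambda}{2}\sum_{t=1}^T\sigma_t^2+\tfrac1\lambda\ln\tfrac1\delta}\le\delta$. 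I would nonetheless state the corollary in the $\tfrac34$ form taken from \citet[Lemma 1]{li2020high}, since that is the version — stated for genuinely sub-Gaussian rather than bounded increments — that is reused elsewhere in the analysis, in particular in the light-tailed reduction.
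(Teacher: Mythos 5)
Your proposal is correct and follows exactly the paper's approach: the paper proves the lemma simply by noting it is a corollary of \citet[Lemma 1]{li2020high}, and your proof makes that reduction explicit by verifying that the almost-sure bound $|Z_t|\le\sigma_t$ implies the exponential-moment hypothesis $\Ex*{\exp(Z_t^2/\sigma_t^2)\mid\xi_1,\dots,\xi_{t-1}}\le\exp(1)$. Your observation that a direct Hoeffding-lemma/supermartingale argument would tighten the constant from $\tfrac34$ to $\tfrac12$ is also correct, though, as you note, the paper deliberately keeps the $\tfrac34$ form to match the sub-Gaussian version it reuses.
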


To handle the light tails assumption, we use a reduction from sub-Gaussian noise to bounded noise presented by \citet[Appendix A]{attia2023sgd}. The reduction can be formally stated as follows.
\begin{lemma}
    \label{lemma:ltreduction}
    Let $\mc{G}$ be an unbiased oracle satisfying \Cref{assump:light-tail}. Then for any $x_0,...,x_{T-1}$ and any $\delta \in (0,1)$, there exists an unbiased oracle $\tilde{\mc{G}}$ such that
    \begin{enumerate}[nosep,label=(\roman*)]
        \item $\tilde{\mc{G}}$ satisfies \Cref{assump:bounded_noise} for $\tilde{\sigma} := 3 \sigma \sqrt{\log \prn*{\frac{T}{\delta}}}$.

        \item With probability at least $1 - \delta$, for all $t = 0,\ldots,T-1$ it holds that $\tilde{\mc{G}}(x_t) = \mc{G}(x_t)$.
    \end{enumerate}
\end{lemma}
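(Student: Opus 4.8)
The plan is to build $\tilde{\mc{G}}$ as a coupling of $\mc{G}$: query the two oracles with a shared noise seed, and let $\tilde{\mc{G}}$ echo $\mc{G}$ unless a rare ``correction event'' fires, in which case it returns a fixed bounded vector chosen to keep the oracle unbiased. Fix a query point $x$, write $Z := \mc{G}(x) - \grad f(x)$ for the (mean-zero, $\sigma$-sub-Gaussian) noise, set $q := \tfrac{\delta}{2T}$, draw an independent $B \sim \bernoulli(q)$, and let the correction event be $E := \{\norm{Z} > \tilde\sigma\} \cup \{B = 1\}$. Define $\tilde{\mc{G}}(x) := \grad f(x) + Z$ on $E^c$ and $\tilde{\mc{G}}(x) := \grad f(x) + \mu_x$ on $E$, where $\mu_x$ is pinned down by unbiasedness. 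Since $B$ is independent of $Z$ and $\Ex{Z\indic{\norm{Z}\le\tilde\sigma}} = -b_x$ with $b_x := \Ex{Z\indic{\norm{Z}>\tilde\sigma}}$, one computes $\Ex{\tilde{\mc{G}}(x)} = \grad f(x) - (1-q)b_x + \mu_x \pi_x$, where $\pi_x := \Pr{E} = 1 - (1-q)\Pr{\norm{Z}\le\tilde\sigma}$; hence the choice $\mu_x := (1-q)b_x/\pi_x$ makes $\tilde{\mc{G}}$ unbiased.

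The crux is that $\tilde{\mc{G}}$ stays bounded. On $E^c$ this is immediate: $\norm{\tilde{\mc{G}}(x) - \grad f(x)} = \norm{Z} \le \tilde\sigma$ by the definition of $E$. On $E$ we must show $\norm{\mu_x} \le \tilde\sigma$, and here \Cref{assump:light-tail} enters twice. Markov's inequality applied to $\exp(\norm{Z}^2/\sigma^2)$ gives the tail bound $p_x := \Pr{\norm{Z} > \tilde\sigma} \le \exp(1 - \tilde\sigma^2/\sigma^2) = e\,(\delta/T)^{9}$, and $e^y \ge 1+y$ gives the second-moment bound $\Ex{\norm{Z}^2} \le (e-1)\sigma^2$. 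Since $\pi_x \ge q$ we get $\norm{\mu_x} \le \norm{b_x}/q$, and by Cauchy--Schwarz $\norm{b_x} \le \sqrt{\Ex{\norm{Z}^2}\,p_x} \le \sqrt{e-1}\,\sigma\sqrt{p_x}$; plugging in $p_x \le e(\delta/T)^{9}$ and $q = \tfrac{\delta}{2T}$ shows $\norm{\mu_x} \le \tilde\sigma$ for $T,\delta$ in the relevant range (the constant $3$ in $\tilde\sigma$ is chosen with this margin in mind). This establishes item (i): $\tilde{\mc{G}}$ is unbiased and satisfies \Cref{assump:bounded_noise} with parameter $\tilde\sigma$.

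For item (ii), note $\tilde{\mc{G}}(x) \ne \mc{G}(x)$ only on $E$, and $\Pr{E} = \pi_x \le p_x + q \le \delta/T$. Realizing $\tilde{\mc{G}}$ on the product of $\mc{G}$'s probability space with an i.i.d.\ sequence of Bernoulli variables (one per query), a union bound over the $T$ queries $x_0,\dots,x_{T-1}$ yields $\Pr{\exists t \colon \tilde{\mc{G}}(x_t)\ne\mc{G}(x_t)} \le \delta$. (In \Cref{alg:clippedSGDdouble} there are two oracle calls per step; one then runs the construction with $2T$ and $\delta/2$ in place of $T$ and $\delta$, which only affects absolute constants.) On the complementary event the two oracles agree at every query, hence produce identical algorithm trajectories --- this is precisely what lets one transfer the bounded-noise analysis (e.g.\ \Cref{claim:high_prob}) to the sub-Gaussian setting.

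The main obstacle is exactly the boundedness of the correction. Plain truncation of $Z$ at $\tilde\sigma$ is biased, and the naive debiasing --- recentering the truncated variable by its conditional mean $\Ex{Z \mid \norm{Z} > \tilde\sigma}$ --- can have norm far exceeding $\tilde\sigma$, since a $\sigma$-sub-Gaussian variable may place probability $q$ at radius $\sim\sigma\sqrt{\log(1/q)}$, which blows up as $q \to 0$. Routing the debiasing mass through an \emph{independent} Bernoulli event of probability $\asymp \delta/T$ caps the required shift at $\norm{b_x}/q$; the remaining work is to check that the truncation bias $\norm{b_x}$, which is of order $\sigma\sqrt{p_x}$ with $p_x \le e(\delta/T)^{9}$, comfortably fits under this cap. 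Everything else --- the unbiasedness algebra, the union bound, and the coupling --- is routine.
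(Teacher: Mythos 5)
The paper itself does not prove this lemma; it offloads the proof to a citation of Attia and Koren (Appendix A). So there is no internal proof to compare against, and the value of your write-up is precisely that it supplies a self-contained construction. With that said, your argument is essentially sound and captures the right idea. The key difficulty you correctly identify is that naive truncation of $Z$ is biased, and the naive debiasing $\Ex{Z \mid \norm{Z}>\tilde\sigma}$ can itself have norm well above $\tilde\sigma$; routing the debiasing mass through an \emph{independent} Bernoulli event of probability $q \asymp \delta/T$ caps the correction at $\norm{b_x}/q$, and the Cauchy--Schwarz$/$Markov estimates then show this is tiny. The unbiasedness algebra (using independence of $B$ from $Z$ and $\Ex{Z\indic{\norm{Z}\le\tilde\sigma}} = -b_x$), the second-moment bound $\Ex{\norm{Z}^2}\le(e-1)\sigma^2$ from $e^y\ge 1+y$, the tail bound $p_x \le e(\delta/T)^9$, and the union bound for item (ii) are all correct.

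One genuine gap: the claim $\norm{\mu_x}\le\tilde\sigma$ does not hold for all $\delta\in(0,1)$, $T\ge 1$, as the lemma statement requires. Your bound reads $\norm{\mu_x} \le 2\sqrt{e(e-1)}\,\sigma\,(\delta/T)^{3.5}$, which must be compared to $\tilde\sigma = 3\sigma\sqrt{\log(T/\delta)}$; as $T/\delta \downarrow 1$ the right-hand side vanishes while the left does not, and a direct check shows the inequality fails for $T/\delta \lesssim 1.4$ (e.g.\ $T=1$, $\delta=0.9$). The same issue appears in the union bound: $p_x + q \le \delta/T$ requires $e(\delta/T)^8\le 1/2$, which again fails when $T/\delta$ is near $1$. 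Your parenthetical ``for $T,\delta$ in the relevant range'' gestures at this, but the constant $3$ in $\tilde\sigma$ is fixed by the paper (via the macro $3\sigma\sqrt{\log(T/\delta)}$), not yours to tune, so the remark should instead be an explicit side condition such as $T/\delta \ge 2$ (under which your inequalities go through with ample slack). In the paper's regime this condition is vacuous, but as a matter of stating the lemma precisely it is worth noting; as $T/\delta\to 1$ the lemma as written degenerates, and it is plausible the cited source carries an implicit condition of the same kind.

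Two smaller remarks. First, your observation that \Cref{alg:clippedSGDdouble} actually issues $2T$ queries is correct and relevant --- the paper applies the lemma with $T$ rather than $2T$, so either the macro should use $\log(2T/\delta)$ or the lemma should be invoked with $2T$; you handle this appropriately. Second, the lemma statement says ``for any $w_0,\dots,w_{T-1}$'' but never refers to the $w_i$ again; your reading of them as the (possibly algorithm-dependent) query points $x_0,\dots,x_{T-1}$, together with the observation that on the agreement event the two oracles induce identical trajectories, is the intended one.
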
 
\section{Lemmas for proving Theorem \ref{thm:basic}}\label{app:lemmas}

\begin{lemma} %
    \label{lemma:stepsizeHierarchy}
    The value of $\eta_t \alpha_t$ is always smaller under ``standard clipping'' than under ``implicit clipping.'' Additionally, if $2 \log_+ \prn*{\frac{T}{\delta}} \prn*{64 L_1 R_0}^2 \leq T$, then it is always smaller under ``conservative clipping'' than under ``standard clipping.''
\end{lemma}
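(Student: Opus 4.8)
The plan is to prove both inequalities pointwise in $t$, by plugging in the definitions from Table~\ref{tab:parameters} and comparing the resulting expressions for $\eta_t \alpha_t$. I would write $g := \dbl{g_t}$ and $M := \max\{10 L_0, \tfrac{\sqrt T}{R_0}\sigma'\}$ so that the standard/implicit/conservative thresholds are $c_{\mathrm{std}} = M/L_1$, $c_{\mathrm{imp}} = M/L_1$, and $c_{\mathrm{cons}} = 64\sqrt{\log_+(T/\delta)}\tfrac{R_0}{\sqrt T}\,M$, and all three use the ``unclipped'' step size (standard and conservative literally the same $\eta_t = \tfrac1{16}\min\{\tfrac1{11L_0},(L_0+\tfrac{\sigma'\sqrt T}{R_0})^{-1}\}$, and implicit $\eta_t = \tfrac18(L_0 + \|g\|L_1 + \tfrac{\sigma'\sqrt T}{R_0})^{-1}$).

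For the first claim (standard $\le$ implicit), I would compare $\eta_t^{\mathrm{std}}\alpha_t^{\mathrm{std}} = \tfrac1{16}\min\{\tfrac1{11L_0},(L_0+\tfrac{\sigma'\sqrt T}{R_0})^{-1}\}\cdot\min\{1, \tfrac{c_{\mathrm{std}}}{\|g\|}\}$ against $\eta_t^{\mathrm{imp}}\alpha_t^{\mathrm{imp}} = \tfrac18(L_0+\|g\|L_1+\tfrac{\sigma'\sqrt T}{R_0})^{-1}$. The cleanest route is a two-case split on whether $\|g\| \le c_{\mathrm{std}}$. If $\|g\|\le c_{\mathrm{std}} = M/L_1$, then $\alpha_t^{\mathrm{std}}=1$ and $\|g\|L_1 \le M$, so the implicit denominator satisfies $L_0+\|g\|L_1+\tfrac{\sigma'\sqrt T}{R_0} \le L_0 + M + \tfrac{\sigma'\sqrt T}{R_0} = O(L_0 + \tfrac{\sigma'\sqrt T}{R_0})$ (using $M\le 10L_0 + \tfrac{\sigma'\sqrt T}{R_0}$); combined with the factor $\tfrac18$ vs $\tfrac1{16}$ this makes the implicit value at least as large, after checking the constants. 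If $\|g\|> c_{\mathrm{std}}$, then $\alpha_t^{\mathrm{std}} = c_{\mathrm{std}}/\|g\| = M/(L_1\|g\|)$, so $\eta_t^{\mathrm{std}}\alpha_t^{\mathrm{std}} = \tfrac1{16}\min\{\cdots\}\cdot \tfrac{M}{L_1\|g\|}$, and I need this $\le \tfrac18(L_0+\|g\|L_1+\tfrac{\sigma'\sqrt T}{R_0})^{-1}$; since $\|g\|L_1$ dominates the implicit denominator up to a constant and $M \le 10L_0 + \tfrac{\sigma'\sqrt T}{R_0}$ bounds the $\min\{\cdots\}$ factor, this reduces to a constant check. (This matches the paper's own remark that for $\sigma=0$, standard has $\eta_t\alpha_t = \Theta(\min\{1/L_0, 1/(L_1\|g\|)\})$ which is exactly $\Theta$ of the implicit stepsize $\Theta((L_0+\|g\|L_1)^{-1})$.)

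For the second claim, I would note that standard and conservative share the identical $\eta_t$, so the comparison reduces entirely to the clipping factors $\alpha_t = \min\{1, c/\|g\|\}$, which is monotone nondecreasing in the threshold $c$; hence it suffices to show $c_{\mathrm{cons}} \le c_{\mathrm{std}}$, i.e. $64\sqrt{\log_+(T/\delta)}\tfrac{R_0}{\sqrt T}\,M \le M/L_1$, i.e. $64 L_1 R_0 \sqrt{\log_+(T/\delta)} \le \sqrt T$, i.e. $T \ge \log_+(T/\delta)(64L_1R_0)^2$ — which is slightly weaker than the stated hypothesis $2\log_+(T/\delta)(64L_1R_0)^2 \le T$, so it holds. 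I expect the main obstacle to be purely bookkeeping in the first claim: tracking the explicit numerical constants ($\tfrac1{16}$, $\tfrac1{11}$, the factor $10$ inside $M$, the $\tfrac18$) through the two cases to confirm the inequality holds without slack loss; there is no conceptual difficulty, just care that $M \le 10L_0 + \tfrac{\sigma'\sqrt T}{R_0}$ and that the $\min$ is handled on the correct branch.
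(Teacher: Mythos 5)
Your plan matches the paper's proof: the conservative-vs-standard comparison reduces, exactly as you say, to a monotonicity-in-threshold argument once one notes the two methods share the same $\eta_t$, and the standard-vs-implicit comparison is a two-case split on clipped vs.\ unclipped iterations followed by constant bookkeeping. The only stylistic difference is that the paper handles the clipped case with a slicker substitution (replacing $L_0$ by $(\clip/\|\dbl{g_t}\|)L_0 \le L_0$ in the denominator of $\tfrac{1}{L_0 + \clip L_1}$, then factoring out $\clip/\|\dbl{g_t}\|$ to absorb $\alpha_t$) rather than the cross-multiplication you sketch, but both routes close the constant check without difficulty.
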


\begin{proof}
    First, let us compare conservative clipping and standard clipping: The definition of $\eta_t$ is equivalent in both methods. Due to the assumption on $T$, the threshold $c$ is smaller in conservative clipping than in standard clipping. This immediately leads to the stated relationship regarding $\eta_t \alpha_t$.

    Now let us compare standard clipping and implicit clipping: For the rest of the proof, let $\eta_t$, $\alpha_t$ and $c$ be defined as in standard clipping. Observe that 
    \begin{align*}
        \eta_t 
        = \min \crl*{\frac{1}{11 L_0}, \frac{1}{L_0 + \frac{\sigma \sqrt{T}}{R_0}}} 
        = \frac{1}{L_0 + \max \crl*{10 L_0, \frac{\sigma \sqrt{T}}{R_0}}}  
        = \frac{1}{L_0 + \clip L_1}
    \end{align*}
    and
    \begin{align*}
        \eta_t 
        = \min \crl*{\frac{1}{11 L_0}, \frac{1}{L_0 + \frac{\sigma \sqrt{T}}{R_0}}} 
        = \min \crl*{\frac{1}{11 L_0}, \frac{1}{L_0 + \frac{\sigma \sqrt{T}}{R_0}}, \frac{1}{\frac{\sigma \sqrt{T}}{R_0}}} 
        = \min \crl*{\eta_t, \frac{R_0}{\sigma \sqrt{T}}}.
    \end{align*}
    Together we get
    \begin{align*}
        \eta_t 
        = \min \crl*{\frac{1}{L_0 + \clip L_1}, \frac{R_0}{\sigma \sqrt{T}}}.
    \end{align*}
    Consider the case of unclipped iterations, which satisfy $\alpha_t = 1$ and $\clip > \norm{\dbl{g_t}}$. In this case,
    \begin{align*}
        \eta_t \alpha_t 
        = \eta_t
        = \frac{1}{16} \min \crl*{\frac{1}{L_0 + \clip L_1}, \frac{R_0}{\sigma \sqrt{T}}} 
        \leq \frac{1}{16} \min \crl*{\frac{1}{L_0 + \norm{\dbl{g_t}} L_1}, \frac{R_0}{\sigma \sqrt{T}}}.
    \end{align*}
    Now consider the case of clipped iterations, which satisfy $\alpha_t = \frac{\clip}{\norm{\dbl{g_t}}}$ and $\clip \leq \norm{\dbl{g_t}}$. In this case,
    \allowdisplaybreaks 
    \begin{align*}
        \eta_t \alpha_t
        &= \frac{1}{16} \alpha_t \min \crl*{\frac{1}{L_0 + \clip L_1}, \frac{R_0}{\sigma \sqrt{T}}} \\
        &\leq \frac{1}{16} \alpha_t \min \crl*{\frac{1}{\prn*{\clip / \norm{\dbl{g_t}}} L_0 + \clip L_1}, \frac{R_0}{\sigma \sqrt{T}}} \\
        &= \frac{1}{16} \alpha_t \min \crl*{\frac{1}{\prn*{\clip / \norm{\dbl{g_t}}}} \frac{1}{L_0 + \norm{\dbl{g_t}} L_1}, \frac{R_0}{\sigma \sqrt{T}}} \\
        &\leq \frac{1}{16} \min \crl*{\alpha_t \frac{1}{\prn*{\clip / \norm{\dbl{g_t}}}} \frac{1}{L_0 + \norm{\dbl{g_t}} L_1}, \frac{R_0}{\sigma \sqrt{T}}} \\
        &= \frac{1}{16} \min \crl*{\frac{1}{L_0 + \norm{\dbl{g_t}} L_1}, \frac{R_0}{\sigma \sqrt{T}}}.
    \end{align*}
    Therefore, both cases satisfy
    \begin{align*}
        \eta_t \alpha_t
        = \frac{1}{16} \min \crl*{\frac{1}{L_0 + \norm{\dbl{g_t}} L_1}, \frac{1}{\frac{\sigma \sqrt{T}}{R_0}}} 
        &= \frac{1}{16} \frac{1}{\max \crl*{L_0 + \norm{\dbl{g_t}} L_1, \frac{\sigma \sqrt{T}}{R_0}}} \\ 
        &\leq \frac{1}{16} \frac{1}{\frac{1}{2} \prn*{L_0 + \norm{\dbl{g_t}} L_1 + \frac{\sigma \sqrt{T}}{R_0}}} 
        = \frac{1}{8} \frac{1}{L_0 + \norm{\dbl{g_t}} L_1 + \frac{\sigma \sqrt{T}}{R_0}}.
    \end{align*}
\end{proof}

\begin{lemma} %
    \label{lemma:bound_weighted_grad_norm}
    Let $f : \R^d \to \R$ and suppose \Cref{assump:convex,assump:lzo,assump:bounded_noise} hold. Additionally, assume that $\eta_t \alpha_t \leq \frac{1}{8} \prn*{L_0 + \norm{\dbl{g_t}} L_1 + \frac{\sigma \sqrt{T}}{R_0}}^{-1}$ and $6 \sigma \leq \clip \leq \norm{\dbl{g_t}}$. Then for any $g \in \mc{G}(x_t)$,
    \begin{align*}
        \eta_t^2 \alpha_t^2 \norm{g}^2
        \leq \frac{1}{2} \eta_t \alpha_t \Delta_t.
    \end{align*}
\end{lemma}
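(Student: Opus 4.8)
The plan is to reduce the claim to the single inequality $\eta_t \alpha_t \norm{g}^2 \le \tfrac12 \Delta_t$; since $\eta_t \alpha_t > 0$, multiplying this through by $\eta_t \alpha_t$ gives exactly the statement. The engine of the argument is \Cref{lemma:bound_on_grad_squared}, which provides $\norm{\grad f(x_t)}^2 \le 2\prn*{L_0 + L_1 \norm{\grad f(x_t)}}\Delta_t$. To invoke it I must (i) replace $\norm{g}$ in the target inequality by $\norm{\grad f(x_t)}$, and (ii) lower-bound the step-size denominator $L_0 + \norm{\dbl{g_t}} L_1 + \tfrac{\sigma\sqrt T}{R_0}$ by a constant multiple of $L_0 + \norm{\grad f(x_t)} L_1$. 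Both replacements are legitimate for the simple reason that on a \emph{clipped} iteration the noise is dominated by the true gradient norm.

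Concretely, the first step is to note that both $g$ and $\dbl{g_t}$ are realizations of $\mc{G}(x_t)$, so \Cref{assump:bounded_noise} gives $\norm{g - \grad f(x_t)} \le \sigma$ and $\norm{\dbl{g_t} - \grad f(x_t)} \le \sigma$ \emph{deterministically} — this is precisely why the lemma can quantify over every $g \in \mc{G}(x_t)$ without taking any expectation. Combining the second bound with the hypothesis $\norm{\dbl{g_t}} \ge 6\sigma$ yields $\norm{\grad f(x_t)} \ge \norm{\dbl{g_t}} - \sigma \ge 5\sigma$, i.e.\ $\sigma \le \tfrac15 \norm{\grad f(x_t)}$. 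Feeding this back in gives $\norm{g} \le \norm{\grad f(x_t)} + \sigma \le \tfrac65 \norm{\grad f(x_t)}$ and $\norm{\dbl{g_t}} \ge \norm{\grad f(x_t)} - \sigma \ge \tfrac45 \norm{\grad f(x_t)}$; incidentally this forces $\grad f(x_t) \neq 0$, hence $\Delta_t > 0$ via \Cref{lemma:bound_on_grad_squared}, so no degenerate case arises.

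Then I chain the estimates. Writing $G := \norm{\grad f(x_t)}$, the step-size hypothesis together with the two comparisons above gives
\begin{align*}
    \eta_t \alpha_t \norm{g}^2
    \;\le\; \frac{1}{8}\cdot\frac{(6G/5)^2}{L_0 + (4G/5)L_1}
    \;\le\; \frac{1}{8}\cdot\frac{36G^2/25}{\tfrac45\prn*{L_0 + G L_1}}
    \;=\; \frac{9}{40}\cdot\frac{G^2}{L_0 + G L_1}
    \;\le\; \frac{9}{40}\cdot 2\Delta_t
    \;=\; \frac{9}{20}\Delta_t
    \;\le\; \frac12\Delta_t,
\end{align*}
where the first inequality uses the step-size bound (after discarding the nonnegative $\tfrac{\sigma\sqrt T}{R_0}$ term) with $\norm{\dbl{g_t}} \ge \tfrac45 G$ and $\norm{g} \le \tfrac65 G$, the second uses $L_0 + \tfrac45 G L_1 \ge \tfrac45(L_0 + G L_1)$, and the third uses \Cref{lemma:bound_on_grad_squared}. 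Multiplying by $\eta_t \alpha_t$ completes the argument. There is no genuine obstacle: the content is the single observation that clipping occurs only when signal dominates noise, after which everything is constant-tracking; the one thing to be careful about is preserving the slack $\tfrac{9}{20} < \tfrac12$, which is exactly why the threshold multiple $6$ and the $\tfrac18$ factor in the assumed step-size bound are chosen as they are.
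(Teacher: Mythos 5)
Your proof is correct and follows essentially the same approach as the paper: derive $\sigma \le \tfrac15\norm{\grad f(x_t)}$ from $6\sigma \le \norm{\dbl{g_t}}$ and bounded noise, obtain the two-sided comparison $\tfrac45 G \le \norm{\mc{G}(x_t)} \le \tfrac65 G$, lower-bound the step-size denominator by $\tfrac45(L_0 + G L_1)$, and invoke \Cref{lemma:bound_on_grad_squared} to land at the constant $\tfrac{9}{20} < \tfrac12$. The only cosmetic difference is that the paper establishes $\sigma \le \tfrac15 G$ by contradiction while you do it directly; and your aside that $\grad f(x_t) \neq 0$ is not actually forced when $\sigma=0$, but that edge case is trivially handled since both sides vanish, so it is harmless.
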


\begin{proof}
    By our assumptions, we have $6 \sigma \leq \norm{\dbl{g_t}}$. This implies $\sigma \leq (1/5) \norm{\grad f(x_t)}$, as otherwise
    \begin{align*}
        \norm{\dbl{g_t}}
        \leq \norm{\dbl{g_t} - \grad f(x_t)} + \norm{\grad f(x_t)}
        < \sigma + 5 \sigma
        = 6 \sigma.
    \end{align*}   
    Therefore, every oracle query satisfies 
    \begin{align*}
        \norm{\grad f(x_t) - \mc{G}(x_t)} \leq \sigma \leq (1/5) \norm{\grad f(x_t)}.
    \end{align*}
    By the triangle inequalities, we obtain
    \begin{align*}
        \label{eqn:low_noise_1}
        \numberthis
        (4/5) \norm{\grad f(x_t)}  
        \leq \norm{\mc{G}(x_t)} 
        \leq (6/5) \norm{\grad f(x_t)}.
    \end{align*}
    Therefore,
    \begin{align*}
        \label{eqn:low_noise_2}
        \numberthis
        \prn*{\eta_t \alpha_t}^{-1}
        \geq 8 \prn*{L_0 + \norm{\dbl{g_t}} L_1 + \tfrac{\sigma \sqrt{T}}{R}}
        &\geq 8 \prn*{L_0 + \norm{\dbl{g_t}} L_1} \\
        &\overset{\eqref{eqn:low_noise_1}}{\geq} 8 \prn*{L_0 + (4/5) \norm{\grad f(x_t)} L_1}
        \geq 8 (4/5) \prn*{L_0 + \norm{\grad f(x_t)} L_1}.
    \end{align*}
    By applying the smoothness property stated in \Cref{lemma:bound_on_grad_squared}, we obtain
    \begin{align*}
        \eta_t^2 \alpha_t^2 \norm{\mc{G}_t(x_t)}^2 
        &\overset{\eqref{eqn:low_noise_2}}{\leq} \frac{1}{8(4/5)} \eta_t \alpha_t \prn*{\frac{1}{L_0 +\norm{\grad f(x_t)} L_1}} \norm{\mc{G}_t(x_t)}^2 \\
        &\overset{\eqref{eqn:low_noise_1}}{\leq} \frac{(6/5)^2}{8(4/5)} \eta_t \alpha_t \prn*{\frac{1}{L_0 +\norm{\grad f(x_t)} L_1}} \norm{\grad f(x_t)}^2 
        \leq \frac{1}{2} \eta_t \alpha_t \Delta_t.
    \end{align*}
\end{proof}

\begin{lemma} %
    \label{lemma:bound_tau1_suboptimality}
    Assume that the expression $\eta_t \alpha_t$ is between its value under ``conservative clipping'' and its value under ``implicit clipping''. Additionally, assume that the threshold $\clip$ is no less than its value under ``conservative clipping''. Then
    \begin{enumerate}[label=(\roman*)]
        \item $\clip \geq \norm{\dbl{g_t}}$ implies $\eta_t \alpha_t \geq \frac{1}{16} \prn*{11 L_0 + \tfrac{\sigma \sqrt{T}}{R_0}}^{-1}$;

        \item $\clip \leq \norm{\dbl{g_t}}$ implies $\eta_t \alpha_t \Delta_t \geq 8 \log_+ \prn*{\tfrac{T}{\delta}} \frac{R_0^2}{T}$.
    \end{enumerate}
\end{lemma}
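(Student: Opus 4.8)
The plan is to leverage the step-size hierarchy established in Lemma~\ref{lemma:stepsizeHierarchy}, which tells us that ``conservative'' $\le$ ``standard'' $\le$ ``implicit'' for $\eta_t\alpha_t$, so that any $\eta_t\alpha_t$ lying between the conservative and implicit values can be bounded below by the conservative value and above by the implicit value. For part~(i), I would start from the assumption $\clip \ge \norm{\dbl{g_t}}$, which is precisely the ``unclipped iteration'' case, so $\alpha_t = 1$ for standard clipping and hence $\eta_t\alpha_t = \eta_t$. Since $\eta_t\alpha_t$ is at least its conservative value, and in the computation inside the proof of Lemma~\ref{lemma:stepsizeHierarchy} the conservative $\eta_t\alpha_t$ in an unclipped iteration equals $\frac{1}{16}\min\{\frac{1}{11 L_0}, \frac{1}{L_0 + \sigma\sqrt{T}/R_0}\} = \frac{1}{16}(L_0 + \max\{10L_0, \sigma\sqrt{T}/R_0\})^{-1} \ge \frac{1}{16}(11 L_0 + \sigma\sqrt{T}/R_0)^{-1}$, the bound follows after noting that $L_0 + \max\{10L_0, \sigma\sqrt{T}/R_0\} \le 11 L_0 + \sigma\sqrt{T}/R_0$. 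The only subtlety is making sure that in the unclipped case the step size $\eta_t\alpha_t$ for all three relevant methods collapses to the same clean lower bound; this is already essentially contained in the displayed computation of $\eta_t$ in Lemma~\ref{lemma:stepsizeHierarchy}.

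For part~(ii), I would follow the ``Proof sketch of Claim~\ref{claim:progress}'' almost verbatim but with the noise bound made rigorous. Assume $\clip \le \norm{\dbl{g_t}}$, so clipping occurs. Since the threshold $\clip$ is at least its conservative value, and the conservative threshold satisfies $\clip \ge \frac{1}{L_1}\max\{10L_0, \sigma\sqrt{T}/R_0\}\cdot(\text{something})$ — more simply, the threshold always dominates $6\sigma$ (this is where the ``$\clip \ge 6\sigma$'' remark in the sketch comes from; I would verify it from the explicit thresholds in Table~\ref{tab:parameters}, using $64\sqrt{\log_+(T/\delta)} \ge 6$ and similar). With $\norm{\dbl{g_t}} \ge \clip \ge 6\sigma$, the argument in Lemma~\ref{lemma:bound_weighted_grad_norm} gives $\norm{\dbl{g_t}} \approx \norm{\grad f(x_t)}$, precisely $(4/5)\norm{\grad f(x_t)} \le \norm{\dbl{g_t}} \le (6/5)\norm{\grad f(x_t)}$. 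Then I would combine: (a) Lemma~\ref{lemma:bound_on_grad_squared}, giving $\norm{\grad f(x_t)}^2 \le 2(L_0 + L_1\norm{\grad f(x_t)})\Delta_t$; (b) the upper bound $\eta_t\alpha_t \le \frac{1}{8}(L_0 + \norm{\dbl{g_t}}L_1 + \sigma\sqrt{T}/R_0)^{-1}$ from being below the implicit value, which after the norm-equivalence yields $(2\eta_t\alpha_t)^{-1}\Delta_t \ge c' (L_0 + L_1\norm{\grad f(x_t)})\Delta_t \ge \norm{\grad f(x_t)}^2 \gtrsim \norm{\dbl{g_t}}^2$, so $\eta_t\alpha_t\Delta_t \gtrsim (\eta_t\alpha_t\norm{\dbl{g_t}})^2$; and (c) the clipping identity $\eta_t\alpha_t\norm{\dbl{g_t}} = \eta_t\clip$ together with the lower bound $\eta_t\clip \ge 2\sqrt{\log_+(T/\delta)}\,R_0/\sqrt{T}$, which I would derive from the conservative-clipping lower bound on $\eta_t\alpha_t$ and the conservative threshold. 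Chaining these gives $\eta_t\alpha_t\Delta_t \ge 8\log_+(T/\delta) R_0^2/T$.

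The main obstacle I expect is bookkeeping the constants carefully enough that step (b)+(c) actually multiply out to the stated constant $8$ (rather than some smaller constant), while only using the \emph{extremal} step-size/threshold values (conservative and implicit) rather than the ``standard'' ones — the lemma is deliberately stated for a \emph{range} of $\eta_t\alpha_t$ so it can cover all three methods of Theorem~\ref{thm:basic} at once. Concretely, I must check that $\eta_t\clip$ can be lower-bounded by $2\sqrt{\log_+(T/\delta)}R_0/\sqrt{T}$ using only ``$\eta_t\alpha_t \ge$ conservative value'' and ``$\clip \ge$ conservative threshold,'' which requires confronting the slightly awkward form $64\sqrt{\log_+(T/\delta)}\frac{R_0}{\sqrt T}\max\{10L_0, \sqrt{T}\sigma'/R_0\}$ of the conservative threshold together with the matching $\eta_t$; and that the noise-domination constant $6$ in the threshold propagates correctly through the $(4/5)$–$(6/5)$ norm sandwich without eroding the final constant. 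Everything else is a routine substitution following the sketch.
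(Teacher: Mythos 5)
Your proposal is correct and follows essentially the same route as the paper: part~(i) is the conservative lower bound on $\eta_t\alpha_t$ in the unclipped case, and part~(ii) chains Lemma~\ref{lemma:bound_weighted_grad_norm} (derived from Lemma~\ref{lemma:bound_on_grad_squared} and the implicit-stepsize upper bound) with a lower bound on $\eta_t\alpha_t\norm{\dbl{g_t}}$ coming from the conservative stepsize and threshold. The constants ($2\sqrt{\log_+(T/\delta)}R_0/\sqrt{T}$, squared and doubled, giving $8\log_+(T/\delta)R_0^2/T$) also match the paper's calculation.

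One imprecision worth fixing: you invoke the ``clipping identity'' $\eta_t\alpha_t\norm{\dbl{g_t}} = \eta_t\clip$ as a general fact, but it holds only when $\alpha_t = \min\crl{1,\clip/\norm{\dbl{g_t}}}$ and $\norm{\dbl{g_t}} \ge \clip$; for the implicit method $\alpha_t = 1$ and the identity fails. The paper sidesteps this by lower-bounding $\eta_t\alpha_t$ directly by its conservative value $\eta_t^{\mathrm{cons}}\alpha_t^{\mathrm{cons}}$ and then using $\alpha_t^{\mathrm{cons}}\norm{\dbl{g_t}} = c^{\mathrm{cons}}$ (valid because $c^{\mathrm{cons}} \le \clip \le \norm{\dbl{g_t}}$). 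You do gesture at exactly this fix in your closing remarks (``derive from the conservative-clipping lower bound''), so the idea is there, but the write-up should replace the identity $\eta_t\alpha_t\norm{\dbl{g_t}}=\eta_t\clip$ with the inequality $\eta_t\alpha_t\norm{\dbl{g_t}} \ge \eta_t^{\mathrm{cons}}\alpha_t^{\mathrm{cons}}\norm{\dbl{g_t}} = \eta_t^{\mathrm{cons}}\,c^{\mathrm{cons}}$, which is what the hypothesis ``$\eta_t\alpha_t$ at least its conservative value'' actually gives you across all three methods.

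Your verification that $\clip \ge 6\sigma$ (needed to trigger Lemma~\ref{lemma:bound_weighted_grad_norm}) is sound: from the $\sigma\sqrt{T}/R_0$ branch of the $\max$ in the conservative threshold, $c^{\mathrm{cons}} \ge 64\sqrt{\log_+(T/\delta)}\,\sigma > 6\sigma$, and the lemma's hypothesis gives $\clip \ge c^{\mathrm{cons}}$.
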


\begin{proof}
    Consider iterations where $c \geq \norm{\dbl{g_t}}$. 
    In this case, under the choice of $\eta_t$ and $\alpha_t$ in ``conservative clipping'', we have
    \begin{align*}
        \eta_t \alpha_t 
        =    \eta_t
        =    \frac{1}{16} \min \crl*{\frac{1}{11 L_0}, \frac{1}{L_0 + \tfrac{\sigma \sqrt{T}}{R_0}}}
        \geq \frac{1}{16} \frac{1}{11 L_0 + \tfrac{\sigma \sqrt{T}}{R_0}}.
    \end{align*}
    Due to \Cref{lemma:stepsizeHierarchy}, this lower bound applies to any choice of $\eta_t$ and $\alpha_t$ that matches our setting.

    Consider iterations where $c \leq \norm{\dbl{g_t}}$. 
    In this case we have $\sigma \leq (1/5) \norm{\grad f(x_t)}$: otherwise,
    \begin{align*}
        \norm{\dbl{g_t}}
        \leq \norm{\dbl{g_t} - \grad f(x_t)} + \norm{\grad f(x_t)}
        < \sigma + 5 \sigma
        = 6 \sigma
        \leq \clip
        \leq \norm{\dbl{g_t}},
    \end{align*}
    where the last two inequalities are by our assumptions on $\clip$. Therefore \Cref{lemma:bound_weighted_grad_norm} applies, and therefore 
    \allowdisplaybreaks
    \begin{align*}
        \frac{1}{2} \eta_t \alpha_t \Delta_t
        &\overset{(i)}{\geq} \eta_t^2 \alpha_t^2 \norm{\dbl{g_t}}^2 \\
        &\overset{(ii)}{\geq} \prn*{\frac{1}{16} \min \crl*{\frac{1}{11 L_0}, \frac{1}{L_0 + \frac{\sigma \sqrt{T}}{R_0}}} \min \crl*{1, \frac{c}{\norm{\dbl{g_t}}}} \norm{\dbl{g_t}}}^2 \\
        &\overset{(iii)}{=} \prn*{\frac{1}{16} \min \crl*{\frac{1}{11 L_0}, \frac{1}{L_0 + \frac{\sigma \sqrt{T}}{R_0}}} \cdot c}^2 \\
        &\overset{(iv)}{\geq} \prn*{\frac{1}{16} \min \crl*{\frac{1}{11 L_0}, \frac{1}{L_0 + \frac{\sigma \sqrt{T}}{R_0}}} \cdot 64 \sqrt{\log_+ \prn*{\tfrac{T}{\delta}}} \frac{R_0}{\sqrt{T}} \max \crl*{10 L_0, \frac{\sigma \sqrt{T}}{R_0}}}^2,
    \end{align*} 
    where $(i)$ uses \Cref{lemma:bound_weighted_grad_norm}, $(ii)$ uses the assumption on $\eta_t \alpha_t$, $(iii)$ uses $\clip \leq \norm{\dbl{g_t}}$ and $(iv)$ uses the value of $\clip$ under ``conservative clipping''. 
    
    From this point we are done with our assumptions and only use pure algebra:
    \begin{align*}
        \cdots
        &\geq \prn*{\frac{1}{16} \min \crl*{\frac{1}{11 L_0}, \frac{1}{L_0 + \frac{\sigma \sqrt{T}}{R_0}}} \cdot 64 \sqrt{\log_+ \prn*{\tfrac{T}{\delta}}} \frac{R_0}{\sqrt{T}} \max \crl*{10 L_0, \frac{\sigma \sqrt{T}}{R_0}}}^2 \\
        &= 16 \log_+ \prn*{\tfrac{T}{\delta}} \frac{R_0^2}{T} \prn*{\min \crl*{\frac{1}{11 L_0}, \frac{1}{L_0 + \frac{\sigma \sqrt{T}}{R_0}}} \max \crl*{10 L_0, \frac{\sigma \sqrt{T}}{R_0}}}^2 \\
        &\geq 16 \log_+ \prn*{\tfrac{T}{\delta}} \frac{R_0^2}{T} \prn*{\min \crl*{\frac{10 L_0}{11 L_0}, \frac{\frac{1}{2} \prn*{10 L_0 + \frac{\sigma \sqrt{T}}{R_0}}}{L_0 + \frac{\sigma \sqrt{T}}{R_0}}}}^2 \\
        &\geq 16 \log_+ \prn*{\tfrac{T}{\delta}} \frac{R_0^2}{T} \prn*{\min \crl*{\frac{10}{11}, \frac{1}{2}}}^2 \\
        &= 4 \log_+ \prn*{\tfrac{T}{\delta}} \frac{R_0^2}{T}.
    \end{align*} 
\end{proof}

\begin{lemma} %
    \label{lemma:martingale}
    Let $f : \R^d \to \R$ and suppose \Cref{assump:convex,assump:lzo,assump:bounded_noise} hold. Assume \Cref{alg:clippedSGDdouble} is run with parameters satisfying $\eta_t \alpha_t \leq \frac{1}{8} \prn*{L_0 + \norm{\dbl{g_t}} L_1 + \frac{\sigma \sqrt{T}}{R_0}}^{-1}$. Then for any $\delta \in (0,1)$, with probability at least $1 - \delta$,
    \begin{align*}
        \sum_{t=0}^{T-1} \eta_t \alpha_t \Delta_t
        \leq 2 \log_+ \prn*{\tfrac{T}{\delta}} R_0^2.
    \end{align*}
\end{lemma}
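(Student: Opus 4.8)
The plan is to follow the standard convexity-based SGD analysis, splitting the one-step distance recursion into a ``signal'' term and a ``noise'' term, and then using the double-sampling independence to control the noise via the martingale concentration bound of \Cref{lemma:sub_gaussian}. First I would write the projection/descent inequality: since $x_{t+1} = \text{Proj}_\xset(x_t - \eta_t \alpha_t g_t)$ and $x\opt \in \xset$, non-expansiveness of the projection gives $R_{t+1}^2 \leq R_t^2 - 2 \eta_t \alpha_t \inner{g_t}{x_t - x\opt} + \eta_t^2 \alpha_t^2 \norm{g_t}^2$. Summing over $t = 0, \ldots, T-1$ and telescoping yields
\begin{align*}
    \sum_{t=0}^{T-1} \eta_t \alpha_t \inner{g_t}{x_t - x\opt}
    \leq \frac{R_0^2}{2} + \frac{1}{2} \sum_{t=0}^{T-1} \eta_t^2 \alpha_t^2 \norm{g_t}^2,
\end{align*}
which is the bound on $S_1$ from the sketch. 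Then I would decompose $\eta_t \alpha_t \inner{\grad f(x_t)}{x_t - x\opt} = \eta_t \alpha_t \inner{g_t}{x_t - x\opt} + \eta_t \alpha_t \inner{\grad f(x_t) - g_t}{x_t - x\opt}$ and call the second sum $S_2$.

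The key observation for $S_2$ is that, because of double sampling, $g_t = \mc{G}(x_t)$ is drawn \emph{independently} of the pair $(\eta_t, \alpha_t)$ conditionally on $x_t$ (those are computed from the separate sample $\dbl{g_t}$); hence $\Ex{\eta_t \alpha_t \inner{\grad f(x_t) - g_t}{x_t - x\opt} \mid \xi_0, \ldots, \xi_{t-1}, x_t, \dbl{g_t}} = 0$, so the summands of $S_2$ form a martingale difference sequence with respect to $\xi_t = (x_t, \dbl{g_t})$. To apply \Cref{lemma:sub_gaussian} I need an almost-sure bound on each term: by Cauchy–Schwarz and Assumption~\ref{assump:bounded_noise}, $|\eta_t \alpha_t \inner{\grad f(x_t) - g_t}{x_t - x\opt}| \leq \eta_t \alpha_t \sigma R_t$, and here I would use the diameter/telescoping to bound $R_t$ — or, more in the spirit of the paper, bound $\eta_t \alpha_t \leq \frac{R_0}{8\sigma\sqrt{T}}$ (from the assumed step-size bound) so that the per-step bound is $\sigma_t \leq \eta_t\alpha_t \sigma R_t$ with $\sum \sigma_t^2$ controllable. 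Choosing $\lambda$ appropriately in \Cref{lemma:sub_gaussian} — roughly $\lambda \sim 1/R_0^2$ times a constant — and bounding $R_t^2 \leq 2R_0^2$ (which itself follows from the recursion, or one argues inductively) gives, with probability at least $1-\delta$,
\begin{align*}
    S_2 \leq \prn*{\tfrac{1}{8} + \tfrac{5}{16}\log\prn*{\tfrac{T}{\delta}}} R_0^2 + \tfrac{1}{8} \sum_{t=0}^{T-1} \eta_t^2 \alpha_t^2 \norm{g_t}^2.
\end{align*}

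Combining, with probability at least $1-\delta$,
\begin{align*}
    \sum_{t=0}^{T-1} \eta_t \alpha_t \Delta_t
    \leq \sum_{t=0}^{T-1} \eta_t \alpha_t \inner{\grad f(x_t)}{x_t - x\opt}
    = S_1 + S_2
    \leq \tfrac{1}{2}\log_+\prn*{\tfrac{T}{\delta}} R_0^2 + \tfrac{3}{4}\sum_{t=0}^{T-1} \eta_t^2 \alpha_t^2 \norm{g_t}^2,
\end{align*}
using convexity of $f$ for the first inequality. The final step is to absorb the gradient-norm sum: I would split each index $t$ into a high-noise case $\norm{g_t} < 6\sigma$, where $\eta_t\alpha_t \leq \frac{R_0}{8\sigma\sqrt{T}}$ gives $\eta_t^2\alpha_t^2\norm{g_t}^2 \leq \frac{R_0^2}{T}$ (so these contribute at most $\frac{3}{4} R_0^2$ in total), and a low-noise case $\norm{g_t} \geq 6\sigma$, where \Cref{lemma:bound_weighted_grad_norm} applies and gives $\eta_t^2\alpha_t^2\norm{g_t}^2 \leq \frac{1}{2}\eta_t\alpha_t\Delta_t$. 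Substituting, the low-noise terms give $\frac{3}{4}\cdot\frac{1}{2}\sum \eta_t\alpha_t\Delta_t = \frac{3}{8}\sum\eta_t\alpha_t\Delta_t$, which can be moved to the left-hand side; rearranging and using $\log_+ \geq 2$ to absorb constants yields $\sum_{t=0}^{T-1}\eta_t\alpha_t\Delta_t \leq 2\log_+\prn*{\tfrac{T}{\delta}} R_0^2$. The main obstacle is the circular-looking dependence in the high-probability step — we need a bound on $R_t$ (equivalently on the partial sums of the recursion) to invoke the concentration inequality, but that bound is itself what the recursion delivers only \emph{after} concentration; the standard fix is to either carry an a-priori diameter bound, or run an induction on $t$ establishing $R_t^2 \leq 2R_0^2$ alongside the concentration event, and I would verify carefully that the event in \Cref{lemma:sub_gaussian} (which is a single tail bound over the whole horizon, not a stopped/adaptive one) is compatible with this inductive structure.
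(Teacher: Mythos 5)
Your overall structure matches the paper's proof exactly: decompose $\sum_t\eta_t\alpha_t\inner{\grad f(x_t)}{x_t-x\opt}$ into $S_1 + S_2$, bound $S_1$ by the projection/telescoping argument, treat the summands of $S_2$ as a martingale difference sequence (valid precisely because of double sampling), apply \Cref{lemma:sub_gaussian}, split the gradient-norm sum into high-noise and low-noise indices, and rearrange. The two issues you flag at the end are also exactly the ones the paper has to work hardest on, but the specific fixes you propose do not quite close the argument.

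First, the a-priori bound $R_t^2 \le 2R_0^2$ is false (and the paper does not prove it). If you plug the concentration estimate back into the telescoped recursion, you get
\begin{align*}
    R_t^2 \;\le\; \tfrac{1}{8} R_{\max,t-1}^2 + \prn*{1 + \tfrac12 \log\prn*{\tfrac{T}{\delta}}}R_0^2 + \sum_{s=0}^{t-1}\eta_s^2\alpha_s^2\norm{g_s}^2,
\end{align*}
and the right-hand side carries an unavoidable $\log(T/\delta)$ factor together with the full gradient-norm sum. There is no way to absorb these into $2R_0^2$. What the paper actually establishes by induction is $R_{\max,t}^2 \le C$ for the larger (random) constant $C := 2\prn*{\prn*{1+\tfrac12\log\prn*{\tfrac{T}{\delta}}}R_0^2 + \sum_{t=0}^{T-1}\eta_t^2\alpha_t^2\norm{g_t}^2}$, which is tailored so that the inductive step $R_t^2 \le \tfrac18 C + \tfrac12 C \le C$ closes. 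The gradient-norm sum in $C$ is then controlled only \emph{after} the induction, by the same high/low-noise split you describe. So your final rearrangement is right, but your intermediate claim that $R_t^2 \le 2R_0^2$ ``follows from the recursion'' is not: you need to carry the looser $C$-bound through the argument.

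Second, on whether \Cref{lemma:sub_gaussian} is compatible with the induction: it is, but not as a single tail bound over the whole horizon. The paper invokes it once for each partial-sum length $t\in[T]$ with confidence $1-\delta/T$ and then takes a union bound, so that on a single event of probability $\ge 1-\delta$ the estimate
\begin{align*}
    \sum_{s=0}^{t-1}\eta_s\alpha_s\inner{\grad f(x_s)-g_s}{x_s-x\opt}
    \;\le\; \tfrac{1}{16}R_{\max,t-1}^2 + \tfrac14\log\prn*{\tfrac{T}{\delta}}R_0^2
\end{align*}
holds simultaneously for all $t = 1,\dots,T$. This is what makes the induction on $t$ legitimate, and it is also the source of the $\log(T/\delta)$ (rather than $\log(1/\delta)$) factors in the final bound. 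You suspected this was the delicate point; the resolution is the union bound over $t$, together with the enlarged inductive invariant $R_{\max,t}^2 \le C$.
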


\begin{proof}
    We remark that the first section of the proof follows techniques used in \citet[Lemma 2]{attia2023sgd}.
      
    Define $Z_t := \eta_t \alpha_t \inner{\grad f(x_t) - g_t}{x_t - x\opt}$. Recall from \Cref{alg:clippedSGDdouble} that $\eta_t$ and $\alpha_t$ are deterministic given $\dbl{g_t}$. Then the sequence $Z_0,\ldots,Z_{T-1}$ is a \textit{martingale difference sequence} with respect to $\xi_{i-1} \defeq (x_i, \dbl{g_i})$:
    \begin{align*}
        &\Ex*{Z_t | x_t, \dbl{g_t}}
        = \eta_t \alpha_t \inner*{\grad f(x_t) - \Ex*{g_t | x_t, \dbl{g_t}}}{x_t - x\opt}
        = 0.
    \end{align*}
    Define $R_{\max,t} := \underset{0 \leq s \leq t}{\max} \crl*{R_s}$. Each $Z_t$ satisfies
    \begin{align*}
        &|Z_t| 
        \leq \eta_t \alpha_t \norm{\grad f(x_t) - g_t} \norm{x_t - x\opt}
        \leq \eta_t \alpha_t \sigma R_t
        \leq \frac{1}{8} \prn*{\tfrac{\sigma \sqrt{T}}{R_0}}^{-1} \sigma R_{\max,t}
        \leq \frac{R_0 R_{\max,t}}{8 \sqrt{T}}.
    \end{align*}
    Therefore, by \Cref{lemma:sub_gaussian}, for any $t \in [T]$, $\lambda > 0$ and $\delta \in (0,1)$, with probability at least $1-\tfrac{\delta}{T}$ it holds that
    \begin{align*}
        \sum_{s=0}^{t-1} \eta_s \alpha_s \inner{\grad f(x_s) - g_s}{x_s - x\opt}
        &\leq \frac{3}{4} \lambda \sum_{s=0}^{t-1} \frac{R_0^2 R_{\max,s}^2}{64 T} + \frac{1}{\lambda} \log \prn*{\frac{T}{\delta}} \\
        &\leq \frac{1}{64} \lambda R_0^2 R_{\max,t-1}^2 + \frac{1}{\lambda} \log \prn*{\frac{T}{\delta}} .
    \end{align*}
    By applying a union bound and choosing $\lambda := 4 R_0^{-2}$, we get that with probability at least $1 - \delta$,
    \begin{align*}
        \label{eqn:bound_forall_t}
        \numberthis
        \forall t=1,\ldots,T: \;    
        \sum_{s=0}^{t-1} \eta_s \alpha_s \inner{\grad f(x_s) - g_s}{x_s - x\opt}
        \leq \frac{1}{16} R_{\max,t-1}^2 + \frac{1}{4} \log \prn*{\frac{T}{\delta}} R_0^2.
    \end{align*}
    
    Define $C := 2 \prn*{\prn*{1 + \frac{1}{2} \log \prn*{\frac{T}{\delta}}} R_0^2 + \sum_{t=0}^{T-1} \eta_t^2 \alpha_t^2 \norm{g_t}^2}$. We show by induction on $t$ that, when \eqref{eqn:bound_forall_t} holds, $R_{\max,t}^2 \leq C$ for all $0 \leq t < T$. For $t=0$ we have $R_{\max,0}^2 = R_0^2 \leq C$. Let us assume correctness for $0,\ldots,t-1$. This implies $R_{\max,t-1}^2 \leq C$, so to prove the induction step, it suffices to show $R_t^2 \leq C$. By unfolding the definitions of $R_0,\ldots,R_t$ and using common projection algebra, we have
    \allowdisplaybreaks
    \begin{align*}
        R_t^2 
        &\leq R_0^2 - 2 \sum_{s=0}^{t-1} \eta_s \alpha_s \inner{g_s}{x_s - x\opt} + \sum_{s=0}^{t-1} \eta_s^2 \alpha_s^2 \norm{g_s}^2 \\
        &\overset{(i)}{\leq} R_0^2 - 2 \sum_{s=0}^{t-1} \eta_s \alpha_s \inner{g_s}{x_s - x\opt} + \sum_{s=0}^{t-1} \eta_s^2 \alpha_s^2 \norm{g_s}^2 + 2 \sum_{s=0}^{t-1} \eta_s \alpha_s \inner{\grad f(x_s) }{x_s - x\opt} \\
        &= R_0^2 + 2 \sum_{s=0}^{t-1} \eta_s \alpha_s \inner{\grad f(x_s) - g_s}{x_s - x\opt} + \sum_{s=0}^{t-1} \eta_s^2 \alpha_s^2 \norm{g_s}^2 \\
        &\overset{\eqref{eqn:bound_forall_t}}{\leq} \frac{1}{8} R_{\max,t-1}^2 + \prn*{1 + \frac{1}{2} \log \prn*{\frac{T}{\delta}}} R_0^2 + \sum_{s=0}^{t-1} \eta_s^2 \alpha_s^2 \norm{g_s}^2 \\
        &\leq \frac{1}{8} R_{\max,t-1}^2 + \frac{C}{2} \\ 
        &\leq C,
    \end{align*}
    where $(i)$ holds since, by convexity and the optimality of $x\opt$,
    \begin{align*}
        0 
        \leq - (f(x\opt) - f(x_s))
        \leq - \inner{\grad f(x_s) }{x\opt - x_s}
        = \inner{\grad f(x_s) }{x_s - x\opt}.
    \end{align*}
    By applying the bound on $R_{\max,t}^2$ to \Cref{eqn:bound_forall_t} we obtain that
    \begin{align*}
        \sum_{t=0}^{T-1} \eta_t \alpha_t \inner{\grad f(x_t) - g_t}{x_t - x\opt}
        &\leq \frac{1}{8} \prn*{\prn*{1 + \frac{1}{2} \log \prn*{\frac{T}{\delta}}} R_0^2 + \sum_{t=0}^{T-1} \eta_t^2 \alpha_t^2 \norm{g_t}^2} + \frac{1}{4} \log \prn*{\frac{T}{\delta}} R_0^2 \\
        &= \prn*{\frac{1}{8} + \frac{5}{16} \log \prn*{\frac{T}{\delta}}} R_0^2 + \frac{1}{8} \sum_{t=0}^{T-1} \eta_t^2 \alpha_t^2 \norm{g_t}^2.
    \end{align*}
    Moving on, a standard analysis shows that
    \begin{align*}
        \sum_{t=0}^{T-1} \eta_t \alpha_t \inner{g_t}{x_t - x\opt} 
        &\leq \frac{R_0^2}{2} + \frac{1}{2} \sum_{t=0}^{T-1} \eta_t^2 \alpha_t^2 \norm{g_t}^2. 
    \end{align*}
    By summing the two inequalities and using convexity, we get
    \begin{align*}
        \sum_{t=0}^{T-1} \eta_t \alpha_t \Delta_t
        &\leq \sum_{t=0}^{T-1} \eta_t \alpha_t \inner{\grad f(x_t) }{x_t - x\opt} \\ 
        &\leq \prn*{\frac{5}{8} + \frac{5}{16} \log \prn*{\frac{T}{\delta}}} R_0^2 + \frac{3}{4} \sum_{t=0}^{T-1} \eta_t^2 \alpha_t^2 \norm{g_t}^2 \\ 
        &\leq \frac{1}{2} \log_+ \prn*{\tfrac{T}{\delta}} R_0^2 + \frac{3}{4} \sum_{t=0}^{T-1} \eta_t^2 \alpha_t^2 \norm{g_t}^2.
    \end{align*}
    When $\sigma \geq (1/5) \norm{\grad f(x_t)}$, we have
    \begin{align*}
        \norm{g_t}
        \leq \norm{g_t - \grad f(x_t)} + \norm{\grad f(x_t)}
        \leq \sigma + 5 \sigma
        = 6 \sigma
    \end{align*}
    and therefore
    \begin{align*}
        \eta_t^2 \alpha_t^2 \norm{g_t}^2
        \leq \prn*{6 \sigma \eta_t \alpha_t}^2 
        \leq \prn*{\frac{6 \sigma}{8} \prn*{\tfrac{\sigma \sqrt{T}}{R_0}}^{-1}}^2 
        \leq \frac{R_0^2}{T}.
    \end{align*}
    When $\sigma < (1/5) \norm{\grad f(x_t)}$, \Cref{lemma:bound_weighted_grad_norm} shows that
    \begin{align*}
        \eta_t^2 \alpha_t^2 \norm{g_t}^2
        \leq \frac{1}{2} \eta_t \alpha_t \Delta_t.
    \end{align*}
    Using these two inequalities, we get
    \begin{align*}
        \sum_{t=0}^{T-1} \eta_t \alpha_t \Delta_t
        &\leq \frac{1}{2} \log_+ \prn*{\tfrac{T}{\delta}} R_0^2 + \frac{3}{4} \sum_{t=0}^{T-1} \prn*{\frac{1}{2} \eta_t \alpha_t \Delta_t + \frac{R_0^2}{T}} \\
        &\leq \frac{1}{2} \log_+ \prn*{\tfrac{T}{\delta}} R_0^2 + \frac{1}{2} \sum_{t=0}^{T-1} \eta_t \alpha_t \Delta_t + R_0^2 \\
        &\leq \log_+ \prn*{\tfrac{T}{\delta}} R_0^2 + \frac{1}{2} \sum_{t=0}^{T-1} \eta_t \alpha_t \Delta_t.
    \end{align*}
    By rearranging the terms and multiplying by 2, we get
    \begin{align*}
        \sum_{t=0}^{T-1} \eta_t \alpha_t \Delta_t
        \leq 2 \log_+ \prn*{\tfrac{T}{\delta}} R_0^2.
    \end{align*}
\end{proof}

\section{Proof of Theorem \ref{thm:basic}}\label{app:thm1-proof}

\restateMainTheorem*

\begin{proof}
    We begin with a comment about the light tail assumption, and then continue with the proof. 

    \paragraph{Light-tailed noise vs. bounded noise.}
    The rest of the proof is written under a modified setting: It uses \Cref{assump:bounded_noise} (bounded noise) instead of \Cref{assump:light-tail} (light-tailed noise), and it adjusts $\eta_t$ and $\clip$ by replacing instances of $\ltsigma$ with $\sigma$. The proof obtains a bound that holds with probability at least $1 - \delta$. By \Cref{lemma:ltreduction}, with probability at least $1 - \delta$, using an oracle that satisfies \Cref{assump:light-tail} results in the same output as using an oracle that satisfies \Cref{assump:bounded_noise} with $\sigma' = \ltsigma$. By using a union bound, we get that the desired bound holds under \Cref{assump:light-tail} with probability at least $1 - 2 \delta$. 

    \paragraph{Proof under \Cref{assump:bounded_noise}.} 
    By \Cref{lemma:martingale} we have
    \begin{align*}
        \sum_{t \in \mc{T}_1} \eta_t \alpha_t \Delta_t 
        \leq \sum_{t=0}^{T-1} \eta_t \alpha_t \Delta_t
        \leq 2 \log_+ \prn*{\tfrac{T}{\delta}} R_0^2.
    \end{align*}
    The criteria for belonging to $\mc{T}_1$, together with \Cref{lemma:bound_tau1_suboptimality}, show that this implies
    \begin{align*}
        8 \log_+ \prn*{\tfrac{T}{\delta}} \frac{R_0^2}{T} |\mc{T}_1|
        \leq 2 \log_+ \prn*{\tfrac{T}{\delta}} R_0^2.
    \end{align*}
    From this, it follows that $|\mc{T}_1| \leq \frac{T}{2}$, and therefore $|\mc{T}_2| \geq \frac{T}{2}$.
    
    Similarly, by \Cref{lemma:martingale} we have
    \begin{align*}
        \sum_{t \in \mc{T}_2} \eta_t \alpha_t \Delta_t 
        \leq \sum_{t=0}^{T-1} \eta_t \alpha_t \Delta_t
        \leq 2 \log_+ \prn*{\tfrac{T}{\delta}} R_0^2.
    \end{align*}
    The criteria for belonging to $\mc{T}_2$, together with \Cref{lemma:bound_tau1_suboptimality}, show that this implies
    \begin{align*}
        \frac{1}{16} \prn*{11 L_0 + \frac{\sigma \sqrt{T}}{R_0}}^{-1} \sum_{t \in \mc{T}_2} \Delta_t 
        \leq 2 \log_+ \prn*{\tfrac{T}{\delta}} R_0^2.
    \end{align*}
    From this, it follows that
    \begin{align*}
        \frac{1}{|\mc{T}_2|} \sum_{t \in \mc{T}_2} \Delta_t 
        \leq \frac{32 \log_+ \prn*{\frac{T}{\delta}} R_0^2 \prn*{11 L_0 + \frac{\sigma \sqrt{T}}{R_0}}}{|\mc{T}_2|}
        \leq \frac{64 \log_+ \prn*{\frac{T}{\delta}} \prn*{11 L_0 R_0^2 + \sigma R_0 \sqrt{T}}}{T},
    \end{align*}
    where the last inequality uses the lower limit on $|\mc{T}_2|$.
    
    From the definition of $\Delta_t$, and by Jensen's inequality, we obtain
    \begin{align*}
        f \prn*{\frac{1}{|\mc{T}_2|} \sum_{t \in \mc{T}_2} x_t} - f(x\opt)
        \leq \frac{64 \log_+ \prn*{\frac{T}{\delta}} \prn*{11 L_0 R_0^2 + \sigma R_0 \sqrt{T}}}{T}.
    \end{align*}
    Recall from \Cref{alg:clippedSGDdouble} that $|\mc{T}_2| \geq \frac{T}{2}$ implies $\bar{x} = \frac{1}{|\mc{T}_2|} \sum_{t \in \mc{T}_2} x_t$. Therefore the proof is complete.
\end{proof} 
\section{Lemmas for proving Theorem \ref{thm:adaptive}}

\begin{lemma} %
    \label{lemma:adaptiveSGD_stepsize}
    Let $f : \R^d \to \R$ and suppose \Cref{assump:convex,assump:lzo,assump:bounded_noise} hold. Assume \Cref{alg:clippedSGDdouble} is run with step size $\eta_t = \frac{R}{\sqrt{\sum_{i=0}^t \alpha_i^2 \norm{g_i}^2}}$ and clipping rule $\alpha_t = \min \crl*{1, \frac{\clip}{\norm{\dbl{g_t}}}}$.
    Then for any threshold $\clip$,
    \begin{align*}
        \sum_{t=0}^{T-1} \alpha_t \inner{g_t}{x_t - x\opt}
        \leq 2 R \sqrt{\sum_{i=0}^{T-1} \alpha_i^2 \norm{g_i}^2}.
    \end{align*}
\end{lemma}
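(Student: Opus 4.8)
The plan is to run the textbook AdaGrad-style analysis, adapted to the clipped update $x_{t+1} = \text{Proj}_\xset(x_t - \eta_t\alpha_t g_t)$. Writing $y_{t+1} := x_t - \eta_t\alpha_t g_t$ and using that the Euclidean projection onto $\xset$ is nonexpansive together with $x\opt\in\xset$, I would first derive the one-step inequality
\[
    \norm{x_{t+1}-x\opt}^2 \le \norm{x_t-x\opt}^2 - 2\eta_t\alpha_t\inner{g_t}{x_t-x\opt} + \eta_t^2\alpha_t^2\norm{g_t}^2 .
\]
Rearranging, dividing by $2\eta_t$, and summing over $t=0,\dots,T-1$ yields
\[
    \sum_{t=0}^{T-1}\alpha_t\inner{g_t}{x_t-x\opt} \le \underbrace{\sum_{t=0}^{T-1}\frac{1}{2\eta_t}\prn*{\norm{x_t-x\opt}^2-\norm{x_{t+1}-x\opt}^2}}_{(A)} \;+\; \underbrace{\sum_{t=0}^{T-1}\frac{\eta_t}{2}\alpha_t^2\norm{g_t}^2}_{(B)} .
\]
Indices $t$ with $\sum_{i\le t}\alpha_i^2\norm{g_i}^2 = 0$ have $\alpha_t g_t = 0$ and can be discarded, so $\eta_t$ is well-defined on the remaining indices.

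Introduce $v_t := \sum_{i=0}^t\alpha_i^2\norm{g_i}^2$ with $v_{-1}:=0$, so $\eta_t = R/\sqrt{v_t}$ and $1/\eta_t = \sqrt{v_t}/R$ is nondecreasing in $t$. For $(A)$ I would apply summation by parts to rewrite it as $\tfrac{1}{2\eta_0}\norm{x_0-x\opt}^2 + \tfrac12\sum_{t=1}^{T-1}\prn*{\tfrac1{\eta_t}-\tfrac1{\eta_{t-1}}}\norm{x_t-x\opt}^2 - \tfrac1{2\eta_{T-1}}\norm{x_T-x\opt}^2$. Since $x_t,x\opt\in\xset$ and $\text{diam}(\xset)\le R$, every $\norm{x_t-x\opt}^2$ is at most $R^2$; bounding each such term by $R^2$, dropping the last (nonpositive) term, and collapsing the telescoping sum (which is legitimate because $1/\eta_t-1/\eta_{t-1}\ge0$) gives $(A)\le \tfrac{R^2}{2\eta_{T-1}} = \tfrac{R}{2}\sqrt{v_{T-1}}$.

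For $(B)$, note $\tfrac{\eta_t}{2}\alpha_t^2\norm{g_t}^2 = \tfrac{R}{2}\cdot\tfrac{v_t-v_{t-1}}{\sqrt{v_t}}$, and use the elementary bound $\tfrac{v_t-v_{t-1}}{\sqrt{v_t}}\le 2\prn*{\sqrt{v_t}-\sqrt{v_{t-1}}}$, which follows from $\sqrt{v_t}-\sqrt{v_{t-1}} = \tfrac{v_t-v_{t-1}}{\sqrt{v_t}+\sqrt{v_{t-1}}}\ge \tfrac{v_t-v_{t-1}}{2\sqrt{v_t}}$; telescoping then gives $(B)\le \tfrac{R}{2}\cdot 2\sqrt{v_{T-1}} = R\sqrt{v_{T-1}}$. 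Adding the two estimates yields
\[
    \sum_{t=0}^{T-1}\alpha_t\inner{g_t}{x_t-x\opt} \le \tfrac{3R}{2}\sqrt{v_{T-1}} \le 2R\sqrt{\sum_{i=0}^{T-1}\alpha_i^2\norm{g_i}^2},
\]
which is the claim. Observe that nothing in the argument uses the particular value of $\clip$ or the form of $\alpha_t$ beyond the fact that $\eta_t$ scales like $1/\sqrt{v_t}$, matching the ``for any threshold $\clip$'' in the statement.

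There is no genuine obstacle here; the proof is routine AdaGrad bookkeeping. The only points that need a little care are the boundary terms and the sign of $1/\eta_t - 1/\eta_{t-1}$ in the summation by parts for $(A)$, the degenerate case $v_t = 0$, and noting that it is the diameter bound $\text{diam}(\xset)\le R$ that supplies $\norm{x_t-x\opt}\le R$ for every iterate.
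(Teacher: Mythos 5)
Your proof is correct and follows essentially the same route as the paper: the one-step projection inequality, summation by parts with the diameter bound $\norm{x_t-x\opt}\le R$ to telescope the $1/\eta_t$ terms into $R^2/(2\eta_{T-1})$, and the standard AdaGrad telescoping bound $\sum_t (v_t-v_{t-1})/\sqrt{v_t}\le 2\sqrt{v_{T-1}}$ for the second sum, giving $\tfrac{3R}{2}\sqrt{v_{T-1}}\le 2R\sqrt{v_{T-1}}$. Your explicit handling of the degenerate $v_t=0$ case and the dropped nonpositive boundary term are small tidying details the paper leaves implicit.
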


\begin{proof}
    Since $1/\eta_t$ is non-decreasing in $t$, a standard analysis shows that
    \allowdisplaybreaks
    \begin{align*}
        \sum_{t=0}^{T-1} \alpha_t \inner{g_t}{x_t - x\opt}
        &\leq \sum_{t=0}^{T-1} \frac{R_t^2 - R_{t+1}^2}{2 \eta_t} + \frac{1}{2} \sum_{t=0}^{T-1} \eta_t \alpha_t^2 \norm{g_t}^2 \\
        &\leq \frac{R_0^2}{2 \eta_0} + \frac{1}{2} \sum_{t=1}^{T-1} R_t^2 \prn*{\frac{1}{\eta_t} - \frac{1}{\eta_{t-1}}} + \frac{1}{2} \sum_{t=0}^{T-1} \eta_t \alpha_t^2 \norm{g_t}^2 \\
        &\leq \frac{R^2}{2 \eta_0} + \frac{R^2}{2} \sum_{t=1}^{T-1} \prn*{\frac{1}{\eta_t} - \frac{1}{\eta_{t-1}}} + \frac{1}{2} \sum_{t=0}^{T-1} \eta_t \alpha_t^2 \norm{g_t}^2 \\
        &\leq \frac{R^2}{2 \eta_{T-1}} + \frac{1}{2} \sum_{t=0}^{T-1} \eta_t \alpha_t^2 \norm{g_t}^2.
    \end{align*}    
    Define $S_t := \sum_{i=0}^t \alpha_i^2 \norm{g_i}^2$ and $S_{-1} = 0$. Observe that $\eta_t = \frac{R}{\sqrt{S_t}}$. Then we have
    \allowdisplaybreaks
    \begin{align*}
        \frac{1}{2} \sum_{t=0}^{T-1} \eta_t \alpha_t^2 \norm{g_t}^2
        = \frac{1}{2} \sum_{t=0}^{T-1} \eta_t (S_t - S_{t-1}) 
        &= \frac{R}{2} \sum_{i=0}^{T-1} \frac{S_t - S_{t-1}}{\sqrt{S_t}} \\
        &= \frac{R}{2} \sum_{i=0}^{T-1} \prn*{\sqrt{S_t} - \sqrt{S_{t-1}}} \frac{\sqrt{S_t} + \sqrt{S_{t-1}}}{\sqrt{S_t}} \\ 
        &\leq \frac{R}{2} \sum_{i=0}^{T-1} 2 \prn*{\sqrt{S_t} - \sqrt{S_{t-1}}}  
        = R \sqrt{S_{T-1}}.
    \end{align*}
    Therefore,
    \begin{align*}
        \sum_{t=0}^{T-1} \alpha_t \inner{g_t}{x_t - x\opt}
        \leq 2 R \sqrt{\sum_{i=0}^{T-1} \alpha_i^2 \norm{g_i}^2}.
    \end{align*}
\end{proof}

\begin{lemma} %
    \label{lemma:adaptiveSGD_bound_grad_norm}
    Let $f : \R^d \to \R$ and suppose \Cref{assump:convex,assump:lzo,assump:bounded_noise} hold. Additionally, assume $6 \sigma \leq c$ and $c \leq \norm{\dbl{g_t}}$. Then for any $g \in \mc{G}(x_t)$,
    \begin{align*}
        \alpha_t^2 \norm{g}^2
        \leq 4 \prn*{L_0 + c L_1} \alpha_t \Delta_t.
    \end{align*}
\end{lemma}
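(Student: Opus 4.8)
The plan is to mirror the argument of \Cref{lemma:bound_weighted_grad_norm}: when a stochastic gradient exceeds the clipping threshold, the threshold condition $6\sigma \le \clip \le \norm{\dbl{g_t}}$ forces the true gradient to dominate the noise, so all quantities of interest are comparable to $\norm{\grad f(x_t)}$, and then \Cref{lemma:bound_on_grad_squared} does the rest. The one thing to be careful about is that the claimed bound carries only a single power of $\alpha_t$ on the right-hand side, so we must spend $\alpha_t \le 1$ and the clipping identity $\alpha_t\norm{\dbl{g_t}}=\clip$ in the right places.

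First I would establish the noise-domination bounds. From $6\sigma \le \clip \le \norm{\dbl{g_t}}$ I get $\sigma \le \tfrac15\norm{\grad f(x_t)}$, since otherwise $\norm{\dbl{g_t}} \le \norm{\dbl{g_t}-\grad f(x_t)} + \norm{\grad f(x_t)} < \sigma + 5\sigma = 6\sigma$, contradicting $6\sigma \le \norm{\dbl{g_t}}$. Hence every oracle sample at $x_t$ satisfies $\norm{\mc{G}(x_t) - \grad f(x_t)} \le \sigma \le \tfrac15\norm{\grad f(x_t)}$, and two triangle inequalities give $\tfrac45\norm{\grad f(x_t)} \le \norm{\mc{G}(x_t)} \le \tfrac65\norm{\grad f(x_t)}$; in particular this applies to both $g$ and $\dbl{g_t}$. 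Next, since $\clip \le \norm{\dbl{g_t}}$ we have $\alpha_t = \clip/\norm{\dbl{g_t}}$, so $\alpha_t\norm{\dbl{g_t}} = \clip$, and combining with $\norm{\grad f(x_t)} \le \tfrac54\norm{\dbl{g_t}}$ yields $\alpha_t\norm{\grad f(x_t)} \le \tfrac54\clip$.

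Finally I would chain these with \Cref{lemma:bound_on_grad_squared}, namely $\norm{\grad f(x_t)}^2 \le 2\prn*{L_0 + L_1\norm{\grad f(x_t)}}\Delta_t$:
\begin{align*}
    \alpha_t^2\norm{g}^2
    &\le \tfrac{36}{25}\,\alpha_t^2\norm{\grad f(x_t)}^2
    \le \tfrac{72}{25}\prn*{\alpha_t^2 L_0 + \alpha_t^2 L_1\norm{\grad f(x_t)}}\Delta_t \\
    &\le \tfrac{72}{25}\prn*{\alpha_t L_0 + \tfrac54\,\clip L_1\,\alpha_t}\Delta_t
    \le 4\prn*{L_0 + \clip L_1}\alpha_t\Delta_t,
\end{align*}
where the third step uses $\alpha_t \le 1$ on the $L_0$ term and $\alpha_t^2 L_1\norm{\grad f(x_t)} = \alpha_t L_1\cdot\alpha_t\norm{\grad f(x_t)} \le \tfrac54\clip L_1\alpha_t$ on the other, and the last step uses $\tfrac{72}{25} \le 4$ and $\tfrac{72}{25}\cdot\tfrac54 = \tfrac{18}{5} \le 4$.

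The proof is essentially routine given the earlier lemmas; the "obstacle," such as it is, is purely bookkeeping of the constants $6/5$, $5/4$, and $72/25$ so that the aggregate coefficient stays below $4$. Note that, unlike \Cref{lemma:bound_weighted_grad_norm}, no hypothesis on the step size $\eta_t$ enters — the statement concerns $\alpha_t$ alone — which is precisely why this lemma is the right building block for the AdaGrad-style step-size analysis used to prove \Cref{thm:adaptive}.
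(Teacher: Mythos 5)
Your proof is correct and essentially identical to the paper's: both establish noise domination from $6\sigma\le\clip\le\norm{\dbl{g_t}}$, pass to $\norm{\grad f(x_t)}$ via the $(4/5,6/5)$ sandwich, invoke \Cref{lemma:bound_on_grad_squared}, and finish using $\alpha_t\le 1$ and the clipping identity $\alpha_t\norm{\dbl{g_t}}=\clip$. The only cosmetic difference is that you convert $\alpha_t\norm{\grad f(x_t)}$ to $\tfrac54\clip$ directly, while the paper first replaces $\norm{\grad f(x_t)}$ by $\tfrac54\norm{\dbl{g_t}}$ and then applies $\alpha_t\norm{\dbl{g_t}}\le\clip$ — same constants, same ingredients.
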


\begin{proof}
    Our assumptions imply $6 \sigma \leq \norm{\dbl{g_t}}$, which in turn implies $\sigma \leq (1/5) \norm{\grad f(x_t)}$: otherwise,
    \begin{align*}
        \norm{\dbl{g_t}}
        \leq \norm{\dbl{g_t} - \grad f(x_t)} + \norm{\grad f(x_t)}
        < \sigma + 5 \sigma
        = 6 \sigma
        \leq \norm{\dbl{g_t}}.
    \end{align*}
    Therefore, every oracle query to $x_t$ satisfies
    \begin{align*}
        \norm{\grad f(x_t) - \mc{G}(x_t)} \leq \sigma \leq (1/5) \norm{\grad f(x_t)}.
    \end{align*}
    By the triangle inequalities, we obtain
    \begin{align*}
        \numberthis
        \label{eqn:low_noise_adaptive}
        (4/5) \norm{\grad f(x_t)}  
        \leq \norm{\mc{G}(x_t)} 
        \leq (6/5) \norm{\grad f(x_t)}.
    \end{align*}
    Therefore,
    \begin{align*}
        \alpha_t^2 \norm{g_t}^2
        &\overset{\eqref{eqn:low_noise_adaptive}}{\leq} (6/5)^2 \alpha_t^2 \norm{\grad f(x_t)}^2 \\
        &\overset{(i)}{\leq} (6/5)^2 \alpha_t^2 \cdot 2 \prn*{L_0 + \norm{\grad f(x_t)} L_1} \Delta_t \\
        &\overset{\eqref{eqn:low_noise_adaptive}}{\leq} \frac{(6/5)^2}{(4/5)} \alpha_t^2 \cdot 2 \prn*{L_0 + \norm{\dbl{g_t}} L_1} \Delta_t \\
        &\leq 4 \prn*{\alpha_t L_0 +\alpha_t \norm{\dbl{g_t}} L_1} \alpha_t \Delta_t 
        \overset{(ii)}{\leq} 4 \prn*{L_0 + \clip L_1} \alpha_t \Delta_t,
    \end{align*}
    where $(i)$ uses \Cref{lemma:bound_on_grad_squared} and $(ii)$ uses the definition of $\alpha_t$.
\end{proof}

\begin{lemma} %
    \label{lemma:adaptiveSGD_bound_tau1_suboptimality}
    Assume the setting of \Cref{lemma:adaptiveSGD_bound_grad_norm}, and assume that the threshold $\clip$ is no less than its value under ``conservative + adaptive clipping''. Additionally, assume that $\log_+ \prn*{\tfrac{1}{\delta}} \prn*{15 L_1 R}^2 \leq T$. Then
    \begin{align*}
        \alpha_t \Delta_t \geq \frac{1}{4} \frac{c}{L_1}
        \geq \frac{80}{T} \prn*{15 L_0 R^2 + \log_+ \prn*{\tfrac{1}{\delta}} \sigma R \sqrt{T}}.
    \end{align*}
\end{lemma}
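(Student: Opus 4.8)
The plan is to prove the two inequalities in turn, both under the standing hypothesis (inherited from \Cref{lemma:adaptiveSGD_bound_grad_norm}) that $t$ is a clipped iteration, so that $\alpha_t = \min\crl*{1,\clip/\norm{\dbl{g_t}}} = \clip/\norm{\dbl{g_t}}$, hence $\alpha_t\norm{\dbl{g_t}} = \clip$ and $\alpha_t \le 1$.

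For the first inequality I would \emph{not} invoke \Cref{lemma:adaptiveSGD_bound_grad_norm} as a black box, since its constant ($4$ in place of $2$) is too lossy to reach the factor $\tfrac14$; instead I would rerun its argument keeping constants tight. Being in the setting of that lemma we have $6\sigma \le \clip \le \norm{\dbl{g_t}}$, so (exactly as in the opening lines of its proof) $\sigma \le \tfrac15\norm{\grad f(x_t)}$ and therefore $\tfrac45\norm{\grad f(x_t)} \le \norm{\dbl{g_t}} \le \tfrac65\norm{\grad f(x_t)}$; multiplying by $\alpha_t$ and using $\alpha_t\norm{\dbl{g_t}} = \clip$ gives the sandwich $\tfrac56\clip \le \alpha_t\norm{\grad f(x_t)} \le \tfrac54\clip$. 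I would then take the \lzo inequality of \Cref{lemma:bound_on_grad_squared} at $x=x_t$ and multiply both sides by $\alpha_t^2$:
\begin{align*}
    \prn*{\alpha_t\norm{\grad f(x_t)}}^2 \le 2\alpha_t\prn*{\alpha_t L_0 + L_1\,\alpha_t\norm{\grad f(x_t)}}\Delta_t .
\end{align*}
Substituting the lower bound of the sandwich on the left and the upper bound on the right, and then $\alpha_t \le 1$, this reduces to $\tfrac{25}{36}\clip^2 \le 2\prn*{L_0 + \tfrac54\clip L_1}\alpha_t\Delta_t$. Finally, to absorb the $L_0$ term I would use that $\clip$ is at least a fixed multiple of $L_0/L_1$: for the ``adaptive'' threshold this is immediate since $\clip = \tfrac1{L_1}\max\crl*{10L_0,\tfrac{\sqrt T}{R}\sigma} \ge 10L_0/L_1$, and for the ``conservative + adaptive'' threshold it follows by combining its explicit value with the assumption $T \ge \log_+\prn*{\tfrac1\delta}(15L_1R)^2$. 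Given $\clip \ge \tfrac{36}{5}L_0/L_1$ we have $L_0 \le \tfrac5{36}\clip L_1$, so $L_0 + \tfrac54\clip L_1 \le \tfrac{25}{18}\clip L_1$, and the displayed inequality rearranges to $\alpha_t\Delta_t \ge \tfrac14\clip/L_1$.

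The second inequality is arithmetic with the entries of \Cref{tab:parameters}. From $\clip \ge 15\sqrt{\log_+\prn*{\tfrac T\delta}}\tfrac R{\sqrt T}\max\crl*{10L_0,\tfrac{\sqrt T}{R}\sigma}$ and the rearranged assumption $\tfrac1{L_1} \ge 15R\sqrt{\log_+\prn*{\tfrac1\delta}}/\sqrt T$, multiplying and using $\log_+\prn*{\tfrac T\delta} \ge \log_+\prn*{\tfrac1\delta}$ gives $\tfrac14\clip/L_1 \ge c_0\,\tfrac{R^2\log_+\prn*{\tfrac1\delta}}{T}\max\crl*{10L_0,\tfrac{\sqrt T}{R}\sigma}$ for an explicit constant $c_0$; splitting the maximum via $\max\crl*{a,b}\ge\tfrac12(a+b)$ produces an $L_0R^2/T$ term and a $\sigma R\log_+\prn*{\tfrac1\delta}/\sqrt T$ term, which together dominate $\tfrac{80}{T}\prn*{15L_0R^2 + \log_+\prn*{\tfrac1\delta}\sigma R\sqrt T}$.

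The conceptual content is light; the difficulty is bookkeeping. The one step needing genuine care is absorbing the $L_0$ term in the first inequality, i.e.\ certifying $\clip \ge \tfrac{36}{5}L_0/L_1$ for every admissible threshold and every $T$ meeting the stated lower bound; after that, tracking the $\tfrac45,\tfrac65$ noise-domination constants so that the first inequality's leading constant comes out to exactly $\tfrac14$, and matching the numerical constants in the second inequality, are routine.
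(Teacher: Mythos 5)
Your route to the first inequality is genuinely different from the paper's, and it is worth describing. The paper invokes \Cref{lemma:bound_on_grad} (the linear bound $\norm{\grad f(x_t)} \le \max\{3L_1\Delta_t,\, 6L_0/L_1\}$), rules out the second branch by observing $\norm{\grad f(x_t)} \ge \clip/(6/5) \ge 8L_0/L_1 > 6L_0/L_1$, and then computes directly
\begin{align*}
    \alpha_t\Delta_t = \frac{\clip}{\norm{\dbl{g_t}}}\Delta_t
    \ge \frac{\clip}{(6/5)\norm{\grad f(x_t)}}\Delta_t
    \ge \frac{\clip}{(6/5)\cdot 3L_1\Delta_t}\Delta_t
    = \frac{5}{18}\frac{\clip}{L_1}
    \ge \frac{1}{4}\frac{\clip}{L_1}.
\end{align*}
You instead stay with the squared inequality of \Cref{lemma:bound_on_grad_squared}, multiply by $\alpha_t^2$, sandwich $\alpha_t\norm{\grad f(x_t)}$ between $\tfrac56\clip$ and $\tfrac54\clip$, and absorb the $L_0$ term via $L_0 \le \tfrac5{36}\clip L_1$. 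Your algebra is correct and also lands exactly on $\tfrac14\clip/L_1$, and you are right that citing \Cref{lemma:adaptiveSGD_bound_grad_norm} as a black box (with its constant $4$) would be too lossy. The second inequality in your proposal follows the same rearrangement-and-$\max\{a,b\}\ge(a+b)/2$ path as the paper, just with the constants left unchecked.

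Where I would push back is precisely the spot you flag as needing genuine care: certifying $\clip \ge \tfrac{36}{5}L_0/L_1$. You claim that for the ``conservative + adaptive'' threshold this ``follows by combining its explicit value with the assumption $T \ge \log_+(\tfrac1\delta)(15L_1R)^2$,'' but the direction of that implication is wrong. The conservative threshold $\clip = 15\sqrt{\log_+(T/\delta)}\,\tfrac{R}{\sqrt T}\max\{10L_0,\tfrac{\sqrt T}{R}\sigma\}$ scales like $R/\sqrt T$ (up to a slowly-growing log factor), so a \emph{lower} bound on $T$ yields an \emph{upper} bound on $\clip$, not a lower one; for $T$ well above the minimum admissible value (and small $\sigma$), $\clip$ can fall below any fixed multiple of $L_0/L_1$. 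Note that the paper's own proof requires exactly the same fact --- it writes $\clip/(6/5) \ge 8L_0/L_1$ with no justification --- so this is not a flaw unique to your write-up, but your sketch should not claim the inequality follows routinely when, for the conservative threshold and arbitrary admissible $T$, it does not. For the non-conservative ``adaptive'' threshold your check $\clip \ge 10L_0/L_1$ is indeed immediate and correct.
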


\begin{proof}
    By \Cref{lemma:adaptiveSGD_bound_grad_norm} we have that \Cref{eqn:low_noise_adaptive} applies. Therefore,
    \begin{gather*}
        \norm{\grad f(x_t)} 
        \geq \frac{\norm{\dbl{g_t}}}{6/5} 
        \geq \frac{\clip}{6/5} \geq \frac{8 L_0}{L_1},
    \end{gather*}
    and together with \Cref{lemma:bound_on_grad} we get $\norm{\grad f(x_t)} \leq 3 L_1 \Delta_t$.
    This leads to
    \begin{align*}
        \alpha_t \Delta_t
        = \frac{\clip}{\norm{\dbl{g_t}}} \Delta_t
        \geq \frac{1}{6/5} \frac{\clip}{\norm{\grad f(x_t)}} \Delta_t
        \geq \frac{1}{6/5} \frac{\clip}{3 L_1 \Delta_t} \Delta_t
        \geq \frac{1}{4} \frac{\clip}{L_1}.
    \end{align*}
    By our assumptions on $\clip$, we have
    \begin{align*}
        \frac{1}{4} \frac{c}{L_1}
        &\geq 15 \sqrt{\log_+ \prn*{\tfrac{1}{\delta}}} \frac{R}{\sqrt{T}} c \\
        &\geq 15 \sqrt{\log_+ \prn*{\tfrac{1}{\delta}}} \frac{R}{\sqrt{T}} \cdot 15 \sqrt{\log_+ \prn*{\tfrac{1}{\delta}}} \frac{R}{\sqrt{T}} \max \crl*{10 L_0, \frac{\sqrt{T}}{R} \sigma} \\
        &= 256 \log_+ \prn*{\tfrac{1}{\delta}} \frac{R^2}{T} \max \crl*{10 L_0, \frac{\sqrt{T}}{R} \sigma} \\
        &\geq 125 \log_+ \prn*{\tfrac{1}{\delta}} \frac{1}{T} \prn*{10 L_0 R^2 + \sigma R \sqrt{T}} \\
        &\geq \frac{80}{T} \prn*{15 L_0 R^2 + \log_+ \prn*{\tfrac{1}{\delta}} \sigma R \sqrt{T}},
    \end{align*}
    where the last two inequalities are purposefully loose for later convenience.
\end{proof}

\begin{lemma} %
    \label{lemma:adaptiveSGD_martingale}
    Assume the setting of \Cref{lemma:adaptiveSGD_stepsize}. Then for any $\delta \in (0,1)$, with probability at least $1 - \delta$,
    \begin{align*}
        \sum_{t=0}^{T-1} \alpha_t \Delta_t 
        &\leq 32 \prn*{L_0 + \clip L_1} R^2 + 9 \log_+ \prn*{\tfrac{1}{\delta}} \sigma R \sqrt{T}.
    \end{align*}
    If we also assume the setting of \Cref{lemma:adaptiveSGD_bound_grad_norm}, then
    \begin{align*}
        \sum_{t=0}^{T-1} \alpha_t \Delta_t 
        \leq 25 \prn*{15 L_0 R^2 + \log_+ \prn*{\tfrac{1}{\delta}} \sigma R \sqrt{T}}.
    \end{align*}
\end{lemma}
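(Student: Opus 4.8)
The plan is to run the AdaGrad-style argument of \Cref{lemma:martingale}, but with the fixed-stepsize bookkeeping replaced by the adaptive bound of \Cref{lemma:adaptiveSGD_stepsize} and with the martingale control simplified by the bounded diameter. By convexity $\Delta_t \le \inner{\grad f(x_t)}{x_t - x\opt}$, so it suffices to bound $\sum_{t=0}^{T-1}\alpha_t\inner{\grad f(x_t)}{x_t - x\opt}$, which I split as $S_1 + S_2$ with $S_1 = \sum_t \alpha_t\inner{g_t}{x_t-x\opt}$ and $S_2 = \sum_t \alpha_t\inner{\grad f(x_t)-g_t}{x_t-x\opt}$. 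Since we are in the setting of \Cref{lemma:adaptiveSGD_stepsize}, that lemma gives $S_1 \le 2R\sqrt{\sum_{i=0}^{T-1}\alpha_i^2\norm{g_i}^2}$ directly.

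For $S_2$: thanks to the \emph{double sampling}, $\alpha_t$ is a deterministic function of $\dbl{g_t}$ and hence conditionally independent of $g_t$ given $(x_t,\dbl{g_t})$, so $Z_t := \alpha_t\inner{\grad f(x_t)-g_t}{x_t-x\opt}$ has conditional mean $0$ and the sequence $Z_0,\dots,Z_{T-1}$ is a \emph{martingale difference sequence} adapted to the filtration generated by $(x_t,\dbl{g_t})_t$. Because $\text{diam}(\xset)\le R$ forces $\norm{x_t-x\opt}\le R$ and the noise is bounded by $\sigma$ almost surely, $\abs{Z_t}\le \alpha_t\sigma R\le \sigma R$ deterministically. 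Applying \Cref{lemma:sub_gaussian} with $\sigma_t := \sigma R$ and $\lambda := (\sigma R\sqrt T)^{-1}$ gives, with probability at least $1-\delta$, $S_2 \le \tfrac34\sigma R\sqrt T + \sigma R\sqrt T\log(1/\delta)\le \log_+\prn*{\tfrac1\delta}\sigma R\sqrt T$. Here the bounded diameter is what lets us avoid the $R_{\max,t}$ induction of \Cref{lemma:martingale}.

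Next I bound $G:=\sum_{i=0}^{T-1}\alpha_i^2\norm{g_i}^2$ by splitting iterations into ``high noise'' ($\norm{g_t}\le 6\sigma$) and ``low noise'' ($\norm{g_t}>6\sigma$). In the high-noise case $\alpha_t\le 1$ gives $\alpha_t^2\norm{g_t}^2\le 36\sigma^2$. In the low-noise case, $6\sigma<\norm{g_t}$ forces $\norm{g_t-\grad f(x_t)}\le\sigma\le\tfrac16\norm{g_t}$, hence $\norm{g_t}\le\tfrac65\norm{\grad f(x_t)}$; combining $\alpha_t\norm{\dbl{g_t}}\le\clip$ (definition of $\alpha_t$) with $\norm{g_t-\dbl{g_t}}\le 2\sigma\le\tfrac13\norm{g_t}$ yields $\alpha_t\norm{g_t}\le\tfrac32\clip$ and hence $\alpha_t\norm{\grad f(x_t)}\le\tfrac74\clip$. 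Feeding this into the \lzo inequality $\norm{\grad f(x_t)}^2\le 2(L_0+L_1\norm{\grad f(x_t)})\Delta_t$ from \Cref{lemma:bound_on_grad_squared} gives $\alpha_t^2\norm{g_t}^2\le c_0(L_0+\clip L_1)\,\alpha_t\Delta_t$ for an absolute constant $c_0$ (under the extra hypothesis $6\sigma\le\clip$, \Cref{lemma:adaptiveSGD_bound_grad_norm} supplies an equivalent estimate). Summing, $G\le c_0(L_0+\clip L_1)A+36T\sigma^2$ where $A:=\sum_t\alpha_t\Delta_t$. Plugging the $S_1,S_2$ bounds into $A\le S_1+S_2$ and using $\sqrt{a+b}\le\sqrt a+\sqrt b$, this becomes $A\le 2R\sqrt{c_0(L_0+\clip L_1)}\,\sqrt A + 12\sigma R\sqrt T + \log_+\prn*{\tfrac1\delta}\sigma R\sqrt T$, a quadratic inequality in $\sqrt A$; solving it (equivalently, by AM--GM) and using $\log_+\prn*{\tfrac1\delta}\ge 2$, a careful accounting of constants yields the first bound $A\le 32(L_0+\clip L_1)R^2+9\log_+\prn*{\tfrac1\delta}\sigma R\sqrt T$. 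The second bound follows by substituting the adaptive / adaptive+conservative threshold of \Cref{tab:parameters}, for which $\clip L_1=\max\{10L_0,\sqrt T\sigma/R\}\le 10L_0+\sqrt T\sigma/R$ (this is exactly where $6\sigma\le\clip$, the setting of \Cref{lemma:adaptiveSGD_bound_grad_norm}, is used for consistency), and collecting terms again with $\log_+\ge 2$.

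The main obstacle is the use of \Cref{lemma:adaptiveSGD_bound_grad_norm} inside the third step: one must transfer the clipping guarantee from the steering sample $\dbl{g_t}$, where $\alpha_t\norm{\dbl{g_t}}\le\clip$ holds by construction, onto the independent update sample $g_t$ using only the noise bound, and then convert $\alpha_t^2\norm{g_t}^2$ into a controlled multiple of $\alpha_t\Delta_t$ via \lzo-smoothness — all while keeping every constant tight enough to land the stated $32$, $9$, $25$, and $15$. Once that estimate is in hand, the apparent circularity, with $A=\sum_t\alpha_t\Delta_t$ on both sides, is routine to dispatch via the quadratic-inequality trick familiar from AdaGrad analyses.
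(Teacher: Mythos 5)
Your proof follows essentially the same route the paper takes: decompose $\sum_t \alpha_t\inner{\grad f(x_t)}{x_t - x\opt}$ into $S_1 + S_2$, bound $S_1$ via \Cref{lemma:adaptiveSGD_stepsize}, bound $S_2$ via \Cref{lemma:sub_gaussian} with the deterministic increment bound $\abs{Z_t}\le \sigma R$ coming from the diameter, then control $G := \sum_i\alpha_i^2\norm{g_i}^2$ by a high/low-noise case split and close the quadratic inequality in $A$ (paper's \Cref{lemma:adaptive_algebra}). The structure, the lemmas invoked, and the reason the adaptive case is simpler than \Cref{lemma:martingale} (no $R_{\max,t}$ induction needed because $\norm{x_t - x\opt}\le R$ always) are all identical to the paper.

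The one place you are not tight enough to actually land the stated constants is the low-noise estimate on $\alpha_t^2\norm{g_t}^2$. You route the argument through $g_t \to \grad f(x_t)$ and separately $g_t \to \dbl{g_t} \to \clip$, picking up factors $(6/5)^2$ and $(3/2)^2$ (the latter from $\norm{\dbl{g_t}}\ge \tfrac{2}{3}\norm{g_t}$), which gives $c_0 \approx \tfrac{27}{5}$. The paper's \Cref{lemma:adaptiveSGD_bound_grad_norm} instead goes $g_t \to \grad f(x_t) \to \dbl{g_t} \to \clip$ directly, landing on the cleaner constant $c_0 = 4$, namely $\alpha_t^2\norm{g_t}^2\le 4(L_0+\clip L_1)\alpha_t\Delta_t$. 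After squaring in the quadratic-inequality step, $c_0$ multiplies into the leading coefficient of $A$ as $8R^2 c_0$, so the paper's $c_0=4$ yields the stated $32(L_0+\clip L_1)R^2$ while your $c_0\approx 5.4$ yields roughly $43(L_0+\clip L_1)R^2$. Your assertion that "a careful accounting of constants yields the first bound" therefore does not hold with your chain; you should replace that step with the direct estimate of \Cref{lemma:adaptiveSGD_bound_grad_norm} (which applies because the low-noise condition already gives $\sigma \le \tfrac{1}{5}\norm{\grad f(x_t)}$, the only fact the chain in that lemma actually uses). With that swap, the rest of your argument — including your alternative choice of $\lambda$ for the martingale term, and your case split on $\norm{g_t}\gtrless 6\sigma$ rather than the paper's $\sigma\gtrless\tfrac{1}{5}\norm{\grad f(x_t)}$ — is fine and reaches the same place.
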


\begin{proof}
    The proof first uses a martingale concentration inequality and \Cref{lemma:adaptiveSGD_stepsize} to obtain a high-probability bound, and then continues to bound the stochastic gradient norms.

    Define $Z_t := \alpha_t \inner{\grad f(x_t) - g_t}{x_t - x\opt}$. Recall from \Cref{tab:parameters} that $\alpha_t$ is deterministic given $\dbl{g_t}$. Then the sequence $Z_0,...,Z_{T-1}$ is a \textit{martingale difference sequence} with respect to $\xi_{i-1} = (x_i, \dbl{g_i})$:
    \begin{align*}
        \Ex*{Z_t | x_t, \dbl{g_t}}
        = \alpha_t \inner*{\grad f(x_t) - \Ex*{g_t | x_t, \dbl{g_t}}} {x_t - x\opt}
        = 0.
    \end{align*}
    Each $Z_t$ satisfies
    \begin{align*}
        |Z_t| 
        \leq \alpha_t \norm{\grad f(x_t) - g_t} \norm{x_t - x\opt}
        \leq \alpha_t \sigma R_t
        \leq \sigma R_t
        \leq \sigma R.
    \end{align*}
    Therefore, by applying \Cref{lemma:sub_gaussian} with $\lambda' := \prn*{4 \sigma R \sqrt{T}}^{-1}$ we have that for any $\delta \in (0,1)$, with probability at least $1 - \delta$, 
    \begin{align*}
        \sum_{s=0}^{T-1} \alpha_s \inner{\grad f(x_s) - g_s}{x_s - x\opt}
        &\leq \frac{3}{4} \lambda' \sum_{s=0}^{T-1} \sigma^2 R^2 + \frac{1}{\lambda'} \log \prn*{\tfrac{1}{\delta}} \\
        &\leq \frac{3}{4} \lambda' T \sigma^2 R^2 + \frac{1}{\lambda'} \log \prn*{\tfrac{1}{\delta}} \\
        &\leq \frac{3}{16} \sigma R \sqrt{T} + 4 \log \prn*{\tfrac{1}{\delta}} \sigma R \sqrt{T}.
    \end{align*}

    Recall that \Cref{lemma:adaptiveSGD_stepsize} bounds $\sum_{t=0}^{T-1} \alpha_t \inner{g_t}{x_t - x\opt}$. By summing that bound with our inequality, and by applying the gradient inequality, we get 
    \begin{align*}
        \sum_{s=0}^{T-1} \alpha_s \Delta_s
        &\leq \sum_{s=0}^{T-1} \alpha_s \inner{\grad f(x_s)}{x_s - x\opt} \\
        &\leq \frac{3}{16} \sigma R \sqrt{T} + 4 \log \prn*{\tfrac{1}{\delta}} \sigma R \sqrt{T} + 2 R \sqrt{\sum_{i=0}^{T-1} \alpha_i^2 \norm{g_i}^2}.
    \end{align*}
    On iterations where $\sigma \geq (1/5) \norm{\grad f(x_t)}$, we have
    \begin{align*}
        \alpha_t^2 \norm{g_t}^2
        \leq \norm{g_t}^2
        \leq \prn*{\norm{g_t - \grad f(x_t)} + \norm{\grad f(x_t)}}^2
        \leq \prn*{\sigma + 5 \sigma}^2
        = \prn*{6 \sigma}^2.
    \end{align*}
    On iterations where $\sigma < (1/5) \norm{\grad f(x_t)}$, \Cref{lemma:adaptiveSGD_bound_grad_norm} shows that
    \begin{align*}
        \alpha_t^2 \norm{g_t}^2
        \leq 4 \prn*{L_0 + \clip L_1} \alpha_t \Delta_t.
    \end{align*}
    The two cases imply that every iteration satisfies $\alpha_t^2 \norm{g_t}^2 \leq 4 \prn*{L_0 + \clip L_1} \alpha_t \Delta_t + \prn*{6 \sigma}^2$. Therefore,
    \begin{align*}
        \sum_{t=0}^{T-1} \alpha_t \Delta_t
        &\leq \frac{3}{16} \sigma R \sqrt{T} + 4 \log \prn*{\tfrac{1}{\delta}} \sigma R \sqrt{T} + 2 R \sqrt{\sum_{i=0}^{T-1} \prn*{4 \prn*{L_0 + \clip L_1} \alpha_t \Delta_t + \prn*{6 \sigma}^2}} \\
        &= \frac{3}{16} \sigma R \sqrt{T} + 4 \log \prn*{\tfrac{1}{\delta}} \sigma R \sqrt{T} + 2 R \sqrt{4 \prn*{L_0 + \clip L_1} \sum_{i=0}^{T-1} \alpha_t \Delta_t + \prn*{6 \sigma}^2 T}
    \end{align*}
    The rest of the proof is pure algebra, which we show separately in the subsequent lemma (\Cref{lemma:adaptive_algebra}).
\end{proof}

\begin{lemma}
    \label{lemma:adaptive_algebra}
    If 
    \begin{align*}
        \sum_{t=0}^{T-1} \alpha_t \Delta_t
        &\le \frac{3}{16} \sigma R \sqrt{T} + 4 \log \prn*{\tfrac{1}{\delta}} \sigma R \sqrt{T} + 2 R \sqrt{4 \prn*{L_0 + \clip L_1} \sum_{i=0}^{T-1} \alpha_t \Delta_t + \prn*{6 \sigma}^2 T},
    \end{align*}
    then
    \begin{align*}
        \sum_{t=0}^T \alpha_t \Delta_t
        &\leq 25 \prn*{15 L_0 R^2 + \log_+ \prn*{\tfrac{1}{\delta}} \sigma R \sqrt{T}}.
    \end{align*}
\end{lemma}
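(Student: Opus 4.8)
The plan is to treat the hypothesis as a self-referential inequality in the single nonnegative quantity $S := \sum_{t=0}^{T-1} \alpha_t \Delta_t$ (reading the sum over $t = 0,\ldots,T$ in the conclusion as the sum over $t = 0,\ldots,T-1$ that actually appears in the hypothesis and in \Cref{lemma:adaptiveSGD_martingale}). The hypothesis has the generic shape $S \le a + b\sqrt{\beta S + d}$ with $a := \prn*{\tfrac{3}{16} + 4\log\prn*{\tfrac1\delta}}\sigma R\sqrt{T}$, $b := 2R$, $\beta := 4\prn*{L_0 + \clip L_1}$, and $d := 36\sigma^2 T$. I would (i) solve this for $S$; (ii) substitute the actual constants — in particular the value of the threshold $\clip$, which for the adaptive rows of \Cref{tab:parameters} satisfies $\clip L_1 \le \max\crl*{10 L_0, \tfrac{\sqrt{T}}{R}\sigma}$, hence $\beta R^2 \le 44 L_0 R^2 + 4\sigma R\sqrt{T}$; and (iii) collapse all the bare additive constants into the $\log_+\prn*{\tfrac1\delta}\sigma R\sqrt{T}$ term using $\log_+\prn*{\tfrac1\delta} = 2 + \log\prn*{\tfrac1\delta} \ge 2$.

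For step (i): if $S \le a$ the target bound is immediate, so assume $S > a$; squaring gives $(S - a)^2 \le b^2(\beta S + d)$, i.e. the quadratic inequality $S^2 - (2a + b^2\beta)S + (a^2 - b^2 d) \le 0$. Bounding $S$ by the larger root, using $(2a + b^2\beta)^2 - 4a^2 = b^2\beta(4a + b^2\beta)$ to simplify the discriminant, and applying $\sqrt{x+y+z} \le \sqrt{x} + \sqrt{y} + \sqrt{z}$, I obtain $S \le a + b^2\beta + b\sqrt{a\beta} + b\sqrt{d}$; one more AM--GM step, $b\sqrt{a\beta} = \sqrt{(b^2\beta)\,a} \le \tfrac12(b^2\beta + a)$, puts this in the clean form $S \le \tfrac32 a + \tfrac32 b^2\beta + b\sqrt{d}$.

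For steps (ii)--(iii): $b\sqrt{d} = 12\sigma R\sqrt{T}$; $\tfrac32 b^2\beta = 6\beta R^2 \le 264 L_0 R^2 + 24\sigma R\sqrt{T}$; and $\tfrac32 a = \prn*{\tfrac{9}{32} + 6\log\prn*{\tfrac1\delta}}\sigma R\sqrt{T} \le 6\log_+\prn*{\tfrac1\delta}\sigma R\sqrt{T}$ since $\tfrac{9}{32} - 12 < 0$. Summing, the $L_0 R^2$ contribution is $264 L_0 R^2 \le 375 L_0 R^2$, and the $\sigma R\sqrt{T}$ contribution is $\prn*{6\log_+\prn*{\tfrac1\delta} + 36}\sigma R\sqrt{T} \le 24\log_+\prn*{\tfrac1\delta}\sigma R\sqrt{T}$ because $36 = 18\cdot 2 \le 18\log_+\prn*{\tfrac1\delta}$. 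Hence $S \le 264 L_0 R^2 + 24\log_+\prn*{\tfrac1\delta}\sigma R\sqrt{T} \le 25\prn*{15 L_0 R^2 + \log_+\prn*{\tfrac1\delta}\sigma R\sqrt{T}}$, as claimed.

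The only real obstacle is the bookkeeping of constants: the target constants $25$ and $15$ are deliberately loose but not by a wide margin, so one must use the sharp larger-root bound above rather than the cruder $S \le 2a + 2b^2\beta + 2b\sqrt{d}$ (which overshoots the $\sigma R\sqrt{T}$ coefficient), and one must not be cavalier about replacing $\log\prn*{\tfrac1\delta}$ by $\log_+\prn*{\tfrac1\delta}$ — the slack there is only the additive $2$, which is precisely why the statement is phrased with $\log_+$.
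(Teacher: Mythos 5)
Your proof is correct, and it follows the same overall strategy as the paper's: treat the hypothesis as a self-referential quadratic inequality in $\sum_t \alpha_t\Delta_t$, bound by the larger root, then substitute the threshold bound $\clip L_1 R^2 \le 10 L_0 R^2 + \sigma R\sqrt{T}$ and absorb bare constants into $\log_+(1/\delta)$ via $\log_+ \ge 2$. The only departure is algebraic: the paper keeps the non-radical terms on the right, wraps them in a square root, and uses $(x+y)^2 \le 2x^2+2y^2$ twice before squaring the whole inequality (yielding $A \le b + \sqrt{c}$ with $b = 2b^2\beta$ in your notation), whereas you isolate $a$ on the left, square exactly, expand the discriminant, and apply $\sqrt{x+y+z}\le\sqrt x+\sqrt y+\sqrt z$ plus AM--GM — arriving at the slightly sharper $\frac32 b^2\beta$. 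Both chains verify, and both land comfortably inside the target $25(15 L_0 R^2 + \log_+(1/\delta)\sigma R\sqrt T)$, so there is no substantive gap; your case split on $S \le a$ (needed to justify the squaring) is the one step the paper leaves implicit.
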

\begin{proof}
    Denote $A := \sum_{t=0}^T \alpha_t \Delta_t$. Then we have
    \begin{align*}
        A
        &\leq \frac{3}{16} \sigma R \sqrt{T} + 4 \log \prn*{\tfrac{1}{\delta}} \sigma R \sqrt{T} + 2 R \sqrt{4 \prn*{L_0 + \clip L_1} A + \prn*{6 \sigma}^2 T} \\
        &\leq \sqrt{\prn*{\frac{3}{16} \sigma R \sqrt{T} + 4 \log \prn*{\tfrac{1}{\delta}} \sigma R \sqrt{T}}^2} + 2 R \sqrt{4 \prn*{L_0 + \clip L_1} A + \prn*{6 \sigma}^2 T} \\
        &\leq \sqrt{2 \prn*{\frac{3}{16} \sigma R \sqrt{T}}^2 + 2 \prn*{4 \log \prn*{\tfrac{1}{\delta}} \sigma R \sqrt{T}}^2} + 2 R \sqrt{4 \prn*{L_0 + \clip L_1} A + \prn*{6 \sigma}^2 T}.
    \end{align*}
    Squaring both sides leads to
    \begin{align*}
        A^2
        &\leq \prn*{\sqrt{2 \prn*{\tfrac{3}{16} \sigma R \sqrt{T}}^2 + 2 \prn*{4 \log \prn*{\tfrac{1}{\delta}} \sigma R \sqrt{T}}^2} + 2 R \sqrt{4 \prn*{L_0 + \clip L_1} A + \prn*{6 \sigma}^2 T}}^2 \\
        &\leq 2 \prn*{\prn*{\tfrac{3}{16} \sigma R \sqrt{T}}^2 + 2 \prn*{4 \log \prn*{\tfrac{1}{\delta}} \sigma R \sqrt{T}}^2} + 2 \cdot 4 R^2 \prn*{4 \prn*{L_0 + \clip L_1} A + \prn*{6 \sigma}^2 T} \\
        &\leq \frac{9}{64} \prn*{\sigma R \sqrt{T}}^2 + 4 \prn*{4 \log \prn*{\tfrac{1}{\delta}} \sigma R \sqrt{T}}^2 + 32 R^2 (L_0 + \clip L_1) A + 8 \cdot 36 R^2 \sigma^2 T \\
        &= 32 R^2 (L_0 + \clip L_1) A + \prn*{289 + 64 \log^2 \prn*{\tfrac{1}{\delta}}} \prn*{\sigma R \sqrt{T}}^2.
    \end{align*}
    This is a quadratic inequality (in $A$) of the form $A^2 \leq bA + c$ ($b,c \geq 0$). In such cases, every solution is less than or equal to the largest root of $A^2 - bA - c$. Therefore,
    \begin{align*}
        A 
        \leq \tfrac{1}{2} \prn*{b + \sqrt{b^2 + 4c}}
        \leq \tfrac{1}{2} \prn*{b + \sqrt{b^2} + \sqrt{4c}}
        \leq b + \sqrt{c}.
    \end{align*}
    In our context, this leads to
    \allowdisplaybreaks
    \begin{align*}
        \sum_{t=0}^T \alpha_t \Delta_t
        &\leq 32 \prn*{L_0 + \clip L_1} R^2 + \sqrt{289 + 64 \log^2 \prn*{\tfrac{1}{\delta}}} \cdot \sigma R \sqrt{T} \\
        &\leq 32 \prn*{L_0 + \clip L_1} R^2 + \prn*{17 + 8 \log \prn*{\tfrac{1}{\delta}}} \cdot \sigma R \sqrt{T} \\
        &\leq 32 \prn*{L_0 + \clip L_1} R^2 + 9 \log_+ \prn*{\tfrac{1}{\delta}} \sigma R \sqrt{T} 
    \end{align*}
    From the assumption on $\clip$, we have
    \begin{align*}
        \clip L_1 R^2
        \leq \frac{1}{L_1} \max \crl*{10 L_0, \frac{\sqrt{T}}{R} \sigma} \cdot L_1 R^2
        \leq 10 L_0 R^2 + \sigma R \sqrt{T}.
    \end{align*}
    Therefore,
    \begin{align*}
        \sum_{t=0}^T \alpha_t \Delta_t
        &\le 32 \prn*{L_0 R^2 + 10 L_0 R^2 + \sigma R \sqrt{T}} + 9 \log_+ \prn*{\tfrac{1}{\delta}} \sigma R \sqrt{T} \\
        &\leq 352 L_0 R^2 + 25 \log_+ \prn*{\tfrac{1}{\delta}} \sigma R \sqrt{T} 
        \\
        &\leq 25 \prn*{15 L_0 R^2 + \log_+ \prn*{\tfrac{1}{\delta}} \sigma R \sqrt{T}}.
    \end{align*}

\end{proof}

\section{Proof of Theorem \ref{thm:adaptive}}

\restateAdaptiveSGDTheorem*

\begin{proof}
    We begin with a comment about the light tail assumption, and then continue with the proof. 

    \paragraph{Light-tailed noise vs. bounded noise.}
    The rest of the proof is written under a modified setting: It uses \Cref{assump:bounded_noise} (bounded noise) instead of \Cref{assump:light-tail} (light-tailed noise), and it adjusts $\eta_t$ and $\clip$ by replacing instances of $\ltsigma$ with $\sigma$. The proof obtains a bound that holds with probability at least $1 - \delta$. By \Cref{lemma:ltreduction}, with probability at least $1 - \delta$, using an oracle that satisfies \Cref{assump:light-tail} results in the same output as using an oracle that satisfies \Cref{assump:bounded_noise} with $\sigma' = \ltsigma$. By using a union bound, we get that the desired bound holds under \Cref{assump:light-tail} with probability at least $1 - 2 \delta$. 

    \paragraph{Proof under \Cref{assump:bounded_noise}.}
    Consider iterations that satisfy $t \in \mc{T}_1$, that is, iterations where $c \leq \norm{\dbl{g_t}}$. By \Cref{lemma:adaptiveSGD_bound_tau1_suboptimality} we have
    \begin{align*}
        \alpha_t \Delta_t
        \geq \frac{80}{T} \prn*{15 L_0 R^2 + \log_+ \prn*{\tfrac{1}{\delta}} \sigma R \sqrt{T}}.
    \end{align*}
    Therefore, together with \Cref{lemma:adaptiveSGD_martingale}, we get that with probability at least $1 - \delta$,
    \begin{align*}
        \frac{80}{T} \prn*{15 L_0 R^2 + \log_+ \prn*{\tfrac{1}{\delta}} \sigma R \sqrt{T}} |\mc{T}_1|
        &\leq \sum_{t \in \mc{T}_1} \alpha_t \Delta_t \\
        &\leq \sum_{t=1}^T \alpha_t \Delta_t \\
        &\leq 25 \prn*{15 L_0 R^2 + \log_+ \prn*{\tfrac{1}{\delta}} \sigma R \sqrt{T}}
    \end{align*}
    From this it follows that $|\mc{T}_1| \leq \frac{T}{2}$, and therefore $|\mc{T}_2| \geq \frac{T}{2}$. 
    
    Similarly, by \Cref{lemma:adaptiveSGD_martingale} we have,
    \begin{gather*}
        \sum_{t \in \mc{T}_2} \Delta_t 
        \leq \sum_{t=1}^T \alpha_t \Delta_t
        \leq 25 \prn*{15 L_0 R^2 + \log_+ \prn*{\tfrac{1}{\delta}} \sigma R \sqrt{T}}.
    \end{gather*}
    Dividing by $|\mc{T}_2|$, using the bound we obtained on $|\mc{T}_2|$ and using Jensen's inequality, we get 
    \begin{align*}
        f \prn*{\frac{1}{|\mc{T}_2|} \sum_{t \in \mc{T}_2} x_t} - f(x\opt)
        \leq \frac{1}{|\mc{T}_2|} \sum_{t \in \mc{T}_2} \Delta_t
        &\leq \frac{25 \prn*{15 L_0 R^2 + \log_+ \prn*{\tfrac{1}{\delta}} \sigma R \sqrt{T}},}{|\mc{T}_2|} \\
        &\leq \frac{50 \prn*{15 L_0 R^2 + \log_+ \prn*{\tfrac{1}{\delta}} \sigma R \sqrt{T}},}{T}.
    \end{align*}
    Recall from \Cref{alg:clippedSGDdouble} that $|\mc{T}_2| \geq \frac{T}{2}$ implies $\bar{x} := \frac{1}{|\mc{T}_2|} \sum_{t \in \mc{T}_2} x_t$. Therefore the proof is complete.
\end{proof} 
\section{The potentially exponential dependence of $\norm{\grad f(x)}$ and \mbox{$f(x) - f(x\opt)$} on $L_1 \norm{x - x\opt}$}\label{app:smooth_example}

Let $L_0,L_1 \in \R_+$ and let $f(x) = \frac{L_0}{L_1^2} \cosh(L_1 x)$. We will show that $f$ is \lzo-smooth. The first and second derivatives are
\begin{align*}
    f'(x) = \frac{L_0}{L_1} \sinh(L_1 x) 
    \; ; \;
    f''(x) = L_0 \cosh(L_1 x).
\end{align*}
For any $x \in \R$, by basic properties of $\cosh$ and $\sinh$, it holds that
\begin{align*}
    |f''(x)| 
    = L_0 \cosh(L_1 x) 
    = L_0 \prn*{e^{- L_1 x} + \sinh(L_1 x)} 
    = L_0 e^{- L_1 x} + L_1 f'(x).
\end{align*}
When $x \geq 0$ we have $e^{- L_1 x} \leq 1$ and $f'(x) = |f'(x)|$. Therefore, 
\begin{align*}
    |f''(x)| 
    \leq L_0 + L_1 |f'(x)|.
\end{align*}
When $x < 0$ we have $e^{-L_1 (-x)} \leq 1$ and $f'(-x) = |f'(x)|$. Therefore, 
\begin{align*}
    |f''(-x)|
    \leq L_0 + L_1 |f'(x)|,
\end{align*}
and since $f''$ is an even function, the same holds for $|f''(x)|$. In both cases we end up with the inequality that defines \lzo-smoothness.

Let us discuss, in the context of the above $f$, quantities that appear in bounds of related work. Let $x_0 \in \R$. The minimizer of $f$ is $x\opt = 0$, therefore $\norm{x_0 - x\opt} = |x_0|$. The gradient norm at $x = x_0$ satisfies 
\begin{align*}
    \norm{\grad f(x_0)} 
    = \frac{L_0}{L_1} | \sinh(L_1 x_0) | 
    = \frac{L_0}{L_1} \sinh(L_1 |x_0|)
    &\geq \frac{L_0}{2 L_1} \prn*{\exp(L_1 |x_0|) - 1} \\
    &= \frac{L_0}{2 L_1} \prn*{\exp(L_1 \norm{x_0 - x\opt}) - 1}.
\end{align*}
Additionally, the sub-optimality at $x = x_0$ satisfies
\begin{align*}
    f(x_0) - f(x\opt) 
    = \frac{L_0}{L_1^2} \prn*{\cosh(L_1 x_0) - 1} 
    &= \frac{L_0}{L_1^2} \prn*{\cosh(L_1 |x_0|) - 1} \\
    &\geq \frac{L_0}{2 L_1^2} \prn*{\exp(L_1 |x_0|) - 2}
    = \frac{L_0}{L_1^2} \prn*{\exp(L_1 \norm{x_0 - x\opt}) - 2}.
\end{align*}
Both quantities demonstrate an exponential dependence on $L_1 \norm{x_0 - x\opt}$. 
\section{Experiments}\label{app:experiments}

We present additional implementation details, results of the experiments on second dataset, and results obtained with synthetic data (not modeled as a regression task).

\subsection{Implementation details of real-data experiments}

\paragraph{Data and computational resources.} Our experiments use the California Housing dataset \citep{pace1997sparse} and the Parkinsons Telemonitoring dataset \citep{tsanas2009accurate}, which are published under ``CC0'' and ``CC-BY 4.0'' licenses, respectively. All experiments provided in this paper were run on Google Colab (with a free account) using an NVIDIA T4 GPU.

\paragraph{Data preprocessing.} 
The data preparation begins by obtaining the dataset $(X,y)$, where $X \in \R^{n \times d}$ represents $n$ samples with $d$ features per sample, and $y \in \R^n$ represents the targets. Missing data in numerical features is replaced with the mean value, and missing data in categorical features is replaced with the most frequent value. Numeric features, as well as the targets, are standardized to have zero mean and unit variance, and categorical features are encoded as one-hot vectors. The samples are then shuffled. Finally, a column $\vec{\mathbf{1}} \in \R^{n \times 1}$ is prepended to $X$ in order to have a bias term in the regression task.

\paragraph{Stepsize and clipping threshold tuning.}
We determine the clipping threshold $c$ of each method by tuning it, avoiding reliance on theoretical quantities from the definitions in \Cref{tab:parameters}. Similarly, we modify the parameter $\eta_t$ by replacing theoretical quantities with some tunable variable, which we denote as $lr$. For methods with a fixed stepsize, we simply set $\eta = lr$. For methods based on Adaptive SGD we set $\eta_t = lr \cdot \prn{\sum_{i=0}^t \alpha_i^2 \norm{g_i}^2}^{-1/2}$, and for ``implicit clipping'' we set $\eta_t = lr \cdot c / \prn*{c + \norm{\dbl{g_t}}}$ (see Section 3.1 on \citet{zhang2019why} for intuition). 

We tune $lr$ and $c$ by performing a two-level, two-dimensional grid search. In the first-level grid, the values are geometrically spaced by a factor of 10: The values for $c$ are $(10^2,\ldots,10^7)$. The values for $lr$ are $(10^{-10}, \ldots, 10^{-5})$ for SGD, $(10^{-7}, \ldots, 10^{-2})$ for clipped SGD, and $(10^{-3}, \ldots, 10^{2})$ for both Adaptive SGD and clipped Adaptive SGD. We verify that the best candidate is never at the edge of the grid. Denoting the best candidate as $(lr^1, c^1)$, the second-level grid is defined as $\crl*{(lr,c) \mid lr \in (\frac{1}{4} lr^1, \frac{1}{2} lr^1, lr^1, 2 lr^1, 4 lr^1), c \in (\frac{1}{4} c^1, \frac{1}{2} c^1, c^1, 2 c^1, 4 c^1)}$. 

\subsection{Additional experiments}

\paragraph{Parkinsons Telemonitoring dataset.} 
We repeat the experiments presented in \Cref{sec:experiments} on the Parkinsons Telemonitoring dataset \citep{tsanas2009accurate}. The results are displayed in \Cref{fig:figure1_parkinsons,fig:figure2_parkinsons} There are two notable distinctions between the results here and the results in \Cref{sec:experiments}: In \Cref{fig:clip_vs_noclip_2_parkinsons}, clipped Adaptive SGD performs worse than the others, which show similar performance. In \Cref{fig:avg_parkinsons}, the two averaging methods seem identical.

\begin{figure}
    \begin{center}
	\subfloat[]{\includegraphics[width=0.32\textwidth]{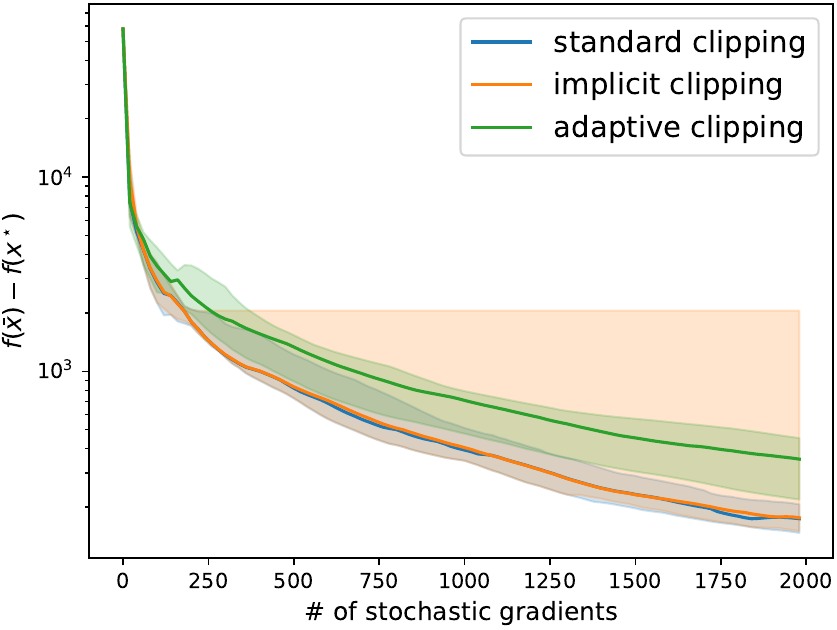} \label{fig:general_parkinsons}}
	\hfill
	\subfloat[]{\includegraphics[width=0.32\textwidth]{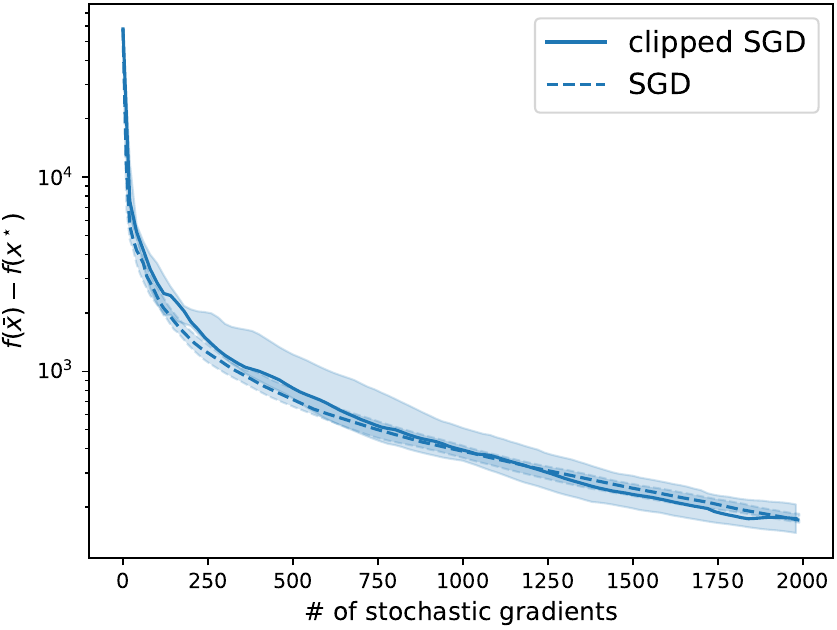} \label{fig:clip_vs_noclip_parkinsons}}
	\hfill
	\subfloat[]{\includegraphics[width=0.32\textwidth]{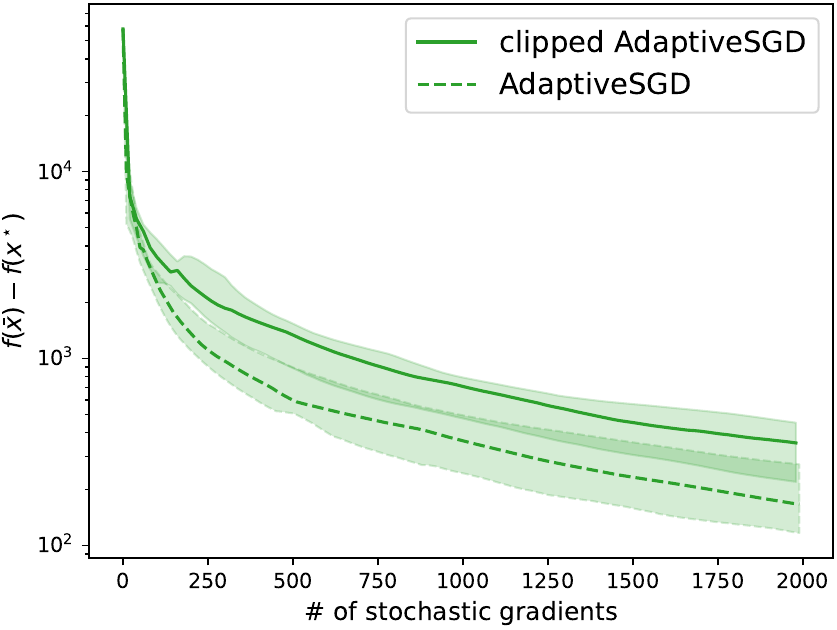} \label{fig:clip_vs_noclip_2_parkinsons}}
	\cprotect\caption{Sub-optimality of SGD variants as a function of the number of stochastic gradients used, when training a quartic loss linear regression model on the Parkinsons Telemonitoring dataset. We plot the median across 10 runs, with a shaded region showing the inter-quartile range.}
        \label{fig:figure1_parkinsons}
    \end{center}
    \vskip -0.2in
\end{figure}

\begin{figure}
    \begin{center}
	\subfloat[]{\includegraphics[width=0.32\textwidth]{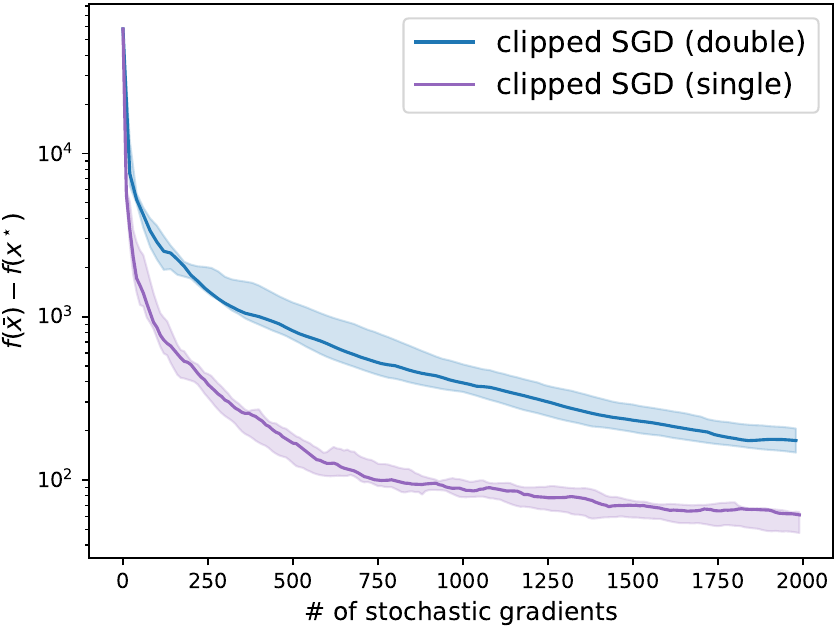} \label{fig:single_vs_double_1_parkinsons}}
	\hfill
	\subfloat[]{\includegraphics[width=0.32\textwidth]{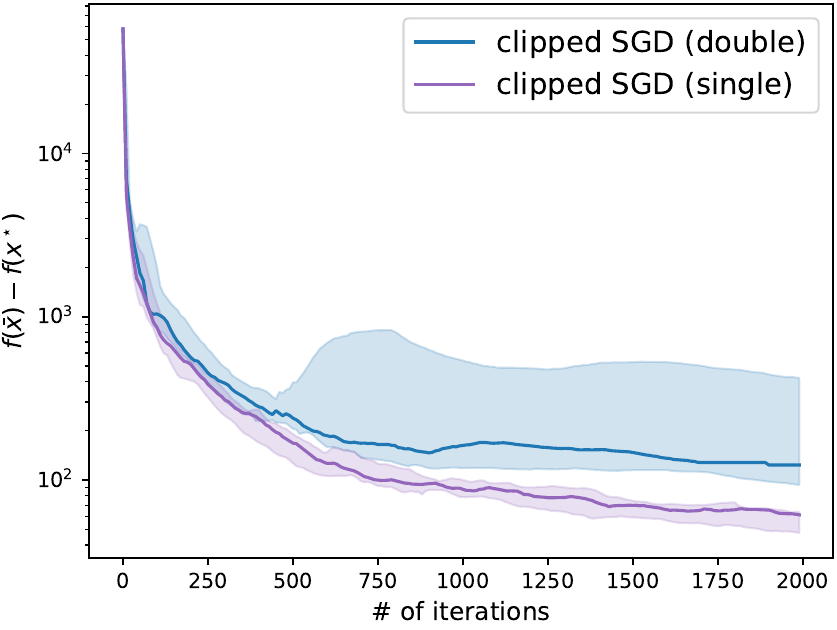} \label{fig:single_vs_double_2_parkinsons}}
	\hfill
	\subfloat[]{\includegraphics[width=0.32\textwidth]{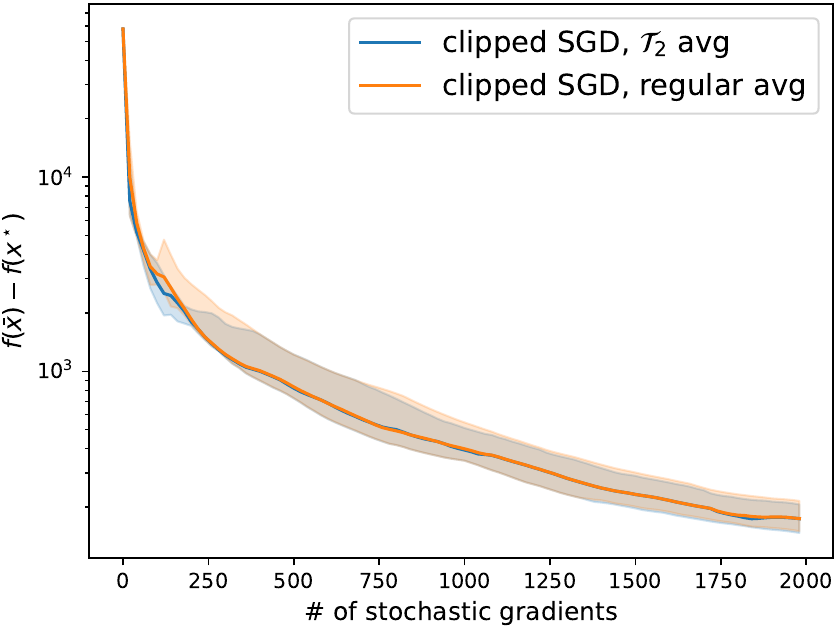} \label{fig:avg_parkinsons}}
	\cprotect\caption{Ablations of \Cref{alg:clippedSGDdouble}. Figures~\ref{fig:single_vs_double_1_parkinsons} and  \ref{fig:single_vs_double_2_parkinsons} compares single and double sampling by plotting sub-optimality as a function of gradient and iteration budget, respectively. Figure \ref{fig:avg_parkinsons} compares different averaging methods. We plot the median across 10 runs, with a shaded region showing the inter-quartile range.}
        \label{fig:figure2_parkinsons}
    \end{center}
    \vskip -0.2in
\end{figure}

\paragraph{Synthetic data.} 
We perform the same experiments, but instead of a regression-like objective, we use the function $f: \R^{20} \to \R$ given by $f(x) = \norm{Ax}^4$, where $A = \text{diag}(\nicefrac{1}{20}, \nicefrac{1}{19}, \ldots, 1)$. We use the stochastic gradient oracle $\mc{G}(x) = \grad f(x) + \xi$, where $\xi$ has iid Gaussian entries and $\sigma^2 = \Var \norm{\xi}^2 = 4 \cdot 10^3$. We set $T = 1000$ and $x_0 = 1.75 \cdot \vec{1}$, where $\vec{1}$ is a vector of all ones. For each tested method, we plot the median across 100 runs, with a shaded region showing the inter-quartile range. We define the stepsize of each method as $k \eta_t \alpha_t$: The values of $\alpha_t$ and $\eta_t$, unless stated otherwise, are set according to \Cref{tab:parameters}, leveraging our knowledge of the function $f$, the time $T$ and the noise norm variance $\sigma^2$. The value $k$ is a scalar parameter that we tune: We first perform an initial grid search over $(0.01, 0.1, 1, 10, 100)$. Denoting the best value as $x$, we then perform a second grid search over $\frac{1}{4}x, \frac{1}{2}x, x, 2x, 4x$. In both searches, we verify that the optimal value is never at the edge of the grid. 
The results are displayed in \Cref{fig:figure1_synthetic,fig:figure2_synthetic}. Here, too, there are only a few distinctions compared to the results in \Cref{sec:experiments}: In \Cref{fig:single_vs_double_2_synthetic}, using single sampling results in the same iteration complexity as using double sampling. In \Cref{fig:avg_synthetic}, the difference between the two averaging methods is substantial in favor of the method from \Cref{alg:clippedSGDdouble}.

\begin{figure}
    \vskip 0.2in
    \begin{center}
	\subfloat[]{\includegraphics[width=0.32\textwidth]{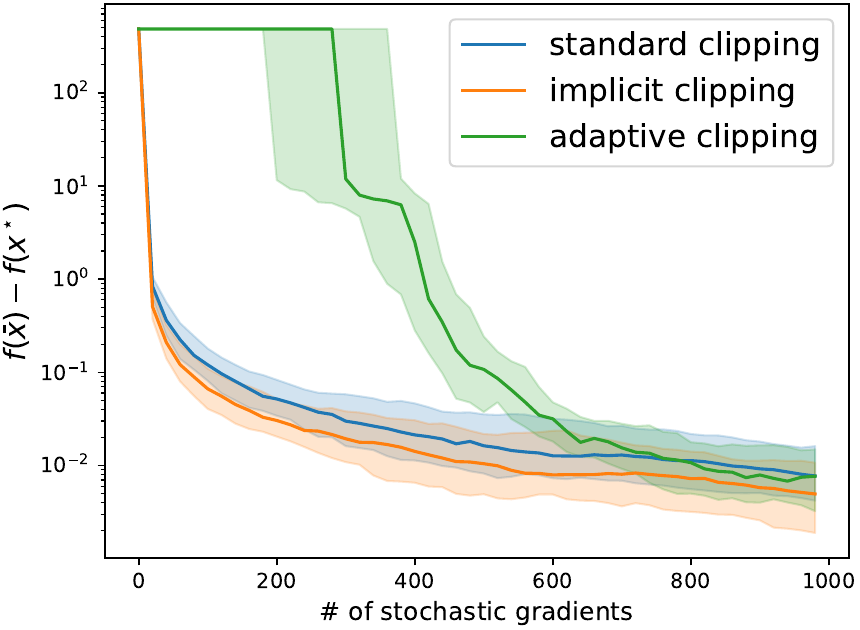} \label{fig:general_synthetic}}
	\hfill
	\subfloat[]{\includegraphics[width=0.32\textwidth]{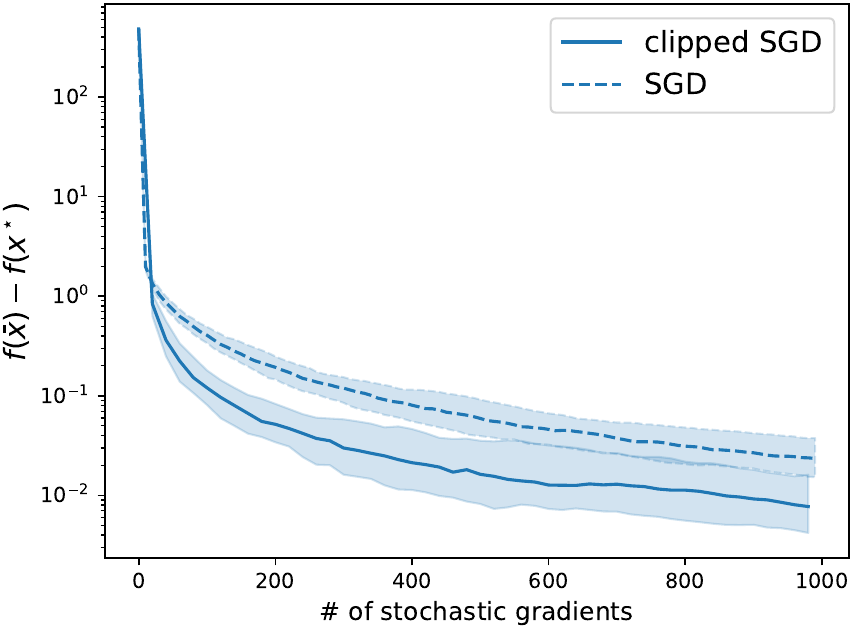} \label{fig:clip_vs_noclip_synthetic}}
	\hfill
	\subfloat[]{\includegraphics[width=0.32\textwidth]{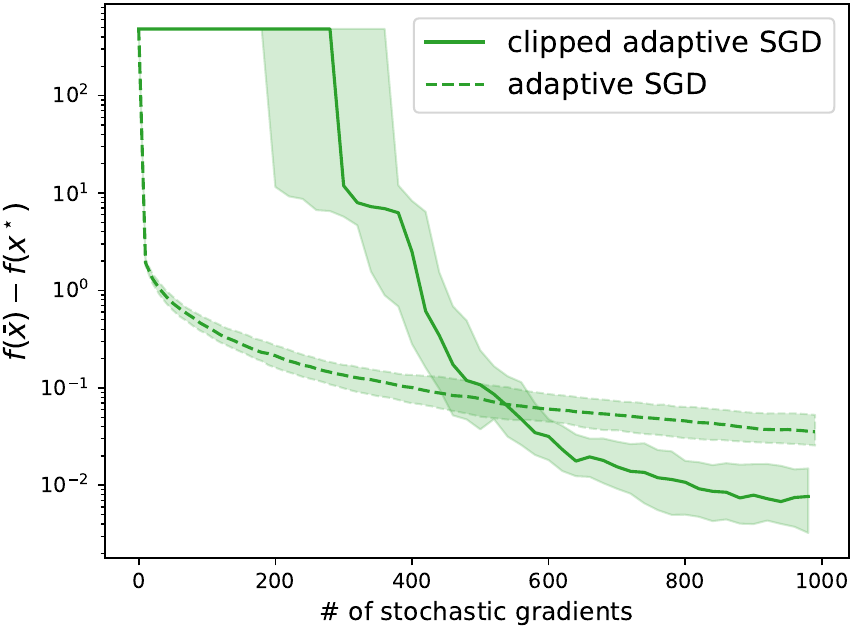} \label{fig:clip_vs_noclip_2_synthetic}}
	\cprotect\caption{Sub-optimality of SGD variants as a function of the number of stochastic gradients used, on the loss \mbox{$f(x) = \norm{Ax}^4$} with \textbf{synthetic noise}. We plot the median across 100 runs, with a shaded region showing the inter-quartile range.}
        \label{fig:figure1_synthetic}
    \end{center}
    \vskip -0.2in
\end{figure}

\begin{figure}
    \vskip 0.2in
    \begin{center}
	\subfloat[]{\includegraphics[width=0.32\textwidth]{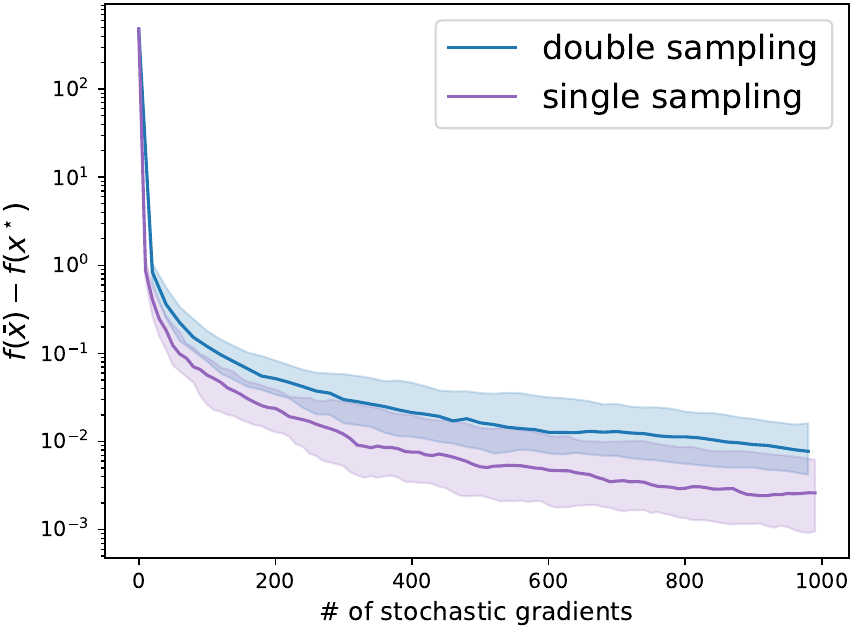} \label{fig:single_vs_double_1_synthetic}}
	\hfill
	\subfloat[]{\includegraphics[width=0.32\textwidth]{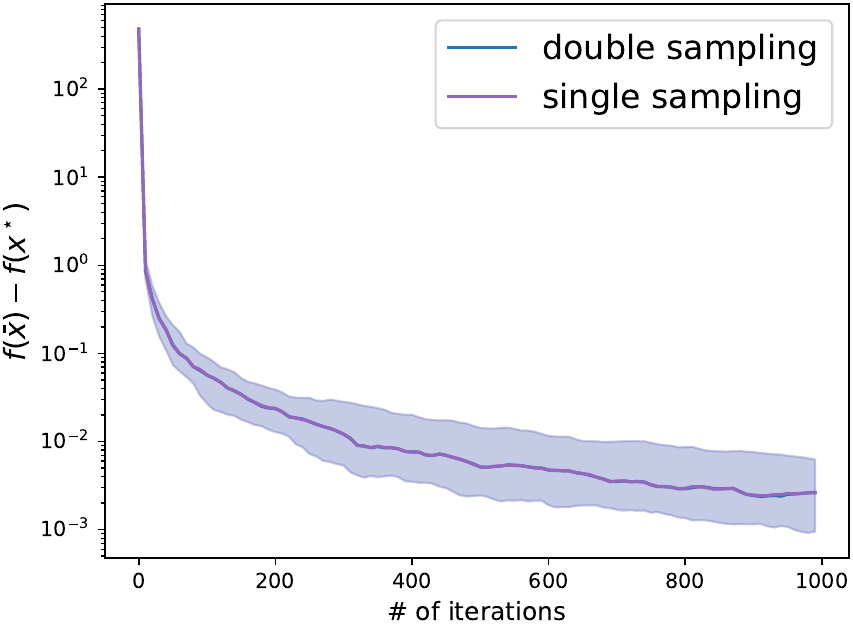} \label{fig:single_vs_double_2_synthetic}}
	\hfill
	\subfloat[]{\includegraphics[width=0.32\textwidth]{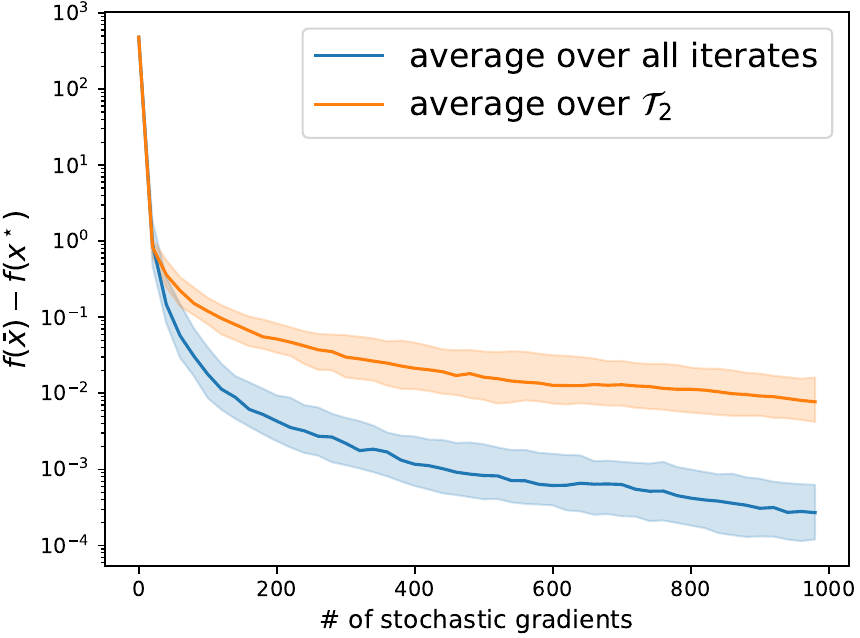} \label{fig:avg_synthetic}}
	\cprotect\caption{Ablations of \Cref{alg:clippedSGDdouble}. Figures~\ref{fig:single_vs_double_1_synthetic} and  \ref{fig:single_vs_double_2_synthetic} compares single and double sampling by plotting sub-optimality as a function of gradient and iteration budget, respectively. Figure \ref{fig:avg_synthetic} compares different averaging methods. We plot the median across 100 runs, with a shaded region showing the inter-quartile range.}
        \label{fig:figure2_synthetic}
    \end{center}
    \vskip -0.2in
\end{figure}

\end{document}